\newtheorem{theorem}{Theorem}
\newtheorem{assumption}{Assumption}
\newtheorem{claim}{Claim}
\newtheorem{corollary}{Corollary}
\newtheorem{lemma}{Lemma}
\newtheorem{proposition}{Proposition}
\newtheorem{remark}{Remark}
\numberwithin{equation}{section}
\newtheorem{thmx}{Theorem}
	\renewcommand{\thefootnote}{\arabic{footnote}}
\DeclareMathOperator{\Tr}{Tr}
\DeclareMathOperator{\diag}{diag}
\DeclareMathOperator{\spn}{span}
\newcommand{\calA}{\ensuremath{\mathcal{A}}}
\newcommand{\calB}{\ensuremath{\mathcal{B}}}
\newcommand{\calH}{\ensuremath{\mathcal{H}}}
\newcommand{\calG}{\ensuremath{\mathcal{G}}}
\newcommand{\calI}{\ensuremath{\mathcal{I}}}
\newcommand{\calN}{\ensuremath{\mathcal{N}}}
\newcommand{\calT}{\ensuremath{\mathcal{T}}}
\newcommand{\calV}{\ensuremath{\mathcal{V}}}
\newcommand{\calE}{\ensuremath{\mathcal{E}}}
\newcommand{\calL}{\ensuremath{\mathcal{L}}}
\newcommand{\norm}[1]{\left\|{#1}\right\|}
\newcommand{\abs}[1]{\left|{#1}\right|}
\newcommand{\set}[1]{\left\{{#1}\right\}}
\newcommand{\est}[1]{\hat{#1}}
\newcommand{\expec}{\ensuremath{\mathbb{E}}}
\newcommand{\matR}{\ensuremath{\mathbb{R}}}
\newcommand{\argmin}[1]{\underset{#1}{\operatorname{argmin}}}
\newcommand{\prob}{\ensuremath{\mathbb{P}}}
\newcommand{\logit}{\operatorname{logit}}
\newcommand{\red}[1]{\textcolor{black}{#1}}
\newcommand{\ones}{\ensuremath{\mathbf{1}}} 
\newcommand{\complincmat}{\ensuremath{C}}
\newcommand{\Ldag}{L^\dagger(\lambda)}
\newcommand{\Lcom}{L_{\operatorname{com}}}
\newcommand{\lamin}{\lambda_{\operatorname{min}}}
\newcommand{\okap}{\overline{\kappa}}
\newcommand{\lfreqsp}{\calV_{\tau}}
\newcommand{\hfreqsp}{\calV^{\perp}_{\tau}}
\newcommand{\lfreqproj}{P_{\calV_{\tau}}}
\newcommand{\hfreqproj}{P_{\calV^{\perp}_{\tau}}}
\newcommand{\rev}[1]{\textcolor{black}{#1}}
\begin{document}
\title{Dynamic Ranking and Translation Synchronization}

\author{Ernesto Araya\footnotemark[1]\\ \texttt{ernesto-javier.araya-valdivia@inria.fr} \and Eglantine Karl\'e \footnotemark[1]\\ \texttt{eglantine.karle@inria.fr} \and Hemant Tyagi \footnotemark[1]\\ \texttt{hemant.tyagi@inria.fr}}

\renewcommand{\thefootnote}{\fnsymbol{footnote}}
\footnotetext[1]{Inria, Univ. Lille, CNRS, UMR 8524 - Laboratoire Paul Painlev\'{e}, F-59000 \\
\indent Authors are written in alphabetical order.}
\renewcommand{\thefootnote}{\arabic{footnote}}

\maketitle

\begin{abstract}
In many applications, such as sport tournaments or recommendation systems, we have at our disposal data consisting of pairwise comparisons between a set of $n$ items (or players). The objective is to use this data to infer the latent strength of each item and/or their ranking. Existing results for this problem predominantly focus on the setting consisting of a single comparison graph $G$. However, there exist scenarios (e.g., sports tournaments) where the pairwise comparison data evolves with time. Theoretical results for this dynamic setting are relatively limited, and \rev{are} the focus of this paper.

We study an extension of the \emph{translation synchronization} problem, to the dynamic setting. In this setup, we are given a sequence of comparison graphs $(G_t)_{t\in \calT}$, where $\calT \subset [0,1]$ is a grid representing the time domain, and for each item $i$ and time $t\in \calT$ there is an associated unknown strength parameter $z^*_{t,i}\in \matR$. We aim to recover, for $t\in\calT$, the strength vector $z^*_t=(z^*_{t,1},\dots,z^*_{t,n})$ from noisy measurements of $z^*_{t,i}-z^*_{t,j}$, where $\set{i,j}$ is an edge in $G_t$. Assuming that $z^*_t$ evolves \emph{smoothly} in $t$, we propose two estimators -- one based on a smoothness-penalized least squares approach and the other based on projection onto the low frequency eigenspace of a suitable smoothness operator. For both estimators, we provide finite sample bounds for the $\ell_2$ estimation error under the assumption that $G_t$ is connected for all $t\in \calT$, thus proving the consistency of the proposed methods in terms of the grid size $|\calT|$.
We complement our theoretical findings with experiments on synthetic and real data.

\end{abstract}

% Introduction
%------------------
% Introduction
%
\section{Introduction} \label{sec:intro}
Ranking problems are ubiquitous in statistics and have a wide range of applications, e.g.,  recommendation systems \cite{jannach2016recommender}, sports and gaming tournaments \cite{cattelan2013dynamic,bong2020nonparametric}, biological systems \cite{carpenter2006cellprofiler,shah2017simple} and information retrieval \cite{cao2007infoRet}. In these problems, we are given a set of items (or players) and data that typically consists of outcomes of pairwise comparisons between the items.  For instance, in movie recommender systems,
the typical data sets contain information about the users' preference for one movie over another, by means of a rating score. In sports tournaments, we are given win/loss information between pairs of teams. The aim then is to recover a ranking of the items given a collection of such pairwise comparison outcomes. This is often achieved by
estimating a (latent) score for each item, which measures its inherent strength or quality. 

From the modeling viewpoint, it is natural to represent the set of pairs of items to be compared as a simple undirected graph $G = ([n], \calE)$ where each item is assigned a label in $\{1,\dots, n\}$ and an edge $\{i, j\} \in \calE$
exists iff $i$ and $j$ are compared in the data generating process. In sports and gaming applications, for example, we can think of $G$ as the tournament design, representing the information of who plays who. A necessary condition for recovering the underlying ranking is that the graph $G$ is connected. Otherwise, there
will exist at least two connected components of $G$ and it is then impossible to produce a full ranking since no comparison information is available for items 
in one component with respect to the other.

Let us now describe a relatively simple yet popular model for ranking which is at the heart of what we study in this paper. Denote $z^*=(z^*_1,\dots,z^*_n)^\top \in \matR^n$ to be an unknown vector representing the latent strengths of the items. For each $\set{i,j} \in \calE$, we are given a noisy measurement  
\begin{equation}\label{eq:intro_tran_sync}
    y_{ij}=z^*_i-z^*_j+\epsilon_{ij},
\end{equation}
where $\epsilon$ denotes noise. The goal is to recover the unknown signal $z^*$ given the data $(y_{ij})_{\set{i,j}\in\calE}$. The model in \eqref{eq:intro_tran_sync} is referred to as \emph{translation synchronization} (TranSync) \cite{huang2017translation} in the literature and has received attention in both theory \cite{Cucuringu2016rankSync,ranksync21} and applications. %It is worth mentioning that it also has close connections with the problem of \emph{group synchronization} \cite{Singer2011angular}.
It is clear that recovering $z^*$ up to a global shift is the best we can hope for. In addition, in the noiseless case, we can exactly recover $z^*$ if and only if $G$ is connected, by considering the following procedure. Take a spanning tree of $G$ and fix any value for the root node, then traverse the tree while unwrapping the information for each node of tree (by adding the offsets $z^*_i-z^*_j$). 
The following are the two main types of applications of the translation synchronization model. 
\begin{itemize}
    \item \emph{Time synchronization of networks.} This problem arises in engineering in the context of time synchronization of distributed networks \cite{Karp2003clock,Giridhar2006DistributedCS}. Here, we are given noisy measurements of the time offset $z^*_i-z^*_j$, where $z^*_i$ represents the time measured by a clock at node $i$. The goal is to recover the time vector $z^*$ from the noisy measurements. 
    
    \item \emph{Ranking.} Here, $z^*_i$ represents the strength/quality of item $i$, and the goal is to produce a ranking by the estimation of $z^*$ from the data $(y_{ij})_{\set{i,j}\in\calE}$. For instance, in movie rating systems, $y_{ij}$ is formed by taking the average of the score differences of movies attributed by individual users \cite{Gleich2011rankAggreg}. There is an increasing literature on the theoretical side along with applications of translation synchronization to ranking problems \cite{Gleich2011rankAggreg,Cucuringu2016rankSync,ranksync21}. Although the model presented in \eqref{eq:intro_tran_sync} is linear, it has been shown that some nonlinear models, such as the popular Bradley-Terry-Luce (BTL) model, can be approximated in the form \eqref{eq:intro_tran_sync} by considering an appropriate transformation \cite{Hendrickx2019,hendrickx2020minimax} (see Remark \ref{rem:btl_setup}). 
    %Also, considering that $z^*_i$ is the logarithm of strength of item $i$ allow us to use the translation synchronization model for data given as the ratio of strengths \cite{Gleich2011rankAggreg}. 
\end{itemize}

\subsection{Dynamic TranSync model}
%
%We will present a dynamic version of the TranSync model \eqref{eq:intro_tran_sync} to study the problem of ranking items, when their strength parameters and the comparison graph can evolve in time. 

\paragraph{Dynamic setting for ranking.} We will refer to the setup \eqref{eq:intro_tran_sync} as the `static' case since a single comparison graph -- which does not evolve with time -- is considered, and the same is true for the strength parameter $z^*$. However, in many applications, it is reasonable to assume that the latent strengths of the items will change over time and so do the comparison graphs. For instance, in a sport tournament, different teams or players will meet at different times and their overall strength (or abilities) can either improve (by learning new skills, for example) or deteriorate (\rev{due} to injuries, lack of motivation, etc). Similarly, in movie recommendation systems, the evaluation from the general public that a particular film receives is likely to evolve according to time-dependent factors (date of release, change in viewer's taste  etc). The dynamic setting extends the static one by considering a sequence of comparison graphs $G_t=([n],\calE_t)$, where the sub-index $t$ represents the time evolution. Moreover, the latent strength parameter of each item will be a function of $t$. %Examples motivating this setting are the evolution of users' preferences in recommendation systems and the level of a player(or a team) in sports and gaming tournaments. 

\paragraph{Dynamic TranSync model.} 
In this paper, we will focus on a dynamic version of the TranSync model \cite{huang2017translation}. Denoting $\calT$ to be a uniform grid\footnote{\rev{This is only for ease of exposition, the results hold for any discrete set of size $T+1$.}} of $[0,1]$ of size $T+1$, we observe at each time $t \in \calT$   
\begin{equation}\label{eq:transync_intro}
y_{ij}(t) = z^*_{t,i}-z^*_{t,j} + \epsilon_{ij}(t), \quad \set{i,j} \in \calE_t.
\end{equation}
Here $z^*_{t,i},z^*_{t,j}$ are the strength parameters of item $i$ and $j$ respectively at time $t$, and $\epsilon_{ij}(t)$ represents the noise in the data. 
%In other words, the observed outcome of the comparison between $i$ and $j$ is a noisy measurement of the difference of their latent strengths. 
Our objective is to recover the vector $z^*_t = (z^*_{t,1},\dots,z^*_{t,n})^{\top}$ up to an additive shift via the estimate $\est{z}_t$, from the observations $y_{ij}(t)$. In particular, we would like to establish consistency w.r.t $T$ (with $n$ fixed) in the sense that the mean squared error (MSE)
\begin{equation} \label{eq:mse_consis}
\frac{1}{\abs{\calT}} \sum_{t \in \calT} \norm{\est{z}_t - z^*_t}_2^2 \overset{T \rightarrow \infty}{\longrightarrow} 0.
\end{equation}
The following remarks are in order.
\begin{itemize}
\item If no additional assumption is made on the time-evolution of the strengths, then the strategy of applying existing `static' case methods for each $t$ is the best one possible. However in this case, we will typically have $(1/\abs{\calT}) \sum_{t \in \calT} \norm{\est{z}_t - z^*_t}_2^2  = \Omega(1)$
as $T \rightarrow \infty$.  
%To improve upon this case, we assume a time smoothness constraint which implies that the latent strengths $z^*_{t,i}$ do not change too rapidly in time. 

\item The individual comparison graphs $G_t$ can be very sparse, and possibly disconnected for some (or even all) $t \in \calT$. Thus, using only the information provided by $G_t$ for inferring $z^*_t$ is doomed to fail.
\end{itemize}
Yet in practice, it is reasonable to assume that the strength vector $z^*_t$ evolves \emph{smoothly} with time. This is also the setup considered in this paper and is discussed formally in Section \ref{sec:model}, see Assumption \ref{assump:smooth}. The smoothness constraints help us in the estimation of $z^*_t$ by leveraging pairwise data at different time instants, thus allowing us to establish the consistency condition in \eqref{eq:mse_consis}. 
%\hemant{perhaps add more on this: individual graphs can be very sparse, possibly disconnected. Even if they are connected, having a smoothness assumption allows for sharing of information }

\subsection{Related work} \label{subsec:relwork}
We now discuss works that are closely related to the problem setup considered in this paper.
\paragraph{Static ranking models.}  The BTL model, introduced in \cite{bradley1952rank,luce2012individual}, is arguably the most well-studied model for ranking problems. This is a parametric model where the strength of the $i$-th item is denoted by $w^*_i>0$, and for each $\set{i,j} \in \calE$ we observe a binary value $y_{ij}$, which takes the value $1$ if $j$ beats $i$ and $0$ otherwise. Moreover, it is assumed that the variables $(y_{ij})_{\set{i,j}\in\calE}$ are independent and distributed as Bernoulli random variables with parameters $\frac{w^*_i}{w^*_i+w^*_j}$. Numerous methods have been proposed to analyze the model theoretically with the goal of estimating the latent strengths. Perhaps the two methods that have received the most attention are the maximum likelihood estimator (MLE) \cite{bradley1952rank,negahban2015rank}, and the Rank Centrality method introduced by Neghaban et al. \cite{negahban2015rank}. Both these methods have been analyzed for general graph topologies (eg., \cite{negahban2015rank}) and for the special case of Erdös-Renyi graphs (eg., \cite{Chen_2019,chen2020partial}). As mentioned earlier, the BTL model (which is nonlinear) can be approximated by the TransSync model \eqref{eq:intro_tran_sync} by using the transformation $z^*_i=\log{w^*_i}$ for each item $i$, along with some additional technical considerations (see Remark \ref{rem:btl_setup} for details).  Other alternative models that have been thoroughly studied for the ranking problem are the Thurstone model \cite{thurstone1927law}, models based on the strong stochastic transitivity (SST) property \cite{Chatterjee2015matrixEst,shah2016stochastically,pananjady2017worst} and noisy sorting \cite{Mao2018Noisy}, although for the latter the objective is to recover a ground truth permutation (which is the ranking) rather than a vector of strengths.  

\paragraph{Dynamic ranking.} 
The dynamic setting has been relatively less studied than its static counterpart, at least from a theoretical perspective. 
\begin{itemize}
\item Recently, \cite{karle2021dynamic, bong2020nonparametric} considered a dynamic BTL model where the parameters of the model evolve in a Lipschitz smooth manner. For this setting, pointwise consistency rates were derived for estimating the strength vectors at any given time $t \in [0,1]$ - this was shown for a nearest neighbor based rank centrality method in \cite{karle2021dynamic}, and for a maximum likelihood based approach in \cite{bong2020nonparametric}. Unlike the local smoothness assumptions made in these works, we consider a global smoothness assumption, akin to the quadratic variation of a function over a grid. 

\item Another study of time-dependent pairwise comparisons was proposed in \cite{li2021recovery}. Their model posits that comparisons depend on various factors captured in the matrices $S_k$ and $Q$, where $S_k$ gathers the time-dependent data and $Q$ is a latent feature matrix. More precisely, the pairwise comparison matrix at time $k$, namely $X(k)$, is given by
\[X(k) = S_kQ^\top - QS_k^\top \quad \forall k\in[T], \]
and the matrix $S_k$ is assumed to evolve smoothly with time under the following model
\[S_k = S_{k-1}+E_k \quad \forall k \in [T]. \]
Here $E_k$ is a matrix formed of i.i.d. centered Gaussian variables.  The goal is to recover $X(T)$, given noisy linear measurements of $X(k)$. Note that in the case where $S_k$ is a vector of strengths and $Q$ is the all ones vector, the comparisons $X_{ij}(k)$ are noisy measurements of strength differences, which is similar to the Dynamic TranSync model. However, the smoothness assumption is local, unlike the global smoothness case studied in our work. \rev{This local assumption generally holds in practice, but fails under some real-life events such as injuries for athletes, that can cause jumps in the strengths' smoothness. The global assumption used in this work handles such possibilities.}

%\item \eglantine{Li and Wakin \cite{li2021recovery} have also introduced a model aiming at recovering a pairwise comparison matrix $X(T)$ using noisy linear measurements of matrices $X(t)$ at previous times $t\leq T$. Their model posits that the comparisons over a collection of items depend on several latent factors, captured by matrices $S_t$ and $Q$. The model is the following,
%\[X(t) = S_tQ^\top - QS_t^\top \quad \forall t\in[T]. \]
%Here, the time-dependent factors are represented by the matrix $S_t$. Note that if $Q$ is the all ones vector, this model is similar to the TranSync model as observations are noisy measurements of strength differences between pairs of items. However in this model, the smoothness assumption is not global as done in our work but assumes that the matrix $S_t$ follows a generative model,
%\[S_t = S_{t-1}+E_t \quad \forall t \in [T], \]
%where $E_t$ is a matrix of i.i.d. centered gaussian variables.}
\end{itemize}
Apart from these works, some theoretical results have been provided for a state-space generalization of the BTL model \cite{fahrmeir1994dynamic,glickman1998state} and in a Bayesian framework \cite{glickman1998state,lopez2018often,maystre2019pairwise}.
 Other existing works have been devoted mostly to applications in the context of sport tournaments \cite{cattelan2013dynamic,jabin2015continuous,motegi2012network}. %We highlight that the model and the estimators proposed here are related but different to those that have been studied in the previous dynamic ranking literature. %\textbf{add citations.} 

\paragraph{Group synchronization.}  The model \eqref{eq:intro_tran_sync} can be extended to the case where each item $i$ is associated to $g_i$ which is an element of a group $(\calG,\star)$, and the data available consists of noisy measurements of $g_i\star g_j^{-1}$, where $\star$ is the group's operation.
%A chief example on this setting is the problem of synchronization
%over the special orthogonal group $\calG=SO(d)$, which consists of estimating $n$ unknown $d\times d$ matrices $R_1,\cdots, R_n\in
%SO(d)$ from noisy measurements of a subset of the pairwise ratios $R_iR_j^{-1}$ (here $\star$ corresponds to matrix multiplication). 
For instance  $\calG=SO(2)$ is the group of planar rotations, for which the corresponding problem is known as angular synchronization. Here, one is asked to
estimate $n$ unknown angles $\theta_1,\cdots,\theta_n\in [0,2\pi)$ given noisy measurements of their offsets $\theta_i-\theta_j\text{ mod } 2\pi$. This problem can be formulated as an optimization problem involving the maximization of a quadratic function over a non-convex set and is NP-hard. Singer introduced spectral and semi-definite programming (SDP) relaxations \cite{Singer2011angular} which work well empirically, and also come with statistical guarantees \cite{Bandeira2017}. The group synchronization problem has also been studied from a theoretical perspective for other groups such as $SO(d)$ and $\mathbb{Z}_2$ (e.g., \cite{Ling20, mei17a}).

\paragraph{Signal denoising on graphs and smoothness constraints.}
In terms of the smoothness constraints and the proposed estimators, the literature of signal denoising in graphs is perhaps the closest to our work. In particular, we draw inspiration from \cite{sadhanala2016total}, where the authors consider a single graph $G=([n],\calE)$, a ground truth signal $x^*\in \matR^n$ and study the problem of estimating $x^*$ under the observation model 
\begin{equation} \label{eq:sig_den}
y=x^*+\epsilon,
\end{equation}
where $\epsilon$ is centered random noise. Denoting $L$ to be the \rev{unnormalized} Laplacian matrix of $G$, it is assumed that the quadratic variation of $x^*$ (i.e.,  $x^{*T}Lx^*$) is not large which means that the signal does not change quickly between neighboring vertices. This is equivalent to saying that $x^*$ lies close to the subspace spanned by the eigenvectors corresponding to the small eigenvalues of $L$ (i.e., the 
`low frequency' part of $L$). In \cite{sadhanala2016total}, the authors show for sufficiently smooth $x^*$ and with $G$ assumed to be the grid graph that linear estimators such as \emph{Laplacian smoothing} and \emph{Laplacian eigenmaps} attain the minimax rate for estimating $x^*$ in the $\ell_2$ norm. There are two main differences between this problem setting and ours. Firstly, the observations $y_{ij}(t)$ in our setup (defined in \eqref{eq:transync_intro}) are supported on the edges rather than on the vertices of a graph. Secondly, we impose a smoothness constraint based on time and not on vertex vicinity on $G_t$, which is more adapted to the strength evolution in the dynamic ranking problem. Nevertheless, the estimators we propose (see Section \ref{sec:model}) are essentially adaptations of the Laplacian smoothing and Laplacian eigenmaps estimators to our setup.
%For that, we consider restrictions which are bounds on a quantity of the form $\|Ez^*\|^2$, where $E$ is a time-smoothing operator that cannot be expressed as a Laplacian matrix.

\subsection{Contributions} 
In this paper, we make the following main contributions. 
\begin{itemize}
\item We consider a dynamic version of the TranSync model, where the latent strength parameters evolve smoothly with time. The smoothness assumption we work under is global in nature, as opposed to local assumptions previously considered in the literature \cite{karle2021dynamic,bong2020nonparametric}. We propose two estimators for this model, one based on a smoothness-penalized least squares approach and the other based on a projection method, which can be considered as extensions of the Laplacian smoothing and Laplacian eigenmaps estimators \cite{sadhanala2016total}, respectively.

\item We provide theoretical guarantees for the proposed estimators, deriving high probability bounds for the global $\ell_2$ estimation error. In particular, assuming $\calT \subset [0,1]$ is a uniform grid ($\abs{\calT} = T+1$) and the comparison graphs $(G_{t'})_{t'\in\calT}$ are connected, we show that under mild smoothness assumptions (see Theorems \ref{thm:error_ls_evol_graph_2} and \ref{thm:error_proj_evol}), the estimate $\est{z}_t$ returned by either of our estimators satisfy \eqref{eq:mse_consis}. %$$\frac{1}{\abs{\calT}} \sum_{t \in \calT} \norm{\est{z}_t - z^*_t}_2^2 \overset{T \rightarrow \infty}{\longrightarrow} 0.$$ 
The connectivity assumption on each individual graph is due to technical reasons for carrying out the analysis, but as discussed later, we believe these results can be extended to handle the general setting where a fraction of the comparison graphs are disconnected.  
%
%for $T$ large enough and under a smoothness assumption that implies that $z^*_{t,i}$ are Lipschitz functions, the mean square error rates are of the order $O(T^{-2/3})$ w.h.p. This rate matches the classical estimation error rate for Lipschitz functions for the squared $L_2$-norm.

\item Finally, we provide numerical experiments that complement our theoretical results. We perform extensive experiments with synthetic data for the dynamic versions of the TranSync and BTL models that demonstrate that our estimators perform well even in very sparse regimes where individual graphs can be possibly disconnected. In addition, we evaluate our methods on two real data sets -- the Netflix Prize data set \cite{jiang2011statistical} and Premier League results from season $2000/2001$ to season $2017/2018$ \cite{ranksync21} -- and show that there is a benefit in using temporal information for estimation.
 
\end{itemize}
\paragraph{Outline of the paper.} The remainder of the paper is organized as follows. We present the setup of the dynamic TranSync model in Section \ref{sec:model} and the proposed penalized least squares and projection estimators in Section \ref{sec:estimators}. We introduce our main theoretical results, Theorems \ref{thm:main_result_smooth} and \ref{thm:main_result_proj}, in an informal manner in Section \ref{sec:main_results}. In Sections \ref{sec:analysis_smooth_pen} and \ref{sec:analysis_proj} we present the analysis of the proposed estimators, including the precise statements of our main results and we give their proofs in Section \ref{sec:proofs}. Section \ref{sec:experiments} contains experiments on synthetic and real data sets. Finally, we summarize our findings in Section \ref{sec:conclusion} and give prospective research directions. 

% Problem Setup
%-------------------
% Problem setup
%-------------------
\section{Problem setup and algorithm} \label{prob_setup}

\paragraph{Notation.} For a matrix $A$, we denote $\norm{A}_2$ to be the spectral norm, i.e., the largest singular value, and  $A^\dagger$ to be its Moose-Penrose pseudo-inverse. If $A \in \matR^{n \times n}$ is symmetric, its eigenvalues are ordered such that $\lambda_n(A) \leq \cdots \leq \lambda_1(A)$. Given two symmetric matrices $A,B$, the Loewner ordering $A \preccurlyeq B$ means that $B-A$ is positive semi-definite. The symbol $\otimes$ is used for denoting the usual Kronecker product of two matrices. For real numbers $a,b$, we denote $a\wedge b := \min(a,b)$ and $a\vee b := \max(a,b)$. For $a,b \geq 0$, we write $a \lesssim b$ if there exists a constant $c > 0$ such that $a \leq c b$. Moreover, we write $a\asymp b$ if $a \lesssim b$ and $b \lesssim a$. Finally, $\ones_n$ denotes the all one's vector of size $n$, and $I_{n}$ denotes the $n \times n$ identity matrix.

\subsection{The Dynamic TranSync model}\label{sec:model}
Let us introduce formally our model for dynamic pairwise comparisons, inspired by the TranSync model \cite{huang2017translation}.
Our data consists of pairwise comparisons on a set of items $[n] = \set{1,2,\dots,n}$ at different times $t \in \calT = \set{\frac{k}{T} | \, k=0,\dots,T}$, where $\calT$ is a uniform grid on the interval $[0,1]$. At each time $t \in \calT$, we denote the observed comparison graph $G_t = ([n],\calE_t)$ where $\calE_t$ is the set of undirected edges. It will be useful to denote $\vv{\calE_t} = \{ (i,j) | \, \{i,j\} \in \calE_t, \, i<j \}$ as the corresponding set of directed edges.  We assume that the set of items $[n]$ is the same throughout, but the set of compared items $\calE_t$ can change with time. 

To model our data, we use the TranSync model at each time $t$ which posits that the outcome of a comparison between two items is solely determined by their strengths. The strengths of the items at time $t$ are represented by the vector $z^*_t = (z^*_{t,1}, \dots, z^*_{t,n})^\top \in \matR^n$. For each $t \in \calT$ and for every pair of items  $\set{i,j} \in \calE_t$, we obtain a noisy measurement of the strength difference $z^*_{t,i}-z^*_{t,j}$ 
\begin{equation}\label{eq:transync_model}
y_{ij}(t) = z^*_{t,i}-z^*_{t,j} + \epsilon_{ij}(t),
\end{equation}
where $\epsilon_{ij}(t)$ are i.i.d. centered subgaussian random variables \red{with $\psi_2$ norm  $\norm{\epsilon_{ij}(t)}^2_{\psi_2} = \sigma^2$ \cite{vershynin2010introduction}}. Let us  denote $x^*(t) \in \matR^{\abs{\calE_t}}$ where
%
%We will denote $X^*(t) = z^*(t)\ones^\top - \ones z^*(t)^\top$, the skew-symmetric matrix of rank $2$, which coefficients are the true strength differences. Indeed for all $i,j \in [n]$,
%
\begin{equation*}
    x^*_{ij}(t) = z^*_{t,i}-z^*_{t,j}, \quad \set{i,j} \in \calE_t
\end{equation*}
%%
%Note that given $X^*(t)$, the strength $z^*_t$ is only identifiable up to a shift by a constant vector. Hence, the vector of strengths $z^*$ obtained by stacking  $z^*_0, \dots, z^*_T$, is identifiable up to a shift by a constant vector on each block. 
%In the following, we will denote $x^*(t)$ the vectors of length $|\calE_t|$ containing the values $X^*_{ij}(t)$ 
and $x^*$ to be formed by column-wise stacking as 
\begin{equation*}
    x^* = \begin{pmatrix}
x^*(0)  \\
x^*(1) \\
\vdots \\
x^*(T)
\end{pmatrix}.
\end{equation*}
\begin{remark}[BTL model]\label{rem:btl_setup}
    A well-studied model in ranking problems is the BTL model \cite{bradley1952rank}, and it has recently been extended to a dynamic setting \cite{bong2020nonparametric,karle2021dynamic}. It posits that at each time $t$, the probability that an item is preferred to another item only depends on their strengths. If $w^*_{t,i}$ is the strength of item $i$ at time $t$, the quantity $\frac{w^*_{t,i}}{w^*_{t,i}+w^*_{t,j}}$ represents the probability that $i$ beats $j$ at time $t$.  More precisely, considering $z^*_{t,i}=\ln{w^*_{t,i}}$, it states that
$$\logit\left(\prob(i \text{ preferred at }j\text{ at time }t)\right) = z^*_{t,i} - z^*_{t,j},$$
where $\logit(x) = \ln\frac{x}{1-x}$ for $x\in (0,1)$. At each time $t\in\calT$ and for each pair $\set{i,j}\in\calE_t$, the observations are $L$ Bernoulli random variables with parameter equal to the probability that $i$ is preferred to $j$ at time $t$. Then, taking $\tilde y_{ij}(t)$ as the mean of all the observations corresponding to this triplet $(t,i,j)$, the fraction $R_{ij}(t) := \frac{\tilde y_{ij}(t)}{\tilde y_{ji}(t)}$ can be viewed as a noisy measurement of $\frac{ w^*_{t,i}}{ w^*_{t,j}}$. Indeed, it was shown in \cite{hendrickx2020minimax} for the analogous static case (we drop $t$ from the notation) with $R_{ij} := \frac{\tilde y_{ij}}{\tilde y_{ji}}$ that  
\begin{equation}\label{eq:BTL_Taylor}
    \ln{R_{ij}} = \ln{w^*_i}-\ln{w^*_j}+\tilde{\epsilon}(w^*_i,w^*_j,y_{ij})
\end{equation}
%
%\hemant{Something needs to be corrected here regarding the Taylor expansion stuff. Also, the logit function is defined using $\ln$, but is this how it has been implemented in the experiments?} 
\red{where the RHS of \eqref{eq:BTL_Taylor}  consists of terms in the Taylor expansion of the $\ln$ function.} 
%and then $\tilde{\epsilon}$ denotes the terms of highest order.} 
Note that the noise in the observations (\red{captured by $\tilde{\epsilon}$ in \eqref{eq:BTL_Taylor}}) is no longer zero-mean, contrary to the Dynamic TranSync model, which can introduce some bias in the estimation.
\end{remark}
\begin{remark}[\rev{Outliers model}]
    \rev{Another well-known model in group synchronization is the outliers model \cite{Singer2011angular}. It posits that for any pair of compared items $\set{i,j}$, we observe either the true strength difference or random noise. In our setting, one can introduce the following analogous version of this model,
    \[ y_{ij}(t) = (z^*_{t,i}-z^*_{t,j})X_{ij}(t) + (1-X_{ij}(t))\epsilon_{ij}(t)\]
    where $\epsilon_{ij}(t)\sim \calN(0,1)$ and $X_{ij}(t) \sim \calB(\eta)$, $\eta \in ]0,1[$, denotes the probability of observing the true strength difference, independent of $\epsilon_{ij}(t)$. Note that in this model, observations are not centered around $x^*(t)$, contrary to \eqref{eq:transync_model}. However, we can rewrite $y_{ij}(t)$ as 
    \[ {y}_{ij}(t) = y_{ij}(t)  - \expec[y_{ij}(t)]+\expec[y_{ij}(t)] = \eta x^*_{ij}(t) + \underbrace{(1-X_{ij}(t))\epsilon_{ij}(t)+ (X_{ij}(t) - \eta)x^*_{ij}(t)}_{0-\text{mean, subgaussian}}\]
    which is similar to \eqref{eq:transync_model}, but with an additional bias since $\expec[y_{ij}(t)] \neq x^*_{ij}(t)$. This suggests that the analysis of the outliers model could potentially be done following a similar strategy to ours, although the presence of the bias term would likely make the analysis more cumbersome. Note that $\eta$ is unknown, hence one can not use it directly in the estimation procedure.}
\end{remark}

Finally, we mention that since $t = k/T$ for an integer $0 \leq k \leq T$, we will often interchangeably use $t$ and $k$ for indexing purposes.
%
%
%\hemant{I have reserved the letter $k$ for indexing the grid points $\calT$. The letters $i,j$ will be reserved for indexing vertices. Also, I put the problem on a uniform grid of $[0,1]$ just to make notation and presentation consistent.}

\paragraph{Smooth evolution of weights.}  As discussed in the introduction, we will assume that the weights $z^*_t$ do not change, in an appropriate sense, too quickly with $t$. 
%As a sanity check, note that if $G_t$ was connected for each $t$, then we can estimate $z^*_t$ by running existing methods (which will just use the information at time $t$).  In case $G_t$ is not connected, we cannot hope to consistently estimate $z^*_t$ by only using information at time $t$. However, it is reasonable to expect that one can bypass this issue if the weights evolve in a ``smooth'' manner. Indeed, we can then think of leveraging the pairwise data at other time instants to estimate $z^*_t$. 
Since each $z^*_t$ is only identifiable up to a constant shift, the smoothness assumption that we impose needs to be invariant to such transformations. The assumption we make is as follows.
\begin{assumption}[Global $\ell_2$ smoothness] \label{assump:smooth}
Let $\complincmat \in \matR^{n \times {n \choose 2}}$ to be the edge incidence matrix of the complete graph $K_n$. We assume that
\begin{equation}\label{smooth_assump}
\sum_{k=0}^{T-1} \norm{\complincmat^\top (z^*_k -  z^*_{k+1})}^2_2 \leq S_T.
\end{equation}
\end{assumption}
The above assumption states that the vector $C^\top z^*_t \in \matR^{n \choose 2}$ does not change too quickly ``on average'', and is analogous to the usual notion of quadratic variation of a function. The regime $S_T = O(T)$ is uninteresting of course, what we are interested in is the situation where $S_T = o(T)$. Also note that \eqref{smooth_assump} is invariant to a constant shift of $z^*_t$, as desired. We will then aim to estimate the `block-centered' version of the vector $z^* \in \matR^{n(T+1)}$, where each block $z^*_t$  is shifted by an additive constant such that 
$$\frac{1}{n}\sum_{i=1}^n z^*_{t,i} = 0 \quad \forall \ t \in \calT.$$ 
In what follows, we will assume w.l.o.g that $z^*$ is block-centered. \red{Finally, denoting $M \in \matR^{(T+1) \times T}$ to be the incidence matrix of the path graph on $T+1$ vertices and $E = M^\top \otimes \complincmat^\top$, we can write \eqref{smooth_assump} as 
$$\sum_{k=0}^{T-1} \norm{\complincmat^\top (z^*_k-z^*_{k+1})}_2^2 = \norm{E z^*}_2^2 = z^{*^\top}E^\top Ez^* \leq S_T.$$
Hence $z^*$ lies close to the null space of $E^\top E$, i.e. $\calN(E^\top E)$, where this closeness is captured by $S_T$.}

\rev{\begin{remark}[Other smoothness assumptions and error guarantees]
As discussed earlier in Section \ref{subsec:relwork}, local smoothness assumptions have been studied in the literature for other related models, leading to recovery guarantees at a given time $t$ (see \cite{karle2021dynamic,bong2020nonparametric} for dynamic BTL model and also \cite{li2021recovery} for a non-transitive model with autoregressive noise). Here, we propose a weaker global smoothness assumption, which fits better some real-life data sets, as it allows for jumps in the smoothness. However, under a weaker global assumption, the error guarantees one can provide will also be of the global type, and relatively weaker compared to the local ones. In this work, we will bound the MSE for estimating $z^*$ (recall \eqref{eq:mse_consis}).
\end{remark}}

%
%Denoting $\est{z} \in \matR^{n(T+1)}$ as the estimate, which is also supposed to be block-centered, its quality will be measured by the criterion  
%
%$$d(\est{z}, z^*) := \norm{\est z - z^{*}}_2.$$
%\hemant{I dont want to call it a ``pseudo metric'' since it does not satisfy triangle inequality. Also, need to think on how this is related to the subspace angle as in \cite{hendrickx2020minimax, Hendrickx2019}}
%
%This estimation error should depend on the smoothness constraint $S_T$. As a sanity check, if $S_T=0$, meaning that the weights $z^*_t$ do not change with time, then we can recover exactly $z^*$ using existing method. Moreover, the quality of the estimation will be controlled by $S_T$, in the sense that if $S_T = o(T)$, then the error $d(\est z,z^*)$ will also evolve as $o(T)$.

\subsection{Smoothness-constrained estimators}
\label{sec:estimators}
In the Dynamic TranSync model \eqref{eq:transync_model}, observations are noisy measurements of strength differences with zero-mean noise. A classical approach for recovering the strength vector $z^*$ is to solve the following linear system
\begin{equation}\label{eq:unconstrained_ls}
    y_{ij}(t) = z_{t,j}-z_{t,i}, \quad \forall t \in \calT, \, \set{i,j} \in \calE_t 
\end{equation}
in the least-squares sense.
%
% \begin{remark}
% The solution of the unpenalized problem is given by the vector of solutions of each individual problem at a fix time $t$, stacked upon one another.
% \end{remark}
%
To reflect the temporal smoothness of the data, we need to take into account the smoothness constraints from Assumption \ref{assump:smooth}. We will consider two different approaches to incorporate these constraints in \eqref{eq:unconstrained_ls}.
\paragraph{Smoothness-penalized least squares.} A typical approach for incorporating constraints into a least squares problem involves adding them as a penalty term. Hence, an estimator $\est z$ of the strength vector is given as a solution of the following problem.
\begin{equation}\label{eq:pen_ls_estimator}
    \est z = \argmin{\substack{z_0, \dots, z_T \in \matR^n \\  z_k^\top \ones_n = 0}} \sum_{t \in \calT} \sum_{(i,j) \in \vv{\calE_t}}\left( y_{ij}(t) - (z_{t,i}-z_{t,j}) \right)^2 + \lambda \sum_{k=0}^{T-1} \norm{\complincmat^T(z_k - z_{k+1})}_2^2.
\end{equation}
The penalty term promotes smooth solutions and the estimate $\est{z} \in \matR^{n(T+1)}$ is formed by column stacking $\est{z}_0,\dots,\est{z}_{T} \in \matR^{n}$. If $\lambda = 0$, then each estimate $\est{z}_k$ only uses the information available at this time instant, through $G_k$. Hence, the error $\norm{z^*-\est z}_2^2$ will typically grow linearly with $T$ (large variance). On the other hand, if $\lambda$ is very large, then the estimate $\est z$ will be very `smooth' meaning that $\est z_k$ will be similar for all $k$. Hence, the error $\norm{z^*-\est z}_2^2$ will typically have a large bias. Therefore an intermediate choice of the parameter $\lambda$ is important to achieve the right bias-variance trade-off.

\begin{remark}[Laplacian smoothing]
\red{Note that the above estimator has similarities with the Laplacian smoothing estimator  \cite{sadhanala2016total} for the model in \eqref{eq:sig_den}, where an estimate $\hat x$ of $x^*$ is obtained as 
\begin{equation*}
    \hat x = \argmin{x} \norm{y-x}_2^2 + x^\top L x.
\end{equation*}
%
%Some guarantees on the recovery rates are derived under the assumption that the quadratic variation of $\theta^*$ is small enough. 
Indeed, the Dynamic TranSync model can be compared with the setting in \cite{sadhanala2016total} by considering each of the graphs $G_t$ as the vertices of a path graph, for which information is available as the vector of observations $y(t)$ at time $t$. This motivates solving the penalized least-square problem in \eqref{eq:pen_ls_estimator} using the smoothness assumption in \eqref{smooth_assump} as the penalty term. Unlike  Laplacian smoothing, the penalty in our case does not involve a Laplacian matrix.}
\end{remark}
%\hemant{add remark on Laplacian smoothing.}

\paragraph{Projection method.} Our second approach consists of the following two-stage estimator. 
\begin{enumerate}
    \item \textit{Step $1$}: For each $t \in \calT$, we compute $\check z_t \in \matR^{n}$ as the (minimum $\ell_2$ norm) least-squares solution of \eqref{eq:unconstrained_ls}, and form $\check z \in \matR^{n(T+1)}$ by column-stacking $\check z_0, \dots, \check z_{T}$.
    
    \item \textit{Step $2$}: Let $\lfreqsp$ be the space generated by the eigenvectors of $E^\top E$ corresponding to  eigenvalues smaller than a threshold $\tau>0$, and let $\lfreqproj$ be the projection matrix on $\lfreqsp$. Then, the estimator for $z^*$ is defined as
\begin{equation}\label{eq:proj_estimator}
 \est z = \lfreqproj \check z.
 \end{equation}

\end{enumerate}
As will be seen later due to the form of $\calN(E^\top E)$, it will hold that each block $\est z_k \in \matR^n$ satisfies $\est {z}_k^\top \ones_n = 0$ for $k=0,\dots,T$. 
%\hemant{add remark on Laplacian eigenmaps estimator.}
%
%
\begin{remark}[Laplacian eigenmaps]
\red{The above estimator is constructed in a similar fashion as the Laplacian eigenmaps estimator \cite{sadhanala2016total} for the model \eqref{eq:sig_den}, which is obtained by projection of the observations $y$ onto the space spanned by the smallest eigenvectors (i.e., corresponding to the smallest eigenvalues) of the Laplacian $L$. 
%Under the smoothness assumption that the true vector $\theta^*$ has small quadratic variations, recovery guarantees are shown in \cite{sadhanala2016total}. 
In our setting, we do not have direct information on the vertices so we replace it by a least-squares solution of \eqref{eq:unconstrained_ls} in Step $1$. Moreover, we construct the projection matrix using the smallest eigenvectors of $E^\top E$ since $z^*$ lies close to $\calN(E^\top E)$.}
\end{remark}

% Main results
\section{Main results}\label{sec:main_results}
Here we present our main results, namely Theorems \ref{thm:main_result_smooth} and \ref{thm:main_result_proj}, which correspond to theoretical guarantees for the estimation error of the proposed estimators $\hat z$ and $\tilde z$, based on smoothness-penalized least squares and the projection method respectively. We choose to write the results in this section in a stylized form for better readability, highlighting the rates with respect to the time parameter $T$ and hiding the dependency on the rest of the parameters, which will be later made explicit in Sections \ref{sec:analysis_smooth_pen} and \ref{sec:analysis_proj}. Our theoretical results hold under the assumption that each comparison graph $G_t$ is connected, but this is essentially for technical reasons and we believe this requirement can be relaxed, see Remarks \ref{rem:connectedness_assum2} and    \ref{rem:connectedness_assum} for a more detailed discussion. Simulation results in Section \ref{sec:synth_exps} show that the MSE goes to zero for both the estimators (as $T$ increases) even in the very sparse regime where individual $G_t$'s may be disconnected. 
\begin{thmx}[Smoothness-penalized least squares]\label{thm:main_result_smooth}
Let $\hat z \in \matR^{n(T+1)}$ be the estimator defined in \eqref{eq:pen_ls_estimator} where the data is generated by the model \eqref{eq:transync_model} with subgaussian noise parameter $\sigma^2$ and with $z^* \in \matR^{n(T+1)}$ as the ground truth vector of strength parameters. Suppose additionally that $G_t$ is connected for each $t \in \calT$. \rev{Under Assumption \ref{assump:smooth}, if $\lambda =\sigma^{\frac45}  (\frac{T}{S_T})^{2/5}$}, it holds with probability larger than $1-\delta$ 
\begin{equation*}
    \|\hat z- z^*\|_2^2\leq T^{\red{4/5}}S^{\red{1/5}}_T\Psi_{\operatorname{LS}}\big(n,\sigma, \delta\big)+\red{\Psi'_{\operatorname{LS}}\big(n,\sigma, \delta\big).}
\end{equation*}
Here, $\Psi_{\operatorname{LS}}(\cdot)$ and $\red{\Psi'_{\operatorname{LS}}(\cdot)}$ are functions of the parameters of the problem. %\hemant{as mentioned later, there is an additional term (independent of $T$) that should appear here.}
\end{thmx}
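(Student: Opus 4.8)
The plan is to follow the classical bias–variance decomposition for penalized least squares, specialized to this setup where the design operator acts on edge-differences and the penalty is $\lambda \norm{Ez}_2^2$. First I would rewrite the objective \eqref{eq:pen_ls_estimator} in matrix form: let $B_t$ be the (signed) edge-incidence operator of $G_t$ so that $x(t) = B_t^\top z_t$, stack these into a block-diagonal operator $B$, and write the objective as $\norm{y - B^\top z}_2^2 + \lambda \norm{Ez}_2^2$ over the subspace $\{z : (I_{T+1}\otimes\ones_n^\top)z = 0\}$. The normal equations give $\hat z = (BB^\top + \lambda E^\top E)^\dagger B y$ on the appropriate subspace, and since $y = B^\top z^* + \epsilon$ with $z^*$ block-centered, the error splits as
\begin{equation*}
\hat z - z^* = \underbrace{-\lambda (BB^\top + \lambda E^\top E)^\dagger E^\top E z^*}_{\text{bias}} + \underbrace{(BB^\top + \lambda E^\top E)^\dagger B\epsilon}_{\text{variance}},
\end{equation*}
using that $(BB^\top+\lambda E^\top E)^\dagger BB^\top z^* = z^* - \lambda(BB^\top+\lambda E^\top E)^\dagger E^\top E z^*$ once one checks $z^*$ lies in the range of the relevant operator (this is where connectedness of each $G_t$ enters: it guarantees $\calN(B_t^\top) = \spn(\ones_n)$, so $BB^\top$ restricted to the block-centered subspace is invertible, and its smallest nonzero eigenvalue is controlled by the algebraic connectivity of the individual $G_t$'s).

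Next I would bound the two terms separately. For the bias term, $\norm{\text{bias}}_2^2 \le \lambda^2 \norm{(BB^\top+\lambda E^\top E)^\dagger}_2 \cdot \norm{(BB^\top+\lambda E^\top E)^{\dagger/2} E^\top E z^*}_2^2$; more simply one uses $\lambda^2 z^{*\top} E^\top E (BB^\top+\lambda E^\top E)^{-2} E^\top E z^*$ and the operator bound $\lambda E^\top E (BB^\top + \lambda E^\top E)^{-1} \preccurlyeq I$ together with $\lambda E^\top E(BB^\top+\lambda E^\top E)^{-1} \preccurlyeq \lambda\mu_{\min}^{-1} E^\top E$ where $\mu_{\min}$ is the smallest nonzero eigenvalue of $BB^\top$; combined with Assumption \ref{assump:smooth} this yields a bias bound of order $\lambda S_T$ (up to the connectivity-dependent constants hidden in $\Psi_{\operatorname{LS}}$). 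For the variance term, $\expec\norm{\text{variance}}_2^2 = \sigma^2 \Tr\big(B^\top(BB^\top+\lambda E^\top E)^{-2}B\big)$, and the key is that this trace is controlled by counting eigenvalues of $E^\top E$ below roughly $\mu_{\min}/\lambda$: writing things in the eigenbasis of $E^\top E = (M^\top M)\otimes(\complincmat^\top\complincmat)$ (whose spectrum is the product of the path-graph Laplacian eigenvalues $\approx (2\sin(\pi\ell/2T))^2$ and the fixed eigenvalues of $\complincmat^\top\complincmat = nI_n - \ones_n\ones_n^\top$ on the centered subspace, i.e. the single value $n$), one gets a bound of order $\sigma^2 \cdot n \cdot |\{\ell : \lambda \ell^2/T^2 \lesssim 1\}| \asymp \sigma^2 n \sqrt{T/\lambda}\,$; a subgaussian concentration argument (Hanson–Wright, as in \cite{vershynin2010introduction}) upgrades the expectation bound to a high-probability bound, contributing the $\delta$-dependence and the additive lower-order term $\Psi'_{\operatorname{LS}}$.

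Finally I would balance: bias$^2 \lesssim \lambda S_T$ and variance $\lesssim \sigma^2 n\sqrt{T/\lambda}$, so the total is minimized (in order) at $\lambda \asymp (T/S_T)^{2/5}$, giving $\lambda S_T \asymp T^{2/5}S_T^{3/5}$ and $\sqrt{T/\lambda} \asymp T^{2/5}S_T^{1/5}$ — and here I would reconcile with the stated rate $T^{4/5}S_T^{1/5}$, which suggests the intended scaling is slightly different (e.g. the variance term carries an extra factor, or the bias is measured after the operator $BB^\top$ is applied); in any case the optimization is a one-variable calculus exercise once the two bounds are in hand. The main obstacle I anticipate is the variance-term trace estimate: getting the sharp eigenvalue-counting bound requires carefully handling the interaction between the Kronecker structure of $E^\top E$ and the non-commuting, time-varying operator $BB^\top$ — in particular one cannot simultaneously diagonalize them, so I would instead bound $B^\top(BB^\top+\lambda E^\top E)^{-2}B \preccurlyeq \mu_{\min}^{-1}(BB^\top+\lambda E^\top E)^{-1}BB^\top(BB^\top+\lambda E^\top E)^{-1}\cdot(\text{something}) \preccurlyeq \mu_{\min}^{-1}(I + \lambda\mu_{\min}^{-1}E^\top E)^{-1}$ restricted to the centered subspace, reducing the trace to one purely in terms of the explicit spectrum of $E^\top E$ at the cost of the connectivity constant $\mu_{\min}$; a secondary subtlety is verifying that the pseudo-inverse manipulations are valid, i.e. that $\range(B) + \range(E^\top E)$ covers the block-centered subspace, which again reduces to connectivity of each $G_t$.
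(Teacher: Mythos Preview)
Your overall architecture matches the paper's: the bias--variance split via the normal equations, the Hanson--Wright style concentration for the variance, and the Loewner comparison $BB^\top \succcurlyeq \mu_{\min}\tilde L$ (with $\tilde L = I_{T+1}\otimes \Lcom$) to reduce the variance trace to the explicit path-graph spectrum. The variance estimate is essentially right up to an arithmetic slip (the eigenvalue count $|\{\ell:\lambda\ell^2/T^2\lesssim 1\}|$ is $\asymp T/\sqrt\lambda$, not $\sqrt{T/\lambda}$, and this $T/\sqrt\lambda$ is exactly the paper's Lemma~\ref{lem:var_term}).

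The genuine gap is in the bias bound. You claim $\|\text{bias}\|_2^2\lesssim \lambda S_T$, but your operator manipulations do not go through when the graphs evolve: the inequalities ``$\lambda E^\top E(BB^\top+\lambda E^\top E)^{-1}\preccurlyeq I$'' and ``$\lambda E^\top E(BB^\top+\lambda E^\top E)^{-1}\preccurlyeq \lambda\mu_{\min}^{-1}E^\top E$'' involve \emph{non-symmetric} products of non-commuting PSD matrices, so Loewner ordering does not even apply as written; and the correct symmetrized versions do not combine to give what you need, because $A\preccurlyeq B$ does not imply $A^2\preccurlyeq B^2$. The paper makes exactly this point (Section~\ref{sec:evoling_Gt_ls}): although $L(\lambda)\succcurlyeq \tilde L(\lambda)$, one cannot conclude $L(\lambda)^2\succcurlyeq \tilde L(\lambda)^2$. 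Consequently, in the general connected-evolving case the paper only proves the cruder bound
\[
\lambda^2\|L^\dagger(\lambda)E^\top E z^*\|_2^2 \;\le\; \lambda^2\|L^\dagger(\lambda)\|_2^2\,\|E^\top\|_2^2\,\|Ez^*\|_2^2 \;\lesssim\; \frac{\lambda^2 n}{\lamin^2(L)}\,S_T,
\]
i.e.\ order $\lambda^2 S_T$, not $\lambda S_T$ (Lemma~\ref{lem:bound_bias_non_evol}, part 1). Balancing $\lambda^2 S_T$ against $T/\sqrt\lambda$ is precisely what yields $\lambda=(T/S_T)^{2/5}$ and the rate $T^{4/5}S_T^{1/5}$---this is the ``reconciliation'' you were looking for. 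The sharper $\lambda S_T$ bias bound and the resulting $T^{2/3}S_T^{1/3}$ rate are obtained in the paper only in the non-evolving case (where $BB^\top$ and $E^\top E$ commute, Lemma~\ref{lem:ls_bias_term}) or under the extra technical Assumption~\ref{assump:technical_lower_bound} (Theorem~\ref{thm:error_ls_evol_graph_3}).
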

A more formal version of this theorem is given by Theorem \ref{thm:error_ls_fixed_graph_2} in Section \ref{sec:analysis_smooth_pen}, where the functions $\Psi_{\operatorname{LS}}(\cdot)$, $\Psi'_{\operatorname{LS}}(\cdot)$ are made explicit, up to universal constants. 
\begin{thmx}[Projection method]\label{thm:main_result_proj}
Let $\est z \in \matR^{n(T+1)}$ be the estimator defined in \eqref{eq:proj_estimator}. Assume that each comparison graph  in the sequence $(G_k)^T_{k=0}$ is connected. If $\tau = \rev{\sigma^{-\frac43}}(\frac{S_T}{T})^{2/3}$, then it holds with probability larger than $1-\delta$ that
\begin{equation*}
    \|\est z- z^*\|_2^2\leq T^{2/3}S_T^{1/3}\Psi_{\operatorname{Proj}}\big(n,\sigma, \delta\big) +\red{\Psi'_{\operatorname{Proj}}\big(n,\sigma, \delta\big)}
\end{equation*}
where $\Psi_{\operatorname{Proj}}(\cdot)$ and $\red{\Psi'_{\operatorname{Proj}}(\cdot)}$ are functions of the parameters of the problem. 
%\hemant{It is not clear to me why there is no additional term like the previous theorem. We should check why this is the case, just to be sure.}
\end{thmx}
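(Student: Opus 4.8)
The plan is to run a bias--variance analysis of the two-stage estimator, exploiting the Kronecker structure $E^\top E = (MM^\top)\otimes(CC^\top)$. First I would reduce the Step~1 output to ``signal plus back-propagated noise'': fixing $t$ and letting $B_t$ be an oriented incidence matrix of $G_t$ (so $y(t) = B_t^\top z_t^* + \epsilon(t)$ and $L_t := B_tB_t^\top$ is the Laplacian of $G_t$), the minimum-$\ell_2$-norm least-squares solution of \eqref{eq:unconstrained_ls} is $\check z_t = L_t^\dagger B_t\, y(t)$; since $G_t$ is connected, $L_t^\dagger B_tB_t^\top$ is the orthogonal projection onto $\ones_n^\perp$, and as $z_t^*$ is block-centered this gives $\check z_t = z_t^* + w_t$ with $w_t := L_t^\dagger B_t\,\epsilon(t)$ a centered subgaussian vector supported on $\ones_n^\perp$ satisfying $\operatorname{Cov}(w_t) \preccurlyeq \sigma^2 L_t^\dagger$. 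Stacking over $t$, $\check z = z^* + w$, and since $\est z = \lfreqproj \check z$,
\[
\est z - z^* \;=\; -\,\hfreqproj z^* \;+\; \lfreqproj w ,
\]
which is a sum of a vector in $\hfreqsp$ and a vector in $\lfreqsp$, hence orthogonal, so $\norm{\est z - z^*}_2^2 = \norm{\hfreqproj z^*}_2^2 + \norm{\lfreqproj w}_2^2$ --- the bias and variance terms respectively.

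For the bias, $\hfreqproj$ commutes with $E^\top E$ and annihilates every eigenspace with eigenvalue $<\tau$, so $v^\top E^\top E\, v \ge \tau\,\norm{\hfreqproj v}_2^2$ for all $v$; taking $v = z^*$ and invoking Assumption~\ref{assump:smooth} in the form $z^{*\top}E^\top E\, z^* = \norm{E z^*}_2^2 \le S_T$ yields $\norm{\hfreqproj z^*}_2^2 \le S_T/\tau$.

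For the variance I would first count $\dim\lfreqsp$. The eigenvalues of $CC^\top$ (Laplacian of $K_n$) are $0$ and $n$ with multiplicity $n-1$; since $\check z$ and $w$ are block-centered, only eigenvectors of $E^\top E$ of the form $p\otimes q$ with $q\perp\ones_n$ contribute to $\lfreqproj w$, and on $\matR^{T+1}\otimes\ones_n^\perp$ the operator $E^\top E$ acts as $n\,(MM^\top)\otimes I_{n-1}$. The path-Laplacian eigenvalues $\mu_k = 2-2\cos(\pi k/(T+1))$, $k=0,\dots,T$, satisfy $\mu_k \asymp (k/T)^2$, so (for $\tau \lesssim n$) the number of $k$ with $n\mu_k < \tau$ is $\asymp 1 + T\sqrt{\tau/n}$ and the effective dimension is $d_\tau \lesssim n + \sqrt n\,T\sqrt\tau$. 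Next, $\norm{\lfreqproj w}_2^2 = \epsilon^\top A\,\epsilon$ is a quadratic form in the i.i.d.\ subgaussian vector $\epsilon = (\epsilon(t))_{t\in\calT}$ with $A = Q^\top\lfreqproj Q$, $Q = \operatorname{blkdiag}(L_t^\dagger B_t)$, hence $QQ^\top = \operatorname{blkdiag}(L_t^\dagger)$, and writing $\lamin := \min_t \lambda_{n-1}(L_t) > 0$ (finite precisely because each $G_t$ is connected --- the only place connectivity enters) one gets $\norm A_2 \le 1/\lamin$, $\Tr A \le d_\tau/\lamin$, $\norm A_F \le \sqrt{d_\tau}/\lamin$. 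The Hanson--Wright inequality then gives, with probability at least $1-\delta$,
\[
\norm{\lfreqproj w}_2^2 \;\lesssim\; \frac{\sigma^2}{\lamin}\Big(d_\tau + \log\tfrac1\delta\Big) \;\lesssim\; \frac{\sigma^2}{\lamin}\Big(\sqrt n\,T\sqrt\tau + n + \log\tfrac1\delta\Big).
\]

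Combining the two bounds,
\[
\norm{\est z - z^*}_2^2 \;\lesssim\; \frac{S_T}{\tau} \;+\; \frac{\sigma^2\sqrt n}{\lamin}\,T\sqrt\tau \;+\; \frac{\sigma^2}{\lamin}\Big(n + \log\tfrac1\delta\Big),
\]
and the first two terms are balanced at $\tau \asymp (S_T/T)^{2/3}$, where both equal $\asymp T^{2/3}S_T^{1/3}$; this is the stated choice $\tau = (S_T/T)^{2/3}$ with the $n,\sigma,\lamin$ factors absorbed into $\Psi_{\operatorname{Proj}}$ and the $\tau$-free remainder into $\Psi'_{\operatorname{Proj}}$. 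Since $S_T = o(T)$ we get $\tau\to 0$ (so the regime $\tau\lesssim n$ used above is valid) and $T^{2/3}S_T^{1/3} = o(T)$, i.e.\ \eqref{eq:mse_consis}. I expect the main obstacle to be the third paragraph: obtaining a sharp two-sided count of the eigenvalues of $E^\top E$ below $\tau$ (which needs the explicit cosine spectrum of the path Laplacian and the estimate $\mu_k \asymp (k/T)^2$ valid for $k\lesssim T$), and controlling the quadratic form $\norm{\lfreqproj w}_2^2$ uniformly --- the noise is only subgaussian, so the ``mean'' of the quadratic form must be bounded rather than computed exactly, and the matrices $L_t^\dagger B_t$ couple the projection to the per-graph spectra, which is what forces the dependence on $\lamin$.
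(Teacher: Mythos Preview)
Your proposal is correct and follows essentially the same architecture as the paper's proof: the same orthogonal bias--variance split $\norm{\est z-z^*}_2^2=\norm{\hfreqproj z^*}_2^2+\norm{\lfreqproj w}_2^2$, the same Parseval argument yielding $\norm{\hfreqproj z^*}_2^2\le S_T/\tau$, the same count of path-Laplacian eigenvalues below threshold via $\mu_k\asymp (k/T)^2$, and the same balancing $\tau=(S_T/T)^{2/3}$.

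The one substantive difference is in how you bound the variance trace. The paper writes $\Tr(\Sigma)=\Tr(QQ^\top L^\dagger\lfreqproj L^\dagger)\le\|L\|_2\Tr(\lfreqproj L^{\dagger2})$ and then invokes the Loewner comparison $L\succcurlyeq\min_k\beta_k\,\tilde L$ (with $\tilde L=I_{T+1}\otimes\Lcom$) to pass to $\Tr(\lfreqproj\tilde L^{\dagger2})$, picking up a factor $\|L\|_2/\lamin^2(L)$. You instead absorb $L_t^\dagger$ directly into the back-propagation map, write $QQ^\top=\operatorname{blkdiag}(L_t^\dagger)$, and use $\Tr(\lfreqproj QQ^\top)\le\|QQ^\top\|_2\Tr(\lfreqproj)=d_\tau/\lamin$. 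This is a bit more direct and actually yields a cleaner dependence on the graph spectra (one power of $1/\lamin$ rather than the paper's $\|L\|_2/\lamin^2$-type constant). The paper's route, on the other hand, makes the reduction to a single ``reference'' Laplacian $\tilde L$ explicit, which parallels the penalized-least-squares analysis in Section~\ref{sec:analysis_smooth_pen}. Both concentration tools (your Hanson--Wright versus the paper's Hsu--Kakade--Zhang bound) give equivalent conclusions here.
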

For a more formal statement of the previous theorem (with the explicit $\Psi_{\operatorname{Proj}}, \Psi'_{\operatorname{Proj}}$), we direct the reader to Theorem \ref{thm:error_proj_evol} in Section \ref{sec:analysis_proj}. Observe that in the case $S_T=O(T)$, the error in both theorems is of order $O(T)$, which is to be expected, since in this case the smoothness constraint is always satisfied and the statement becomes vacuous. 
%Note that this is true for the penalized least squares estimator as well (Theorem \ref{thm:main_result_smooth} is written for $S_T = o(T)$), as will be seen in the proof. 
When $S_T = o(T)$, we see for both estimators that the mean squared error 
\begin{equation*}
    \frac{1}{T+1} \norm{\est{z} - z^*}_2^2 = \frac{1}{T+1} \sum_{k=0}^{T+1} \norm{\est{z}_k - z^*_k}_2^2 = o(1) \quad \text{as} \quad T \rightarrow \infty.
\end{equation*}
\red{However the bound in Theorem \ref{thm:main_result_proj} is better than that in Theorem \ref{thm:main_result_smooth} in terms of dependence on $T$, when $S_T = o(T)$. This is due to technical difficulties arising during the control of the bias term in the proof of Theorem \ref{thm:main_result_smooth}, since $(G_k)_{k=0}^T$ is allowed to be any sequence of connected graphs. In spite of this, we are able to obtain the same error-rate as Theorem \ref{thm:main_result_proj} under additional assumptions. More specifically, in Theorem \ref{thm:error_ls_fixed_graph_2} we assume that the comparison graphs are non-evolving (see Assumption \ref{assump:fixed_graph}) and obtain a rate that matches that of Theorem \ref{thm:main_result_proj}. In Theorem \ref{thm:error_ls_evol_graph_3}, we prove that if the comparison graphs are connected and possibly evolving, and an additional technical condition holds, then we again recover the rate of the projection estimator.} 
%\hemant{Add to this mentioning we get same rate as Theorem B when the graphs are non-evolving, or if the `technical condition' on evolution is satisfied.}
%
\begin{remark}[\rev{Error rates -- Lipschitz smoothness}]
 When $S_T=O(1/T)$, then $\norm{\est{z} - z^*}_2^2 = O(T^{1/3})$ for Theorem \ref{thm:main_result_proj}, which matches the optimal rate for estimating a Lipschitz function on $[0,1]$ over a uniform grid, w.r.t the squared (empirical) $L_2$-norm  \cite[Thm.1.3.1]{nemirovski2000topics}. Indeed, let $f_i:[0,1] \rightarrow
 \matR$ be Lipschitz functions for $i=1,\dots,n$ and define $f(t) = (f_1(t),\dots,f_n(t))^\top$ for each $t \in [0,1]$. Then consider the case where $z^*$ is defined as 
 \begin{equation*}
     z^*_{t,i} := f_i(t) - \frac{\ones^\top f(t)}{n}; \quad i \in [n], t \in \calT.
 \end{equation*}
Clearly, $z^*$ is block-wise centered and satisfies Assumption \ref{assump:smooth} with $S_T = O(1/T)$. 
\end{remark}

% Analysis
%\input{ls_analysis}
%\section{Analysis}

%%%%%%%%%%%%%%Multi-line comment%%%%%%%%%%%%%
%%%%%%%From here%%%%%%%%%%%%%%%%%%%%%%%%%%%%%
\iffalse
\paragraph{Notation.} Let us denote by $Q_t$ the incidence matrix of the graph $G_t$ and $Q$ will denote the $n(T+1)\times |\calE_0|(T+1)$ block diagonal matrix where the blocks on the diagonal are the matrices $Q_t$. Similarly, we define the Laplacian at time $t$ by $L_t:=Q_tQ_t^T$ and the stacked Laplacian as the $n(T+1)\times n(T+1)$ block diagonal matrix with blocks $L_t$ on the main diagonal. We recall that $L(\lambda)=L+\lambda EE^\top$. Let us denote \begin{itemize}
    \item $(\lambda_j,v_j)_{j=1}^n$ the eigenpairs of $CC^\top$,
    \item $(\mu_k,u_k)_{k=0}^T$ the eigenpairs of $MM^\top$,
    \item $(\alpha_{j,t},a_{j,t})_{j=1}^n$ the eigenpairs of $L_t$.
\end{itemize}
Observe that $v_n = \frac{1}{n}\ones_n$ and $(v_j)_{j=1}^{n-1}$ can be any orthonormal basis of $\spn(\ones_n)^\perp$. Let us define \begin{itemize}
\item $\calL_\tau=\{(k,j)\in\{0,\cdots, T\}\times [n]\text{ s.t }\lambda_j\mu_k\}<\tau$, where $\tau>0$,
\item $\calH_\tau=\{0,\cdots, T\}\times [n]\setminus \calL_\tau$,
\item $V_\tau=\spn\{u_k\otimes a_j\}_{(k,j)\in \calL_\tau}$ is the linear space spanned by the low frequency eigenvectors. Denote $P_{\calL_\tau}=VV^T$, the projection onto $\calL_\tau$ and $P_{\calH_\tau}=I-VV^T$ the projection onto $\calH_\tau$ (the orthogonal complement of $\calL_\tau$).
\end{itemize}
\fi
%%%%%%%%%%%%%%Multi-line comment%%%%%%%%%%%%%
%%%%%%%To here%%%%%%%%%%%%%%%%%%%%%%%%%%%%%

\section{Smoothness-penalized least squares estimator analysis}\label{sec:analysis_smooth_pen}
In this section we obtain theoretical guarantees for the error of the estimator $\hat{z}$ given in \eqref{eq:pen_ls_estimator}. The results in this section will build towards proving Theorem \ref{thm:main_result_smooth}, starting from the special case when all the graphs are equal in Section \ref{sec:warm_up_ls}. Before proceeding, we  introduce some notation that will appear in the analysis.  
%\begin{equation}\label{eq:error_ls}
%    \|\hat{z}-z^*\|^2\leq \|\Ldag Q(y-X^*)\|^2+\lambda^2\|\Ldag E^TEz^*\|^2
%\end{equation}
\paragraph{Notation.} Let us denote by $Q_k$ the incidence matrix of the graph $G_k$ and $Q$ will denote the $n(T+1)\times \sum_{k=0}^T |\calE_k|$ block diagonal matrix where the blocks on the diagonal are the matrices $Q_k$. Similarly, we define the Laplacian at time $t=k/T$ by $L_k:=Q_kQ_k^\top$ and $L$ will be defined as the stacked Laplacian, which is the $n(T+1)\times n(T+1)$ block diagonal matrix with blocks $L_k$ on the main diagonal. We define, for $\lambda > 0$, the regularized Laplacian matrix $L(\lambda):=L+\lambda E^\top E$. Let us also define notation for the eigenpairs of matrices $CC^\top, MM^\top$ and $L_k$. 
\begin{itemize}
    \item $(\lambda_j,v_j)_{j=1}^n$ denotes the eigenpairs of $CC^\top$. Observe that $v_n = \frac{1}{n}\ones_n$ and $(v_j)_{j=1}^{n-1}$ can be any orthonormal basis of $\spn(\ones_n)^\perp$. In addition, we have $\lambda_j=n-1$ for $1\leq j\leq n-1$ and $\lambda_n=0$.
    
    \item $(\mu_k,u_k)_{k=0}^T$ denotes the eigenpairs of $MM^\top$ (path graph on $T+1$ vertices), with $\mu_0 \geq \mu_1\geq \mu_2\geq \cdots \geq \mu_{T-1} > \mu_T=0$. Note that $u_{T} = \spn(\ones_{T+1})$.
    
    \item $(\alpha_{k,j},a_{k,j})_{j=1}^n$ denotes the eigenpairs of $L_k$, with $\alpha_{k,1}\geq \alpha_{k,2}\geq \cdots\geq\alpha_{k,n}=0$ for all $k=0,\dots,T$.
\end{itemize}
Now, the estimator $\hat z$ defined in \eqref{eq:pen_ls_estimator} can be equivalently defined as a solution of 
\begin{equation} \label{eq:ls_matrix}
    \min_{\substack{z \in \matR^{n(T+1)}, \\ z_k^\top \ones_n = 0}} \norm{Q^\top z - y}_2^2 + \lambda \norm{E z}_2^2.
\end{equation}
For the unconstrained problem, the solutions of \eqref{eq:ls_matrix} satisfy
\begin{equation}\label{eq:ls_est_mat_eq}
L(\lambda)z = Qy.
\end{equation}
The following lemma provides conditions under which the null space $\calN(L(\lambda)) = \spn{\set{e_k \otimes \ones_n}_{k=0}^{T}}$ where $e_0,\dots,e_{T} \in \matR^{T+1}$ is a canonical basis of $\matR^{T+1}$. 
\begin{lemma} \label{lem:rank_Laplacian_reg}
If the union graph $G_U := ([n], \cup_{t \in \calT}\calE_t)$ is connected, it follows for any $\lambda > 0$ that $\calN\big(L(\lambda)\big)=\spn{\set{e_k \otimes \ones_n}_{k=0}^{T}}$. Here, $e_0,\dots,e_{T} \in \matR^{T+1}$ is a canonical basis of $\matR^{T+1}$.
\end{lemma}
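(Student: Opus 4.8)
The statement is a characterization of the null space of $L(\lambda) = L + \lambda E^\top E$. The plan is to show the two inclusions: $\spn\{e_k \otimes \ones_n\}_{k=0}^T \subseteq \calN(L(\lambda))$ is immediate, and the reverse inclusion is where the connectivity of $G_U$ enters. For the easy direction, recall $L$ is block-diagonal with blocks $L_k = Q_k Q_k^\top$, each of which is a graph Laplacian and hence annihilates $\ones_n$; thus $L(e_k \otimes \ones_n) = e_k \otimes (L_k \ones_n) = 0$. For the term $\lambda E^\top E$, write $E = M^\top \otimes \complincmat^\top$, so $E^\top E = (MM^\top) \otimes (\complincmat\complincmat^\top) = (MM^\top) \otimes (CC^\top)$; since $CC^\top \ones_n = 0$ (it is the Laplacian of $K_n$), we get $E^\top E (e_k \otimes \ones_n) = (MM^\top e_k) \otimes (CC^\top \ones_n) = 0$. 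Hence every $e_k \otimes \ones_n$ lies in $\calN(L(\lambda))$, and since these $T+1$ vectors are linearly independent, $\dim \calN(L(\lambda)) \geq T+1$.

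For the reverse inclusion, take $z$ with $L(\lambda) z = 0$. Since $L(\lambda)$ is a sum of two positive semi-definite matrices, $L(\lambda) z = 0$ forces $z^\top L z = 0$ and $z^\top E^\top E z = \lambda^{-1} \cdot 0 = 0$ simultaneously (using $\lambda > 0$), i.e. $z \in \calN(L) \cap \calN(E^\top E) = \calN(L) \cap \calN(E)$. Writing $z = (z_0, \dots, z_T)$ in blocks, $z^\top L z = \sum_k z_k^\top L_k z_k = 0$ means each $z_k$ is constant on the connected components of $G_k$. Separately, $\calN(E) = \calN(M^\top \otimes \complincmat^\top)$: the condition $E z = 0$ reads $\complincmat^\top(z_k - z_{k+1}) = 0$ for all $k = 0, \dots, T-1$ (since $M^\top$ is the path incidence matrix), and $\complincmat^\top v = 0$ iff $v \in \spn(\ones_n)$ (the complete graph is connected), so $z_k - z_{k+1} \in \spn(\ones_n)$ for every $k$. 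Combining: each $z_k$ is piecewise-constant on components of $G_k$, and consecutive blocks differ by a multiple of $\ones_n$.

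The key step is to upgrade "each $z_k$ constant on components of $G_k$" to "each $z_k$ is a global constant", using that the \emph{union} graph is connected. Fix an edge $\{i,j\} \in \calE_t$ for some $t \in \calT$; I claim $z_{k,i} = z_{k,j}$ for \emph{every} $k$, not just $k = t$. Indeed, $z_{t,i} = z_{t,j}$ because $i,j$ are in the same component of $G_t$; and for any other $k$, $z_k = z_t + c\,\ones_n$ for some scalar $c$ (telescoping the consecutive-difference relation along the path from $t$ to $k$), so $z_{k,i} - z_{k,j} = z_{t,i} - z_{t,j} = 0$. Therefore every edge of $G_U = ([n], \cup_t \calE_t)$ connects coordinates of $z_k$ that are equal, and since $G_U$ is connected, all coordinates of $z_k$ are equal, i.e. $z_k = c_k \ones_n$. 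Hence $z = \sum_k c_k (e_k \otimes \ones_n) \in \spn\{e_k \otimes \ones_n\}_{k=0}^T$, completing the reverse inclusion and the proof.

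\textbf{Anticipated obstacle.} The only genuinely delicate point is the telescoping/propagation argument in the last paragraph — making sure the component structure of the \emph{individual} $G_k$'s interacts correctly with the "consecutive blocks differ by $\ones_n$" constraint to force global constancy via the union graph; everything else (the two Kronecker identities, positive semi-definiteness giving the intersection of null spaces) is routine. One should also double-check the orientation convention so that $\complincmat^\top v = 0 \iff v \in \spn(\ones_n)$ holds as stated, but this is standard for a connected graph's incidence matrix.
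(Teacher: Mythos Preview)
Your proof is correct, but it follows a different route than the paper's. You argue by showing $\calN(L(\lambda)) = \calN(L) \cap \calN(E^\top E)$ (using that a sum of PSD matrices has null space equal to the intersection of the summands' null spaces), then characterize each null space combinatorially and use a telescoping/propagation argument across time to pass from ``$z_k$ constant on components of $G_k$'' to ``$z_k$ globally constant'' via the connectivity of $G_U$. The paper instead works with the quadratic form: it decomposes $\spn^\perp\{e_k \otimes \ones_n\}$ as $\spn\{\ones_{T+1} \otimes v_j\}_{j=1}^{n-1} \oplus \calN^\perp(E^\top E)$, observes that $\lambda E^\top E$ is strictly positive on the second summand, and on the first summand reduces $x^\top L x$ for $x = \ones_{T+1} \otimes v$ to $v^\top\big(\sum_{t} L_t\big) v$, which is positive because $\sum_t L_t$ is the Laplacian of the connected graph $G_U$. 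Your approach is more elementary and self-contained; the paper's spectral decomposition has the side benefit of aligning with the eigenbasis $\{u_k \otimes v_j\}$ used repeatedly later in the analysis, but for the lemma itself both arguments are equally clean.
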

The proof is outlined in Appendix \ref{appsubsec:rank_Laplacian_reg}. It follows that if the union graph $G_U$ is connected, then the estimator $\est{z}$ is given by the following solution of \eqref{eq:ls_est_mat_eq}
\begin{equation}\label{eq:ls_est_mat}
    \est{z} = L^\dagger(\lambda) Qy,
\end{equation}
which is uniquely defined and is orthogonal to $\spn{\set{e_k \otimes \ones_n}_{k=0}^{T}}$ (in other words, $\hat z$ is block-wise centered).
Furthermore, the ground truth $z^*$ satisfies by definition $Lz^*=Qx^*$, which implies that $L(\lambda)z^*=Qx^*+\lambda E^\top Ez^*$. Since $z^*$ is block-wise centered, i.e., $z^* \perp \calN(L(\lambda))$ due to Lemma \ref{lem:rank_Laplacian_reg}, hence it satisfies
\begin{equation}\label{eq:ls_ground_mat}
    z^* = L^\dagger(\lambda)\big(Qx^*+\lambda E^\top Ez^*\big).
\end{equation}
By \eqref{eq:ls_est_mat}, \eqref{eq:ls_ground_mat} and the triangle inequality, we arrive at the following bound
\begin{equation}\label{eq:error_ls}
    \|\hat{z}-z^*\|_2^2\lesssim \|\Ldag Q(y-x^*)\|_2^2+\lambda^2\|\Ldag E^\top Ez^*\|_2^2.
\end{equation}
The first term in the RHS of \eqref{eq:error_ls} is the variance term due to noise that will be controlled by a large enough value of $\lambda$. The second term is the bias which depends on the smoothness of $z^*$, and will be controlled by choosing $\lambda$ to be suitably small. The optimal choice of $\lambda$ will then achieve the right bias-variance trade-off.
\subsection{Warm-up: the non-evolving case}\label{sec:warm_up_ls}
We will first analyze the case where the comparison graph is the same across all times, \rev{as} this case will serve as a foundation for our analysis of the time-evolving case. \rev{We will refer to this case as non-evolving, which is formally defined by the following assumption (notice that this is different from what we referred to previously as the static case, where $T=1$).}
\begin{assumption}[Fixed and connected comparison graph] \label{assump:fixed_graph}
Let $G_0$ be any connected graph on $n$ vertices, we assume that $G_k=G_0$, for all $k =  0,\dots, T$.
\end{assumption}
Under Assumption \ref{assump:fixed_graph}, we have $L_k=L_0$ and $Q_k=Q_0$, which implies that the matrices $Q, L$ and $L(\lambda)$ can be written as
\begin{align*}
Q&=I_{T+1} \otimes Q_0,\\
L&=I_{T+1}\otimes L_0,\\
 L(\lambda) &= I_{T+1}\otimes L_0+\lambda E^\top E.%MM^\top\otimes CC^\top.
\end{align*}
\paragraph{Spectral decomposition of $L(\lambda)$.} In order to obtain an explicit spectral decomposition for $ L(\lambda) $,  we choose $v_j := a_{0,j}$ (given that $a_{k,j}=a_{0,j}$ we will simply write $a_j$ in the sequel) for all $j=1,\dots, n-1$ as the eigenvector basis for $\Lcom:=CC^T$. With this choice, and recalling that $E^\top E=MM^\top\otimes \Lcom$, the eigenpairs of $E^\top E$ associated with nonzero eigenvalues are 
\[\big((n-1)\mu_k, u_k\otimes a_j\big)_{j=1,\cdots,n-1,k=0,\cdots,T-1}.\] Therefore each eigenvalue of the form $(n-1)\mu_k$ has multiplicity $n-1$. Observe that we have the freedom to choose any orthonormal basis as the eigenbasis for $I_{T+1}$,  we opt for the basis given by $\{u_k\}^T_{k=0}$, i.e., the eigenvectors of $MM^T$. Hence, the eigenpairs of $I_{T+1}\otimes L_0$ for nonzero eigenvalues are given by \[(\alpha_j,u_k\otimes a_j)_{j=1,\cdots,n-1,k=0,\cdots,T}.\] Thus, the following decompositions hold.
%where $(e_k)$ denotes the canonical basis of $\matR^{T+1}$. For $EE^\top$, these eigenpairs are $(\lambda_j\mu_k, u_k\otimes v_j) = (\lambda_j\mu_k, u_k\otimes a_j)_{k=0\dots T-1,j=1\dots n-1}$. In order to obtain an eigen decomposition of $L(\lambda)$, one can choose $(u_k)_{k=0}^T$ instead of the canonical basis for the eigenpairs of $I_{T+1}\otimes Q_0Q_0^\top$. Hence, one obtains
%
\begin{align}\label{eq:decomp_spec}
     E^\top E&= \sum_{k=0}^{T-1}\sum_{j=1}^{n-1} (n-1)\mu_k(u_ku_k^\top \otimes a_ja_j^\top),\\
     \label{eq:decomp_L}
     L&=\sum^T_{k=0}\sum_{j=1}^{n-1} \alpha_j(u_ku_k^\top \otimes a_ja_j^\top),\\ \label{eq:decomp_L_lambda}
     L(\lambda)&= \sum_{k=0}^{T-1}\sum_{j=1}^{n-1} (\alpha_j + (n-1)\lambda\mu_k)(u_ku_k^\top \otimes a_ja_j^\top) + \sum_{j=1}^{n-1} \alpha_j(u_Tu_T^\top \otimes a_ja_j^\top).
\end{align}
From \eqref{eq:decomp_L_lambda} we see directly that $L(\lambda)$ has rank $(n-1)(T+1)$ and that its nullspace is given by $\spn\{u_k\otimes a_n\}^T_{k=0} = \spn\set{e_k \otimes \ones_n}_{k=0}^{T}$. Given the above notations and setup, we can now present the following bound on the estimation error.
\begin{proposition}\label{thm:error_ls_fixed_graph} Take $\delta\in (0,e^{-1})$, then under Assumptions \ref{assump:smooth} and \ref{assump:fixed_graph} it holds with probability larger than $1-\delta$ 
\begin{equation}\label{eq:error_ls_tail}
\|\hat z-z^*\|_2^2 \leq \Big(\frac{1}{\alpha^2_{n-1}}\vee 1\Big)\lambda S_T+\sigma^2 \alpha_1 (1+4\log(1/\delta)) \Big(\sum^{T-1}_{k=0}\sum^{n-1}_{j=1}\frac{1}{(\alpha_j+\lambda (n-1)\mu_k)^2}+\sum^{n-1}_{j=1}\frac{1}{{\alpha_j^2}}\Big).
\end{equation}
\end{proposition}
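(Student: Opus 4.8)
The starting point is the error decomposition \eqref{eq:error_ls}, namely $\|\hat z - z^*\|_2^2 \lesssim \|\Ldag Q(y-x^*)\|_2^2 + \lambda^2 \|\Ldag E^\top E z^*\|_2^2$, and the plan is to bound the bias term (second summand) deterministically and the variance term (first summand) with high probability, using the explicit spectral decomposition \eqref{eq:decomp_L_lambda} of $L(\lambda)$ that is available under Assumption~\ref{assump:fixed_graph}. For the bias term, I would write $z^* = \sum_{k,j} c_{k,j}(u_k \otimes a_j)$ in the eigenbasis (the $k=T$ component vanishes since $z^*$ is block-centered and lies in $\spn(\ones_n)^\perp$ blockwise). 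Then $E^\top E z^* = \sum_{k\le T-1, j} (n-1)\mu_k c_{k,j}(u_k\otimes a_j)$ and $\Ldag E^\top E z^* = \sum_{k\le T-1,j} \frac{(n-1)\mu_k}{\alpha_j + (n-1)\lambda \mu_k} c_{k,j}(u_k\otimes a_j)$, so that
\begin{equation*}
\lambda^2 \|\Ldag E^\top E z^*\|_2^2 = \lambda^2 \sum_{k=0}^{T-1}\sum_{j=1}^{n-1} \frac{(n-1)^2\mu_k^2}{(\alpha_j + (n-1)\lambda\mu_k)^2} c_{k,j}^2 \le \lambda \Big(\frac{1}{\alpha_{n-1}^2}\vee 1\Big)\sum_{k,j}(n-1)\mu_k c_{k,j}^2,
\end{equation*}
where the inequality uses the elementary bound $\frac{\lambda x^2}{(\alpha + \lambda x)^2} \le \frac{1}{\alpha^2}\vee 1$ for $x = (n-1)\mu_k \ge 0$, $\alpha = \alpha_j \ge \alpha_{n-1}$ (splitting on whether $\lambda x \le 1$ or $\ge 1$). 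The remaining sum $\sum_{k,j}(n-1)\mu_k c_{k,j}^2$ is exactly $z^{*\top} E^\top E z^* = \sum_k \|C^\top(z^*_k - z^*_{k+1})\|_2^2 \le S_T$ by Assumption~\ref{assump:smooth}, which yields the first term of \eqref{eq:error_ls_tail}.

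For the variance term, note $Q(y - x^*) = Q\epsilon$ where $\epsilon$ stacks the $\epsilon_{ij}(t)$, and I would expand $\|\Ldag Q\epsilon\|_2^2 = \epsilon^\top Q^\top L^\dagger(\lambda)^2 Q \epsilon$. The key operator inequality here is $Q^\top L^\dagger(\lambda)^2 Q \preccurlyeq \alpha_1\, L^\dagger(\lambda)^2 \cdot (\text{something})$ — more precisely, since $QQ^\top = L \preccurlyeq \alpha_1 I$ blockwise (actually $L = I_{T+1}\otimes L_0$ and $\|L_0\|_2 = \alpha_1$), one gets $Q^\top M Q \preccurlyeq \alpha_1 \|M\|_2 \cdot (\text{rank considerations})$... the cleaner route: write $\mathbb{E}\|\Ldag Q\epsilon\|_2^2 = \sigma^2 \Tr(L^\dagger(\lambda) Q Q^\top L^\dagger(\lambda)) = \sigma^2 \Tr(L^\dagger(\lambda) L L^\dagger(\lambda))$, and bound $L^\dagger(\lambda) L L^\dagger(\lambda) \preccurlyeq \alpha_1 L^\dagger(\lambda)^2$ (using $L \preccurlyeq \alpha_1 \Pi$ where $\Pi$ is the projection onto $\range(L(\lambda))$, which commutes with $L^\dagger(\lambda)$ since both are diagonal in the $\{u_k\otimes a_j\}$ basis). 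Reading off eigenvalues of $L^\dagger(\lambda)^2$ from \eqref{eq:decomp_L_lambda} gives $\Tr(L^\dagger(\lambda)^2) = \sum_{k=0}^{T-1}\sum_{j=1}^{n-1}\frac{1}{(\alpha_j + (n-1)\lambda\mu_k)^2} + \sum_{j=1}^{n-1}\frac{1}{\alpha_j^2}$, which is the bracketed sum in \eqref{eq:error_ls_tail}. To pass from the expectation to a high-probability tail bound, I would invoke a Hanson--Wright-type concentration inequality for quadratic forms in subgaussian vectors (the $\epsilon_{ij}(t)$ are independent with $\psi_2$-norm $\sigma$): $\|\Ldag Q\epsilon\|_2^2 \le \sigma^2 \alpha_1 \Tr(L^\dagger(\lambda)^2)(1 + 4\log(1/\delta))$ with probability at least $1-\delta$, where the form of the tail (the $1 + 4\log(1/\delta)$ factor and the restriction $\delta < e^{-1}$) matches a standard one-sided deviation bound controlling both the $\|\cdot\|_F$ and $\|\cdot\|_2$ terms of Hanson--Wright by the trace.

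The main obstacle I anticipate is the variance-term operator inequality: getting the clean factor $\alpha_1$ (rather than, say, $\alpha_1^2$ times $\Tr(L^\dagger(\lambda))$, or a spurious $n$) requires carefully using that $L(\lambda)$, $L$, and their pseudo-inverses are \emph{simultaneously diagonalizable} in the basis $\{u_k \otimes a_j\}$ — this is precisely what Assumption~\ref{assump:fixed_graph} buys us and is exactly why the non-evolving case is the tractable ``warm-up.'' One must also be careful about the nullspace: $L^\dagger(\lambda)$ kills $\spn\{u_k\otimes a_n\}$, and one must check $Q\epsilon$ and $E^\top E z^*$ have no component there (true, since $a_n = \ones_n/\sqrt n$ and $Q_0^\top$ has columns orthogonal to $\ones_n$... wait, rather $Q_0 \ones_{|\calE_0|}$-type checks), so the pseudo-inverse behaves like a genuine inverse on the relevant subspace. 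The concentration step is routine given a Hanson--Wright statement with the stated constants, so I would cite it rather than reprove it.
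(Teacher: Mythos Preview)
Your proposal is correct and follows essentially the same route as the paper: the paper also splits into Lemmas for the bias and variance, uses the joint eigenbasis $\{u_k\otimes a_j\}$ for the bias bound together with the elementary inequality $\big(\tfrac{c}{d+c}\big)^2 \le \big(\tfrac{1}{d^2}\vee 1\big)c$ (your case split on $\lambda x\lessgtr 1$ is exactly this, modulo a typo in your displayed inequality where the right-hand side is missing a factor of $\lambda x$), and for the variance cites the Hanson--Wright bound of Hsu--Kakade--Zhang to get the $(1+4\log(1/\delta))$ factor. The only cosmetic difference is that the paper controls the variance via the trace inequality $\Tr(QQ^\top L^{\dagger 2}(\lambda))\le \|QQ^\top\|_2\,\Tr(L^{\dagger 2}(\lambda))$ directly (valid for any p.s.d.\ pair, no simultaneous diagonalization needed at that step), rather than through the operator inequality $L^\dagger(\lambda)LL^\dagger(\lambda)\preccurlyeq \alpha_1 L^{\dagger 2}(\lambda)$ you sketch; both give the same bound here.
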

The proof is detailed in Section \ref{sec:prop_fix_graph_pen_error}.  

\paragraph{Choice of $\lambda$.} The right hand side of the estimation error bound \eqref{eq:error_ls} can be regarded as the sum of a bias and a variance term, representing an instance of the bias-variance trade-off phenomenon. Proposition \ref{thm:error_ls_fixed_graph} gives an error bound where the dependence on $\lambda$ (and the other parameters of the problem) is explicit. The following lemma helps us  further simplify the dependence on $\lambda$ for the variance term (second term in the RHS of \eqref{eq:error_ls_tail}). 
\begin{lemma}\label{lem:var_term}
We have 
\begin{equation*} 
\sum^{T-1}_{k=0}\sum^{n-1}_{j=1}\frac1{(\alpha_j+\lambda (n-1)\mu_k)^2}\lesssim \rev{\frac{T\sqrt{n-1}}{\alpha_{n-1}^{3/2}\sqrt{\lambda}}}
%+ \frac{T(n-1)}{\alpha_{n-1}^2+\alpha_{n-1}(n-1)\lambda}.
\end{equation*}
\end{lemma}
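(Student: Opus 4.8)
The plan is to split the double sum according to whether the path-graph eigenvalue $\mu_k$ is "small" or "large" relative to a cutoff, and to bound each piece separately. Recall that the eigenvalues of $MM^\top$ for the path graph on $T+1$ vertices are $\mu_k = 2 - 2\cos(\pi k/(T+1)) = 4\sin^2(\pi k/(2(T+1)))$ for $k=0,\dots,T$, so in particular $\mu_k \asymp (k/T)^2$ for $k$ in the range $0 \le k \le T/2$, and by the symmetry $\mu_{T-k}$ versus $\mu_k$ one gets the matching two-sided bound $\mu_k \gtrsim (k/T)^2 \wedge ((T-k)/T)^2$ on the whole range. Since $\alpha_j \ge \alpha_{n-1} > 0$ for $j = 1,\dots,n-1$ (here using that $G_0$ is connected so that $\alpha_{n-1} = \lambda_{n-1}(L_0) > 0$), we have the crude bound $\frac{1}{(\alpha_j + \lambda(n-1)\mu_k)^2} \le \frac{1}{(\alpha_{n-1} + \lambda(n-1)\mu_k)^2}$, so it suffices to bound $(n-1)\sum_{k=0}^{T-1}\frac{1}{(\alpha_{n-1}+\lambda(n-1)\mu_k)^2}$.

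For the inner sum over $k$, I would first handle the terms where $\mu_k$ is of order at most $\alpha_{n-1}/(\lambda(n-1))$ — call this index set small — by just bounding each summand by $1/\alpha_{n-1}^2$; the number of such $k$ is $O(T\sqrt{\alpha_{n-1}/(\lambda (n-1))}) + O(1)$ because $\mu_k \asymp (k/T)^2$ near the endpoints, giving a contribution of order $\frac{T}{\alpha_{n-1}^{3/2}\sqrt{\lambda(n-1)}} + \frac{1}{\alpha_{n-1}^2}$. For the remaining "large" terms, $\lambda(n-1)\mu_k$ dominates $\alpha_{n-1}$, so $\frac{1}{(\alpha_{n-1}+\lambda(n-1)\mu_k)^2} \lesssim \frac{1}{\lambda^2(n-1)^2\mu_k^2} \lesssim \frac{T^4}{\lambda^2(n-1)^2 k^4}$ (again using $\mu_k \gtrsim (k/T)^2$, symmetrizing around $T/2$); summing $\sum_k 1/k^4$ over $k \gtrsim T\sqrt{\alpha_{n-1}/(\lambda(n-1))}$ via an integral comparison gives $O\big(T^{-3}(\alpha_{n-1}/(\lambda(n-1)))^{-3/2}\big)$, so this block contributes $O\big(\frac{T}{\alpha_{n-1}^{3/2}\sqrt{\lambda}\sqrt{n-1}}\big)$ as well. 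Multiplying the inner-sum bound by the factor $(n-1)$ pulled out earlier reproduces the first claimed term $\frac{T\sqrt{n-1}}{\alpha_{n-1}^{3/2}\sqrt\lambda}$ (up to the universal constant), plus a lower-order term $\frac{n-1}{\alpha_{n-1}^2}$; the latter is absorbed into the second claimed term $\frac{T(n-1)}{\alpha_{n-1}^2 + \alpha_{n-1}(n-1)\lambda}$ since that term is $\asymp \frac{T(n-1)}{\alpha_{n-1}^2}$ when $\lambda$ is small and in any case $\frac{n-1}{\alpha_{n-1}^2} \le \frac{T(n-1)}{\alpha_{n-1}^2}$.

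An alternative, and probably cleaner, route for the whole estimate is a direct integral comparison: since $k \mapsto \mu_k$ is monotone on $[0, T/2]$ and smooth, $\sum_{k=0}^{T-1}\frac{1}{(\alpha_{n-1}+\lambda(n-1)\mu_k)^2} \lesssim \int_0^{T/2}\frac{dk}{(\alpha_{n-1}+c\lambda(n-1)(k/T)^2)^2} + (\text{symmetric half})$, and after the substitution $u = k/T$ this becomes $T\int_0^{1/2}\frac{du}{(\alpha_{n-1}+c\lambda(n-1)u^2)^2}$, a one-dimensional integral of the form $\int_0^\infty (a+bu^2)^{-2}\,du = \frac{\pi}{4}a^{-3/2}b^{-1/2}$ which evaluates to exactly $\frac{T\sqrt{n-1}}{\alpha_{n-1}^{3/2}\sqrt\lambda}$ up to a constant; one then separately accounts for the $k=0$ term (and the $\mu_k = 0$ degeneracy is not an issue since $\mu_{T}$ is excluded from the sum), which contributes the $\frac{n-1}{\alpha_{n-1}^2}$-type term that, as noted, folds into the stated second term. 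The main obstacle — really the only delicate point — is getting the two-sided bound $\mu_k \asymp (\min(k, T+1-k)/(T+1))^2$ with honest universal constants (via $\frac{2}{\pi}x \le \sin x \le x$ on $[0,\pi/2]$) and making sure the integral-to-sum comparison is justified on each monotone branch; once that is in hand the rest is routine calculus, and the second term in the statement is essentially a catch-all for the low-frequency ($k$ near $0$ or $T$) contributions in the regime where $\lambda$ is not large.
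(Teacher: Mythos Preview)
Your proposal is correct and follows essentially the same route as the paper: reduce to $\alpha_{n-1}$ via $\alpha_j\ge\alpha_{n-1}$, use the explicit path-graph eigenvalues, and compare the resulting sum to an integral of the form $\int(a+bu^2)^{-2}\,du$.

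The one substantive difference is how the second term of the bound appears. The paper keeps the integral over a \emph{finite} interval and evaluates it via the antiderivative
\[
I(t)=\int_0^t\frac{du}{(1+u^2)^2}=\frac{\arctan t}{2}+\frac{t}{2(1+t^2)},
\]
so that the $\arctan$ piece yields the first term and the rational piece yields exactly $\frac{T(n-1)}{\alpha_{n-1}^2+\alpha_{n-1}(n-1)\lambda}$ (up to constants). You instead extend the integral to infinity (or split the sum at a threshold), which produces only the first term plus a leftover of order $\frac{n-1}{\alpha_{n-1}^2}$. Your absorption of this leftover into the second stated term is a bit loose; the clean observation is that, setting $s=(n-1)\lambda/\alpha_{n-1}$, the ratio of the second stated term to the first equals $\sqrt{s}/(1+s)\le\tfrac12$, so the second term is \emph{always} dominated by the first and any bound that captures the first term already implies the lemma. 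Either way the conclusion stands.

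A minor cosmetic point: with the formula $\mu_k=4\sin^2\!\big(\pi k/(2(T+1))\big)$ the sequence is monotone on $\{0,\dots,T\}$ (the argument stays in $[0,\pi/2)$), so no symmetry argument is needed to get $\mu_k\asymp(k/T)^2$; this also makes the sum-to-integral comparison immediate on a single monotone branch.
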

The proof is outlined in Section \ref{sec:proof_lem_var_term}. 
Combining the results of Proposition \ref{thm:error_ls_fixed_graph} and Lemma \ref{lem:var_term}, we obtain
\begin{equation*}
\|\hat z-z^*\|_2^2 \lesssim O\big(\lambda S_T+\rev{\sigma^2}\frac{T}{\sqrt{\lambda}}\big) + \sigma^2 \alpha_1 \log(1/\delta) \sum^{n-1}_{j=1}\frac{1}{{\alpha_j^2}},
\end{equation*}
where we recall that the asymptotic notation $ O(\cdot)$ hides the dependence on constants and all parameters except for $T$ \rev{and $\sigma$}. It is easy to see that the optimal choice for $\lambda$ (in terms of $T, S_T$ \rev{and $\sigma$}) is then given by \begin{equation}\label{eq:lambda_choice}
    \rev{\lambda=\argmin{\lambda'}{\ \lambda' S_T+\sigma^2\frac{T}{\sqrt{\lambda'}}},}
\end{equation} which corresponds to $\lambda=\rev{\sigma^{\frac43}}(\frac{T}{S_T})^{2/3}$. \rev{The choice of $\lambda$ given by \eqref{eq:lambda_choice} is well defined (there is a unique minimizer) in the case where at least one of the terms $S_T$ and $\sigma$ is non-zero. On the other hand, for the case $\sigma=S_T=0$, the estimation error is zero for any choice of $\lambda$, as is clear from Proposition \ref{thm:error_ls_fixed_graph}. We will only consider the case when at least one of $\sigma,S_T$ is non-zero in the sequel.}
Plugging this in \eqref{eq:error_ls_tail} and using Lemma \ref{lem:var_term} we arrive at the following bound on the estimation error. 
\begin{theorem}\label{thm:error_ls_fixed_graph_2}
Let $\delta \in (0,e^{-1})$. Under Assumptions \ref{assump:smooth} and \ref{assump:fixed_graph}, choosing $\lambda=\rev{\sigma^{\frac43}}(\frac{T}{S_T})^{2/3}$, it holds with probability larger than $1-\delta$ that
\begin{equation*}
\|\hat z-z^*\|_2^2\lesssim \rev{\sigma^{\frac43}}T^{\frac23}S_T^{\frac13}\Big(\frac{1}{\alpha_{n-1}^2}\vee 1+\frac{\alpha_1 \sqrt{n-1}} {\alpha^{3/2}_{n-1}}\log{(1/\delta)}\Big)+ \sigma^2\alpha_1 \log{(1/\delta)}\sum^{n-1}_{j=1}\frac{1}{{\alpha_j^2}}.
%\frac{T^{\frac13}S_T^{\frac23}}{\alpha^2_{n-1}}.
\end{equation*}
\end{theorem}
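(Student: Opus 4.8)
The plan is to obtain Theorem \ref{thm:error_ls_fixed_graph_2} as an essentially mechanical consequence of Proposition \ref{thm:error_ls_fixed_graph} and Lemma \ref{lem:var_term}: substitute the prescribed value $\lambda = (T/S_T)^{2/3}$ into the high-probability bound \eqref{eq:error_ls_tail} and keep track of the dependence on $T$ and $S_T$. First I would combine the two earlier results. Plugging the estimate of Lemma \ref{lem:var_term} into the variance term of \eqref{eq:error_ls_tail}, and using $\delta \in (0,e^{-1})$ so that $1 + 4\log(1/\delta) \leq 5\log(1/\delta)$, one gets
$$\|\hat z - z^*\|_2^2 \lesssim \Big(\tfrac{1}{\alpha_{n-1}^2}\vee 1\Big)\lambda S_T + \sigma^2 \alpha_1 \log(1/\delta)\Big( \tfrac{T\sqrt{n-1}}{\alpha_{n-1}^{3/2}\sqrt{\lambda}} + \tfrac{T(n-1)}{\alpha_{n-1}^2 + \alpha_{n-1}(n-1)\lambda} + \sum_{j=1}^{n-1}\tfrac{1}{\alpha_j^2}\Big),$$
which holds with probability at least $1-\delta$ for every $\lambda>0$.

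Next I would evaluate each term at $\lambda = (T/S_T)^{2/3}$. The bias term becomes $(\tfrac{1}{\alpha_{n-1}^2}\vee 1)\,\lambda S_T = (\tfrac{1}{\alpha_{n-1}^2}\vee 1)\,T^{2/3}S_T^{1/3}$, and the first variance summand becomes $\tfrac{\sqrt{n-1}}{\alpha_{n-1}^{3/2}}\cdot\tfrac{T}{\sqrt{\lambda}} = \tfrac{\sqrt{n-1}}{\alpha_{n-1}^{3/2}}\,T^{2/3}S_T^{1/3}$; these two are precisely the pieces assembled into the leading term $T^{2/3}S_T^{1/3}\big(\tfrac{1}{\alpha_{n-1}^2}\vee 1 + \tfrac{\sigma^2\alpha_1\sqrt{n-1}}{\alpha_{n-1}^{3/2}}\log(1/\delta)\big)$ of the statement. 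The sum $\sum_{j=1}^{n-1}\alpha_j^{-2}$ does not depend on $T$ and contributes the stated additive constant $\sigma^2\alpha_1\log(1/\delta)\sum_{j=1}^{n-1}\alpha_j^{-2}$.

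The only point needing a short argument is the remaining summand $\tfrac{T(n-1)}{\alpha_{n-1}^2 + \alpha_{n-1}(n-1)\lambda}$. Since $S_T = o(T)$ we have $\lambda = (T/S_T)^{2/3} = \omega(1)$ as $T\to\infty$, so for $T$ large enough $\alpha_{n-1}(n-1)\lambda \geq \alpha_{n-1}^2$ and this summand is $\lesssim \tfrac{T}{\alpha_{n-1}\lambda} = \tfrac{1}{\alpha_{n-1}}\,T^{1/3}S_T^{2/3}$; comparing with the leading term, $T^{1/3}S_T^{2/3}/(T^{2/3}S_T^{1/3}) = (S_T/T)^{1/3} = o(1)$, so it is dominated by $T^{2/3}S_T^{1/3}$ and is absorbed (up to a constant depending only on $n$) into the leading term. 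Collecting the three contributions yields exactly the claimed bound. There is no real obstacle here — this step is just the bookkeeping of which quantities are $\omega(1)$ versus $O(1)$ in $T$ together with the use of ``$T$ large enough'' to dominate the lower-order variance summand; the fact that the exponent $2/3$ is the right one (so that the bias term $\lambda S_T$ and the dominant variance term $T/\sqrt{\lambda}$ balance at $T^{2/3}S_T^{1/3}$) was already established in the discussion preceding the theorem.
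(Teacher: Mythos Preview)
Your proposal is correct and follows exactly the paper's own route: combine Proposition \ref{thm:error_ls_fixed_graph} with Lemma \ref{lem:var_term}, plug in $\lambda=(T/S_T)^{2/3}$, and use $S_T=o(T)$ together with ``$T$ large enough'' to absorb the lower-order variance summand $\tfrac{T(n-1)}{\alpha_{n-1}^2+\alpha_{n-1}(n-1)\lambda}$ into the leading $T^{2/3}S_T^{1/3}$ term. If anything, you spell out the absorption of that middle summand more explicitly than the paper does.
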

The following corollary states the order of the estimation error under different smoothness regimes. 
\begin{corollary}\label{cor:error_ls_fixed_graph}
Assume that $S_T = O(T^\gamma)$ for some $\gamma<1$. If $\lambda=\rev{\sigma^{\frac43}}(\frac{T}{S_T})^{2/3}$, then it holds with high probability that
$\|\hat z-z^*\|_2^2 = O(\rev{\sigma^{\frac43}}T^{\frac{2+\gamma}3}\red{\vee \rev{\sigma^2}}).$
%\Big(\frac{1}{\alpha_{n-1}^2}\vee1+\frac{\alpha_1\sigma^2 \sqrt{n-1}}{\alpha^{3/2}_{n-1}}\log{(1/\delta)}\Big).
%\end{equation*}
In particular, 
\begin{equation*}
\|\hat z-z^*\|_2^2=
\begin{cases}
%O(T)\text{ if }S_T=O(T),\\
O(\rev{\sigma^{\frac43}}T^{\frac23})\text{ if }S_T=O(1) \text{ and } \lambda=\rev{\sigma^{\frac43}}T^{2/3},\\
O(\rev{\sigma^{\frac43}}T^{\frac13})\text{ if }S_T=O(\frac1T) \text{ and } \lambda=\rev{\sigma^{\frac43}}T^{4/3}.\\
\end{cases}
\end{equation*}
\end{corollary}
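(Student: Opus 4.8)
The goal is to prove Corollary \ref{cor:error_ls_fixed_graph}, which simply instantiates the bound of Theorem \ref{thm:error_ls_fixed_graph_2} under the polynomial smoothness regime $S_T = O(T^\gamma)$ with $\gamma < 1$. Since all the hard work (the explicit bias-variance decomposition, the spectral sums, and the choice of $\lambda$) has already been carried out, the plan is essentially to track the powers of $T$ in each term of the displayed bound of Theorem \ref{thm:error_ls_fixed_graph_2}, treating $n$, $\sigma$, $\delta$, and the spectral quantities $\alpha_1, \alpha_{n-1}, (\alpha_j)_{j}$ of the fixed graph $G_0$ as constants that do not depend on $T$.

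First I would substitute $S_T = O(T^\gamma)$ into the two leading factors of the Theorem \ref{thm:error_ls_fixed_graph_2} bound. The dominant term is $T^{2/3} S_T^{1/3} \big(\tfrac{1}{\alpha_{n-1}^2}\vee 1 + \tfrac{\sigma^2 \alpha_1 \sqrt{n-1}}{\alpha_{n-1}^{3/2}} \log(1/\delta)\big)$; since the parenthesised factor is a positive constant (in $T$), this is $O\big(T^{2/3} \cdot T^{\gamma/3}\big) = O\big(T^{(2+\gamma)/3}\big)$. The trailing term $\sigma^2 \alpha_1 \log(1/\delta) \sum_{j=1}^{n-1} \alpha_j^{-2}$ is entirely $T$-independent, hence $O(1)$. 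Combining, $\|\hat z - z^*\|_2^2 = O\big(T^{(2+\gamma)/3} \vee 1\big)$; the $\vee 1$ is needed only when $\gamma$ is so negative that $(2+\gamma)/3 < 0$, in which case the constant term dominates. This already requires $T$ large enough so that $\lambda = (T/S_T)^{2/3} = \omega(1)$, which holds since $\gamma < 1$ forces $T/S_T \to \infty$; this is exactly the regime in which Theorem \ref{thm:error_ls_fixed_graph_2} applies.

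Then I would specialise to the two named cases. For $S_T = O(1)$ we have $\gamma = 0$, so $(2+\gamma)/3 = 2/3$ and $\lambda = (T/O(1))^{2/3} = T^{2/3}$ (up to constants), giving $\|\hat z - z^*\|_2^2 = O(T^{2/3})$. For $S_T = O(1/T)$ we have $\gamma = -1$, so $(2+\gamma)/3 = 1/3$ and $\lambda = (T / O(1/T))^{2/3} = (T^2)^{2/3} = T^{4/3}$, giving $\|\hat z - z^*\|_2^2 = O(T^{1/3})$. In both cases $(2+\gamma)/3 > 0$ so the $\vee 1$ is vacuous and can be dropped.

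There is essentially no obstacle here: the corollary is a direct arithmetic specialisation of Theorem \ref{thm:error_ls_fixed_graph_2}, and the only thing requiring a word of care is checking that the hypotheses of that theorem (namely $S_T = o(T)$, which follows from $\gamma < 1$, and $T$ large enough so that $\lambda = \omega(1)$) are met under the corollary's assumptions, and that the $T$-independent additive term is correctly absorbed into the $\vee 1$. I would state these verifications explicitly and then simply read off the exponents.
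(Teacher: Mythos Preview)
Your proposal is correct and is exactly the intended argument: the corollary is an immediate arithmetic specialisation of Theorem \ref{thm:error_ls_fixed_graph_2}, and the paper itself does not spell out a separate proof. Your care in verifying that $\gamma<1$ ensures $S_T=o(T)$ and $\lambda=\omega(1)$, and in noting that the $T$-independent additive term forces the $\vee 1$, is precisely what is needed.
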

%
%\hemant{edited till here}
%\hemant{Some correction needed in above  corollary. Note that when $S_T = o(1/T^2)$ then the bound is $O(1)$ as can be seen from the theorem.}

\subsection{Any sequence of connected graphs}\label{sec:evoling_Gt_ls}
We now treat the case where the comparison graphs $G_k$ can differ from one another across time, but we assume that $G_k$ is connected for all $k \in \set{0,\dots,T}$. The main idea is to use the results from Section \ref{sec:warm_up_ls} and the fact that the Laplacian of every connected graph can be bounded (in the Loewner order) by the Laplacian of the complete graph, up to a scalar factor. 

\paragraph{A Loewner bound.} If graph $G_k$ is connected for all $k \in \set{0,\dots,T}$, then there exists a sequence $\beta_0,\cdots,\beta_T$ of strictly positive real numbers such that 
\begin{equation}\label{eq:lowner_bound_0}
  L_k \succcurlyeq  \beta_k \Lcom,
\end{equation}
where $\Lcom = CC^T$ is the Laplacian matrix of $K_n$, the complete graph on $n$ vertices, and $L_k$ is the Laplacian of the graph $G_k$. Notice that when $G_k$ is connected, we can always choose $\beta_k(n-1)$ to be the Fiedler eigenvalue (the second-smallest eigenvalue\footnote{It is known for a fact that between all the connected graphs on $n$ vertices, the one that has the smallest Fiedler value is the path graph, which provides a lower bound of order $n^{-2}$ on $\beta_k(n-1)$, for all $k \in \{0,\cdots,T\}$.}) of $L_k$. Denoting $D_\beta = \diag(\beta_0,\dots,\beta_T)$, we  have the following semi-definite bound for the stacked Laplacian $L$ 
\begin{equation}\label{eq:lowner_bound}
L \succcurlyeq D_\beta\otimes \Lcom \succcurlyeq \min_{0\leq k\leq T}{\beta_k}\cdot (I_{T+1}\otimes \Lcom) := \min_{0\leq k\leq T}{\beta_k} \tilde{L},
\end{equation}
which implies that 
\begin{equation}\label{eq:lowner_bound_2}
L(\lambda) \succcurlyeq (D_\beta+\lambda MM^T)\otimes \Lcom \succcurlyeq \big(\min_{0\leq k\leq T}{\beta_k}\cdot I_{T+1}+\lambda MM^T\big)\otimes \Lcom.
\end{equation}
Let $\beta_k=\lamin(L_k)/(n-1)$, where $\lamin(L_k)$ denotes the smallest nonzero eigenvalue of $L_k$. Then, $\min_{0\leq k\leq T}{\beta_k} = \frac{\lamin(L)}{n-1}$, where $\lamin(L)$ is the smallest nonzero eigenvalue of $L$, and \eqref{eq:lowner_bound_2} implies
\begin{equation} \label{eq:loewner_bd_3}
L(\lambda) \succcurlyeq \Big(\frac{1}{n-1}\lamin(L)\cdot I_{T+1}+\lambda MM^T\Big)\otimes \Lcom := \tilde{L}(\lambda).
\end{equation}
Using these observations, we arrive at the following error bound which is a generalization of Proposition \ref{thm:error_ls_fixed_graph} to the general case of evolving graphs. Its proof can be found in Section \ref{sec:proof_prop2}.
\begin{proposition}\label{thm:error_ls_evol_graph}
Assume that $G_k$ is connected for all $k = 0,\dots,T$. Then under Assumption \ref{assump:smooth}, the estimator $\hat z$ given by \eqref{eq:pen_ls_estimator} satisfies with probability larger than $1-\delta$
\begin{equation*}
    \|\est z-z^*\|_2^2 \lesssim \red{\frac{1}{\lamin^2(L)} \lambda^2 n S_T} + \sigma^2\|L\|_2(1+4\log(1/\delta))\Big(\sum^{T-1}_{k=0}\frac{n-1}{(\lamin(L)+(n-1)\lambda\mu_k)^2}+\frac{n-1}{\lamin^2(L)}\Big).
\end{equation*}
\end{proposition}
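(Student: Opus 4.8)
The plan is to follow the same route used for Proposition~\ref{thm:error_ls_fixed_graph} in the non-evolving case, but to replace the exact spectral decomposition of $L(\lambda)$ (which is no longer available when the graphs differ) by the Loewner lower bound $L(\lambda) \succcurlyeq \tilde L(\lambda)$ established in \eqref{eq:loewner_bd_3}. Starting from the error decomposition \eqref{eq:error_ls}, namely
\[
\|\hat z - z^*\|_2^2 \lesssim \|\Ldag Q(y-x^*)\|_2^2 + \lambda^2\|\Ldag E^\top E z^*\|_2^2,
\]
I would bound the bias and variance terms separately. For the bias term, since $z^*$ lies in the range of $L(\lambda)$ and $\|\Ldag\|_2 = 1/\lamin(L(\lambda))$, I would use $\lamin(L(\lambda)) \geq \lamin(L)$ (which follows because $L(\lambda) = L + \lambda E^\top E \succcurlyeq L$ on the orthogonal complement of the common null space $\spn\{e_k\otimes\ones_n\}$), together with $\|E^\top E z^*\|_2 \le \|E\|_2 \|E z^*\|_2$ and $\|Ez^*\|_2^2 \le S_T$ from Assumption~\ref{assump:smooth}; a crude bound $\|E\|_2^2 = \|MM^\top\|_2 \|CC^\top\|_2 \lesssim n$ (up to the $T$-independent spectral norm of the path Laplacian, which is $O(1)$) then yields the claimed $\frac{1}{\lamin^2(L)}\lambda^2 n S_T$ term. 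The slight subtlety is making sure one only inverts $L(\lambda)$ on the right subspace, i.e. that $Q(y-x^*)$ and $E^\top E z^*$ have no component in $\calN(L(\lambda))$; the latter follows from block-centering and Lemma~\ref{lem:rank_Laplacian_reg}, the former from the structure of $Q$ (each column sums to zero).

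For the variance term $\|\Ldag Q(y-x^*)\|_2^2 = \|\Ldag Q\epsilon\|_2^2$, I would write its expectation as $\sigma^2 \Tr(Q^\top \Ldag{}^2 Q) = \sigma^2 \Tr(\Ldag{}^2 QQ^\top) = \sigma^2\Tr(\Ldag{}^2 L)$, bound $\Tr(\Ldag{}^2 L) \le \|L\|_2 \Tr(\Ldag{}^2)$, and then use the Loewner bound: since $L(\lambda) \succcurlyeq \tilde L(\lambda)$ on the relevant subspace (and they share the same null space), we get $\Ldag \preccurlyeq \tilde L^\dagger(\lambda)$, hence $\Tr(\Ldag{}^2) \le \Tr(\tilde L^\dagger(\lambda)^2)$. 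The matrix $\tilde L(\lambda) = (\tfrac{1}{n-1}\lamin(L) I_{T+1} + \lambda MM^\top)\otimes \Lcom$ has an explicit spectral decomposition in terms of the eigenpairs $(\mu_k,u_k)$ of $MM^\top$ and the nonzero eigenvalue $n-1$ of $\Lcom$ with multiplicity $n-1$, giving
\[
\Tr(\tilde L^\dagger(\lambda)^2) = \sum_{k=0}^{T-1}\frac{n-1}{(\lamin(L) + (n-1)\lambda\mu_k)^2} + \frac{n-1}{\lamin^2(L)},
\]
which matches the bracketed sum in the statement. The high-probability (rather than in-expectation) bound then comes from a concentration inequality for quadratic forms / sums of squared subgaussians (Hanson--Wright, or the same Bernstein-type tail used to derive \eqref{eq:error_ls_tail} in Proposition~\ref{thm:error_ls_fixed_graph}), producing the $(1 + 4\log(1/\delta))$ factor; this step is essentially identical to the non-evolving case since the variance proxy is $\sigma^2$ and only the deterministic matrix changed.

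I expect the main obstacle to be bookkeeping around the null spaces and the Loewner-monotonicity of the pseudo-inverse: the inequality $A \succcurlyeq B \succcurlyeq 0 \Rightarrow A^\dagger \preccurlyeq B^\dagger$ requires $A$ and $B$ to have the same range, so I must verify that $L(\lambda)$ and $\tilde L(\lambda)$ both have null space exactly $\spn\{e_k\otimes\ones_n\}_{k=0}^T$ — for $L(\lambda)$ this needs $G_U$ connected (Lemma~\ref{lem:rank_Laplacian_reg}, automatic here since each $G_k$ is connected), and for $\tilde L(\lambda)$ it follows from $\lamin(L)>0$ and the factorization. A second, more routine point is the factor-of-$n$ accounting in the bias term: one needs $\|E\|_2$ and $\|L\|_2$ controlled independently of $T$, using that the path graph Laplacian $MM^\top$ has spectral norm at most $4$; I would state this cleanly so the $n$ and $\lamin(L)$ dependence in the final bound is transparent. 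Everything else — the triangle inequality split, the trace manipulations, and the tail bound — is a direct transcription of the non-evolving argument with $\tilde L(\lambda)$ in place of the exact $L(\lambda)$.
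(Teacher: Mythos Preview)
Your proposal is correct and follows essentially the same route as the paper: the bias term is bounded exactly as in the first part of Lemma~\ref{lem:bound_bias_non_evol} via $\|L^\dagger(\lambda)\|_2\leq 1/\lamin(L)$, $\|E\|_2^2\lesssim n$ and Assumption~\ref{assump:smooth}, and the variance term is handled as in Lemma~\ref{lem:bound_var_non_evol} using the Hsu--Kakade--Zhang tail bound \cite{hsu2012tail} together with the Loewner comparison $L(\lambda)\succcurlyeq\tilde L(\lambda)$ (same null space) to pass from $\Tr(L^{\dagger 2}(\lambda))$ to $\Tr(\tilde L^{\dagger 2}(\lambda))$. Your explicit attention to the shared null space of $L(\lambda)$ and $\tilde L(\lambda)$, and to the fact that $A\preccurlyeq B$ implies $\Tr(A^2)\leq\Tr(B^2)$ for p.s.d.\ matrices, fills in steps the paper leaves implicit.
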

\red{The main difference between the previous result and Proposition \ref{thm:error_ls_fixed_graph} is that here we obtain a bound of order $O(\lambda^2S_T)$ for the bias term, while Proposition \ref{thm:error_ls_fixed_graph} gives a bound of order $O(\lambda S_T)$ for the same term. This will have an impact on the final error rate given in Theorem \ref{thm:error_ls_evol_graph_2} below.}

We choose $\lambda$ by proceeding as in Section \ref{sec:warm_up_ls}. \rev{Indeed, we obtain the bound
\begin{equation} \label{eq:varterm_bd_evol_case}
    \sum^{T-1}_{k=0}\frac{n-1}{(\lamin(L)+(n-1)\lambda\mu_k)^2} \lesssim \frac{T\sqrt{n-1}}{\lamin(L)^{3/2}\sqrt{\lambda}}
\end{equation}
in the same manner as in \eqref{eq:proof_intermed}.} Plugging it in Proposition \ref{thm:error_ls_evol_graph}, we \rev{deduce $\lambda = \sigma^{\frac45} \big(\frac{T}{S_T}\big)^{\frac{2}{\red{5}}}$ to be the optimal choice.} This then leads to the following theorem which corresponds to a formal version of Theorem \ref{thm:main_result_smooth}. 
%
%\hemant{there is something missing here, since the proof of Theorem 2 is not written. Need to add what the analogous bound to Lemma 2 is, and then check if the theorem below has right bound.}
%
\begin{theorem}\label{thm:error_ls_evol_graph_2}
Under the assumptions of Proposition \ref{thm:error_ls_evol_graph} and choosing $\lambda = \rev{\sigma^{\frac45}} \big(\frac{T}{S_T}\big)^{\frac{2}{\red{5}}}$, it holds with probability larger than $1-\delta$
\begin{equation*}
    \|\est{z}-z^*\|_2^2\lesssim \rev{\sigma^{\frac85}} T^{\red{\frac45}}S^{\red{\frac15}}_T\left(\frac{n}{\lamin^2(L)} + \frac{\sqrt{n-1} \|L\|_2 \log{(1/\delta)}}{\lamin^{3/2}(L)}\right) + \frac{(n-1) \|L\|_2\sigma^2 \log{(1/\delta)}}{\lamin^{2}(L)}.
    %+\frac{T^{\frac13}S_T^{\frac23}}{\lamin^2(L)}.
\end{equation*}
Consequently, if $S_T = O(T^\gamma)$ for some $\gamma < 1$, then $\|\est z-z^*\|_2^2 = O(\rev{\sigma^{\frac85}} T^{\frac{\red{4}+\gamma}{\red{5}}}\red{\vee \rev{\sigma^2}})$.
%\begin{equation*}
%\|\est z-z^*\|_2^2\lesssim T^{\frac{2+\gamma}3}\Big(\frac{1}{\lamin^2(L)}\vee1+\frac{\|L\|_2\sigma^2 \log{(1/\delta)}}{\sqrt{n-1}\lamin^{3/2}(L)}\Big),
%\end{equation*}
In particular, 
\begin{equation*}
\|\hat z-z^*\|_2^2=
\begin{cases}
%O(T)\text{ if }S_T=O(T),\\
O(\rev{\sigma^{\frac85}}T^{\red{\frac45}})\text{ if }S_T=O(1) \text{ and } \lambda=\rev{\sigma^{\frac45}}T^{2/\red{5}},\\
O(\rev{\sigma^{\frac85}}T^{\red{\frac35}})\text{ if }S_T=O(\frac1T) \text{ and } \lambda=\rev{\sigma^{\frac45}}T^{4/\red{5}}.\\
\end{cases}
\end{equation*}
\end{theorem}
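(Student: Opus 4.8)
The plan is to derive Theorem~\ref{thm:error_ls_evol_graph_2} by combining the error bound of Proposition~\ref{thm:error_ls_evol_graph} with the variance-simplification of Lemma~\ref{lem:var_term}, then optimizing over $\lambda$. First I would substitute the conclusion of Proposition~\ref{thm:error_ls_evol_graph}: the error splits into a bias term of order $\frac{\lambda^2 n S_T}{\lamin^2(L)}$ and a variance term $\sigma^2\|L\|_2(1+4\log(1/\delta))\big(\sum_{k=0}^{T-1}\frac{n-1}{(\lamin(L)+(n-1)\lambda\mu_k)^2}+\frac{n-1}{\lamin^2(L)}\big)$. The inner sum over $k$ is exactly the object controlled by Lemma~\ref{lem:var_term} (after identifying $\alpha_{n-1}$ with $\lamin(L)$ and $\alpha_1$ with $\|L\|_2$, and noting the $j$-sum collapses to the factor $n-1$ here since the spectrum of $\Lcom$ contributing is just the single value $n-1$ with multiplicity $n-1$): it gives $\sum_{k=0}^{T-1}\frac{1}{(\lamin(L)+(n-1)\lambda\mu_k)^2} \lesssim \frac{T\sqrt{n-1}}{\lamin^{3/2}(L)\sqrt{\lambda}} + \frac{T(n-1)}{\lamin^2(L)+\lamin(L)(n-1)\lambda}$.

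Next I would assume, as in Section~\ref{sec:warm_up_ls}, that $\lambda = \omega(1)$ as $T\to\infty$, so the second piece of the Lemma~\ref{lem:var_term} bound is $O(T/\lambda)$, which is dominated by the first piece $O(T/\sqrt\lambda)$; the residual term $\frac{(n-1)\|L\|_2\sigma^2\log(1/\delta)}{\lamin^2(L)}$ is a constant in $T$ and gets carried through as the additive $\Psi'_{\mathrm{LS}}$ term. Collecting the $T$-dependence, the bound reads $\|\hat z - z^*\|_2^2 \lesssim \lambda^2 S_T + \frac{T}{\sqrt\lambda} + \mathrm{const}$, where the constants absorb $n$, $\sigma$, $\lamin(L)$, $\|L\|_2$, and $\log(1/\delta)$. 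To optimize, I balance $\lambda^2 S_T \asymp T\lambda^{-1/2}$, i.e. $\lambda^{5/2} \asymp T/S_T$, giving $\lambda = (T/S_T)^{2/5}$; plugging back, each of the two balanced terms becomes $(T/S_T)^{4/5}S_T = T^{4/5}S_T^{1/5}$, which is the claimed rate. One should check $\lambda = (T/S_T)^{2/5} = \omega(1)$ holds precisely when $S_T = o(T)$, and that $S_T = o(T)$ also ensures the bias term does not dominate trivially (when $S_T = \Theta(T)$ everything is $O(T)$ and the statement is vacuous). Finally, specializing $S_T = O(T^\gamma)$ gives the balanced term $T^{4/5}(T^\gamma)^{1/5} = T^{(4+\gamma)/5}$, taking a $\vee 1$ to cover the regime where the additive constant dominates, and the two displayed cases $\gamma = 0$ and $\gamma = -1$ follow by direct substitution.

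The contrast with the non-evolving case (Corollary~\ref{cor:error_ls_fixed_graph}, rate $T^{2/3}S_T^{1/3}$) comes solely from the bias exponent: Proposition~\ref{thm:error_ls_evol_graph} produces $O(\lambda^2 S_T)$ rather than the $O(\lambda S_T)$ of Proposition~\ref{thm:error_ls_fixed_graph}, which shifts the balance point from $\lambda^{3/2}\asymp T/S_T$ to $\lambda^{5/2}\asymp T/S_T$ and degrades $2/3$ to $4/5$. I would remark on this explicitly since the paragraph preceding the theorem already flags it. I do not expect any serious obstacle here — this is a bookkeeping argument chaining two already-proven results and a one-variable optimization; the only points requiring care are (i) correctly matching the eigenvalue notation ($\alpha_1 \leftrightarrow \|L\|_2$, $\alpha_{n-1}\leftrightarrow \lamin(L)$) when invoking Lemma~\ref{lem:var_term}, which was stated for the fixed-graph spectral decomposition, and (ii) tracking which terms are genuinely $T$-dependent versus constant so the final $\Psi_{\mathrm{LS}}$ and $\Psi'_{\mathrm{LS}}$ are assembled correctly. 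The genuinely hard work — establishing the Loewner bound \eqref{eq:loewner_bd_3} and pushing it through the pseudo-inverse to get Proposition~\ref{thm:error_ls_evol_graph} — has already been done upstream.
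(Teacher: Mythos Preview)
Your proposal is correct and follows essentially the same approach as the paper: combine Proposition~\ref{thm:error_ls_evol_graph} with Lemma~\ref{lem:var_term} (identifying $\alpha_{n-1}\leftrightarrow\lamin(L)$), drop the $O(T/\lambda)$ piece under $\lambda=\omega(1)$, balance $\lambda^2 S_T$ against $T/\sqrt\lambda$ to get $\lambda=(T/S_T)^{2/5}$, and carry the $\frac{(n-1)\|L\|_2\sigma^2\log(1/\delta)}{\lamin^2(L)}$ term as the additive constant. The only minor slip is in your displayed application of Lemma~\ref{lem:var_term}: the left-hand side you wrote lacks the $(n-1)$ prefactor that your right-hand side already contains, but this is cosmetic and does not affect the argument or the final bound.
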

This theorem reveals the dependence on the parameters of the problem, up to absolute constants. In terms of the notation introduced in the statement of Theorem \ref{thm:main_result_smooth}, we have %(since $S_T = O(T)$ always holds) 
\begin{align*}
\Psi_{\operatorname{LS}}\big(n,\sigma, \delta\big)&=\rev{\sigma^{\frac85}} \Big(\frac{n}{\lamin^2(L)} + \frac{\|L\|_2  \sqrt{n-1} \log{(1/\delta)}}{\lamin^{3/2}(L)}\Big ),\\
\red{\Psi'_{\operatorname{LS}}\big(n,\sigma, \delta\big)}&=\red{
\frac{(n-1) \|L\|_2\sigma^2 \log{(1/\delta)}}{\lamin^{2}(L)}
}.
\end{align*}
\red{The error rates presented in Theorem \ref{thm:error_ls_evol_graph_2} are not optimal and they do not match those obtained in Theorem \ref{thm:error_ls_fixed_graph} for the case of non-evolving graphs. We believe that the correct bound for the square error in this case should be, as in Theorem \ref{thm:error_ls_fixed_graph}, of order $O(T^{2/3}S^{1/3}_T\vee 1)$. In particular, in the case $S_T=O(1)$ we obtain a rate $O(T^{4/5})$ which should be $O(T^{2/3})$. As will be seen in the proof of Proposition \ref{thm:error_ls_evol_graph}, the main technical difficulty to obtain what we believe to be the correct bound lies in the fact that the Loewner ordering is not preserved after taking matrix squares, i.e., \eqref{eq:loewner_bd_3} does not automatically imply  $L^2(\lambda)\succcurlyeq \tilde{L}^2(\lambda)$. On the other hand, under the following assumption on the stacked Laplacian of the comparison graphs, we will obtain the same rate as in the non-evolving case. 
}
\red{
\begin{assumption}\label{assump:technical_lower_bound}
There exists a constant $\kappa>1$ such that for all $\lambda>0$ it holds 
\begin{equation}\label{eq:assump_technical}
\frac1{\kappa}\big(L^2+\lambda^2(E^TE)^2\big)+\lambda(E^TEL+LE^TE)\succcurlyeq 0.
\end{equation}
\end{assumption}
}
\red{Altough technical in nature, we argue that Assumption \ref{assump:technical_lower_bound} is reasonable and rather mild. Indeed, notice that given any pair of symmetric matrices (not necessarily p.s.d) $A,B$ of the same size we have that $\frac1{\kappa}(A^2+B^2)+AB+BA \succcurlyeq 0$ for any $\kappa\leq 1$. Notice also that in the non-evolving case we have that $E^TEL+LE^TE \succcurlyeq 0$  (as can be seen from \eqref{eq:decomp_spec} and \eqref{eq:decomp_L}) which implies in that case the stronger condition that \eqref{eq:assump_technical} is satisfied for every $\lambda>0$ and every $ \kappa>1$ (hence in particular Assumption \ref{assump:technical_lower_bound} holds). It is possible that Assumption \ref{assump:technical_lower_bound} holds for an absolute constant $\kappa>1$ for any stacked Laplacian $L$ of connected graphs $G_k$, but we are unable to prove it. The following theorem shows that under Assumption \ref{assump:technical_lower_bound} we obtain error rates that match those of Theorem \ref{thm:error_ls_fixed_graph_2}.}
\red{
\begin{theorem}\label{thm:error_ls_evol_graph_3}
Under the hypotheses of Proposition \ref{thm:error_ls_evol_graph}, suppose that Assumption \ref{assump:technical_lower_bound} holds. Then choosing $\lambda = \rev{\sigma^{\frac43}}\big(\frac{T}{S_T}\big)^{\frac{2}3}$, it holds with probability greater than $1-\delta$ that
\begin{equation*}
    \|\est{z}-z^*\|_2^2\lesssim\frac{\rev{\sigma^{\frac43}}\okap}{\okap-1} T^{\frac23}S^{\frac13}_T\left(\left(\frac{1}{\lamin^2(L)}\vee1 \right) + \frac{\sqrt{n-1} \|L\|_2\sigma^2 \log{(1/\delta)}}{\lamin^{3/2}(L)}\right) + \frac{(n-1) \|L\|_2\sigma^2 \log{(1/\delta)}}{\lamin^{2}(L)}, 
    %+\frac{T^{\frac13}S_T^{\frac23}}{\lamin^2(L)}.
\end{equation*}
\[\text{ where } \quad \okap:=\max\Big\{\kappa>1 : \quad \frac1{\kappa}\big(L^2+\lambda^2(E^TE)^2\big)+\lambda(E^TEL+LE^TE)\succcurlyeq 0\Big\}.\]
Consequently, if $S_T = O(T^\gamma)$ for some $\gamma < 1$, then $\|\est z-z^*\|_2^2 = O(\rev{\sigma^{\frac43}} T^{\frac{2+\gamma}{3}}\vee \rev{\sigma^2})$.
%\begin{equation*}
%\|\est z-z^*\|_2^2\lesssim T^{\frac{2+\gamma}3}\Big(\frac{1}{\lamin^2(L)}\vee1+\frac{\|L\|_2\sigma^2 \log{(1/\delta)}}{\sqrt{n-1}\lamin^{3/2}(L)}\Big),
%\end{equation*}
In particular, 
\begin{equation*}
\|\hat z-z^*\|_2^2=
\begin{cases}
%O(T)\text{ if }S_T=O(T),\\
O(\rev{\sigma^{\frac43}} T^{\frac23})\text{ if }S_T=O(1) \text{ and } \lambda=\rev{\sigma^{\frac43}} T^{2/3},\\
O(\rev{\sigma^{\frac43}} T^{\frac13})\text{ if }S_T=O(\frac1T) \text{ and } \lambda=\rev{\sigma^{\frac43}} T^{4/3}.\\
\end{cases}
\end{equation*}
\end{theorem}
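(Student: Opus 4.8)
The plan is to start from the deterministic bound \eqref{eq:error_ls}, $\|\hat z-z^*\|_2^2\lesssim\|\Ldag Q(y-x^*)\|_2^2+\lambda^2\|\Ldag E^\top E z^*\|_2^2$, to treat the variance term exactly as in Proposition \ref{thm:error_ls_evol_graph} (this part is unchanged), and to use Assumption \ref{assump:technical_lower_bound} to upgrade its $O(\lambda^2 S_T)$ bias bound to an $O(\lambda S_T)$ one; balancing the two at $\lambda=(T/S_T)^{2/3}$ then yields the rate. For the variance term, writing $\epsilon=y-x^*$, a subgaussian quadratic‑form tail bound applied to $\epsilon^\top(Q^\top\Ldag^2 Q)\epsilon$ reduces matters to $\Tr(Q^\top\Ldag^2 Q)=\Tr(\Ldag^2 L)\le\|L\|_2\Tr(\Ldag^2)$. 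Then the Loewner bound \eqref{eq:loewner_bd_3}, $L(\lambda)\succcurlyeq\tilde L(\lambda)$, together with the fact that $\calN(L(\lambda))=\calN(\tilde L(\lambda))=\spn\{e_k\otimes\ones_n\}$ (Lemma \ref{lem:rank_Laplacian_reg} and connectivity of the $G_k$; $\tilde L(\lambda)$'s first Kronecker factor is positive definite) and the elementary fact that $0\preccurlyeq A\preccurlyeq B$ implies $\Tr(A^2)\le\Tr(B^2)$ (since $\Tr(B^2)-\Tr(A^2)=\Tr(B(B-A))+\Tr((B-A)A)\ge0$), gives $\Tr(\Ldag^2)\le\Tr(\tilde L^\dagger(\lambda)^2)=\sum_{k=0}^{T-1}\frac{n-1}{(\lamin(L)+(n-1)\lambda\mu_k)^2}+\frac{n-1}{\lamin^2(L)}$. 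Feeding this into Lemma \ref{lem:var_term} (with $\alpha_{n-1}$ replaced by $\lamin(L)$) yields, with probability $\ge1-\delta$, a variance contribution $\lesssim\frac{\sqrt{n-1}\,\|L\|_2\sigma^2\log(1/\delta)}{\lamin^{3/2}(L)}\cdot\frac{T}{\sqrt\lambda}+\frac{(n-1)\|L\|_2\sigma^2\log(1/\delta)}{\lamin^2(L)}$ once $\lambda=\omega(1)$.

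The heart of the proof is the bias term, where Assumption \ref{assump:technical_lower_bound} enters. Put $N:=E^\top E$, $w:=Nz^*$, and $\calN:=\spn\{e_k\otimes\ones_n\}$; under the connectivity hypothesis $\calN=\calN(L)=\calN(L(\lambda))$, and $w\in\range(N)\subseteq\calN^\perp$. I would rewrite the bias as $\lambda^2\|\Ldag w\|_2^2=\lambda^2\,w^\top\!\big(L(\lambda)^2\big)^\dagger w$ and expand $L(\lambda)^2=L^2+\lambda^2N^2+\lambda(NL+LN)$. Assumption \ref{assump:technical_lower_bound} with $\kappa=\okap$ then gives $L(\lambda)^2=(1-\tfrac1{\okap})(L^2+\lambda^2N^2)+\big[\tfrac1{\okap}(L^2+\lambda^2N^2)+\lambda(NL+LN)\big]\succcurlyeq(1-\tfrac1{\okap})(L^2+\lambda^2N^2)$. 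Both $L(\lambda)^2$ and $L^2+\lambda^2N^2$ are positive semidefinite with range exactly $\calN^\perp$, so this Loewner inequality inverts on the common range to $\big(L(\lambda)^2\big)^\dagger\preccurlyeq\frac{\okap}{\okap-1}(L^2+\lambda^2N^2)^\dagger$. It remains to bound $\lambda^2\,w^\top(L^2+\lambda^2N^2)^\dagger w$, for which I would use the variational identity $w^\top(L^2+\lambda^2N^2)^\dagger w=\sup_{x\notin\calN}\frac{(w^\top x)^2}{\|Lx\|_2^2+\lambda^2\|Nx\|_2^2}$ (valid since $w\in\range(L^2+\lambda^2N^2)$): for the numerator, Cauchy–Schwarz and Assumption \ref{assump:smooth} give $(w^\top x)^2=\langle Ez^*,Ex\rangle^2\le\|Ez^*\|_2^2\,\|Ex\|_2^2\le S_T\,(x^\top Nx)$; for the denominator, restricting to $x\in\calN^\perp$ (which suffices, as $\calN\subseteq\calN(L)\cap\calN(N)$), AM–GM together with $\|Lx\|_2\ge\lamin(L)\|x\|_2$ and $x^\top Nx\le\|x\|_2\|Nx\|_2$ give $\|Lx\|_2^2+\lambda^2\|Nx\|_2^2\ge2\lambda\|Lx\|_2\|Nx\|_2\ge2\lambda\lamin(L)\,(x^\top Nx)$. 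Hence the supremum is at most $\frac{S_T}{2\lambda\lamin(L)}$, and the bias is at most $\frac{\okap}{\okap-1}\cdot\frac{\lambda S_T}{2\lamin(L)}$.

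Adding the two bounds gives $\|\hat z-z^*\|_2^2\lesssim\frac{\okap}{\okap-1}\frac{\lambda S_T}{\lamin(L)}+\frac{\sqrt{n-1}\,\|L\|_2\sigma^2\log(1/\delta)}{\lamin^{3/2}(L)}\frac{T}{\sqrt\lambda}+\frac{(n-1)\|L\|_2\sigma^2\log(1/\delta)}{\lamin^2(L)}$. The bias–variance sum $\lambda S_T+T/\sqrt\lambda$ is minimized at $\lambda=(T/S_T)^{2/3}$, which is $\omega(1)$ precisely because $S_T=o(T)$ (the regime assumed in the variance step); both the bias and the leading variance term then equal $T^{2/3}S_T^{1/3}$, and bounding $\tfrac1{\lamin(L)}\le\tfrac1{\lamin^2(L)}\vee1$ and $\tfrac{\okap}{\okap-1}\ge1$ rearranges the estimate into the stated form, with the $S_T=O(T^\gamma)$ specializations immediate. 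The step I expect to be the main obstacle is exactly the one Assumption \ref{assump:technical_lower_bound} is designed to handle: the variance‑style comparison $L(\lambda)\succcurlyeq\tilde L(\lambda)$ is useless for the bias because Loewner order is not preserved under squaring, so one genuinely needs a quadratic lower bound $L(\lambda)^2\succcurlyeq(1-\tfrac1{\okap})(L^2+\lambda^2N^2)$ on the right subspace; once that is available, the variational estimate above — whose only nontrivial inputs are Cauchy–Schwarz against $Ez^*$ and the AM–GM/Rayleigh lower bound on $\|Lx\|_2^2+\lambda^2\|Nx\|_2^2$ — closes the argument. (Care is also needed throughout with null spaces and pseudoinverses, but these are fixed once and for all by Lemma \ref{lem:rank_Laplacian_reg} and the connectivity of each $G_k$, which force $\calN(L)=\calN(L(\lambda))=\calN$.)
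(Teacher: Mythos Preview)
Your proof is correct and matches the paper's overall scheme (variance via Lemma \ref{lem:bound_var_non_evol}, bias via Assumption \ref{assump:technical_lower_bound}, then balance at $\lambda=(T/S_T)^{2/3}$), but your treatment of the bias term takes a genuinely different route after the common step $L(\lambda)^2\succcurlyeq(1-1/\okap)(L^2+\lambda^2N^2)$. The paper (second part of Lemma \ref{lem:bound_bias_non_evol}) further lower-bounds $L^2$ by $\frac{\lamin^2(L)}{(n-1)^2}\tilde L^2$, using that $L$ and $\tilde L=I_{T+1}\otimes\Lcom$ commute (each $L_k$ commutes with $\Lcom=nI-\ones\ones^\top$ since $L_k\ones=0$); this makes the right-hand side simultaneously diagonalizable with $E^\top E$, so one can write out the spectral sum $\sum_{k,j}\frac{\lambda^2(n-1)^2\mu_k^2}{\lamin^2(L)+\lambda^2(n-1)^2\mu_k^2}\langle z^*,u_k\otimes a_j\rangle^2$ and finish with the elementary scalar inequality of Claim \ref{claim:elementary}. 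You instead stay with $L^2+\lambda^2N^2$ and use the variational identity $w^\top A^\dagger w=\sup_x (w^\top x)^2/(x^\top Ax)$ together with Cauchy--Schwarz on $\langle Ez^*,Ex\rangle$ and AM--GM plus the Rayleigh bound $\|Lx\|_2\ge\lamin(L)\|x\|_2$ on $\calN^\perp$. Both arguments are valid; yours is more direct (no passage to $\tilde L$ or spectral expansion) and in fact yields the slightly sharper prefactor $1/\lamin(L)$ rather than $1/\lamin^2(L)\vee 1$, which you then relax to match the stated bound. The variance part of your write-up coincides with the paper's.
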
}

\red{The proof of Theorem \ref{thm:error_ls_evol_graph_3} mimics the argument used to prove Theorem \ref{thm:error_ls_evol_graph_2}. It follows from the following bound, which is analogous to the bound in Proposition \ref{thm:error_ls_evol_graph}
\begin{align}
    \|\est z-z^*\|_2^2 &\leq \frac{\okap}{\okap-1}\Big(\frac{1}{\lamin^2(L)}\vee 1\Big) \lambda S_T \nonumber \\
    &+ \sigma^2\|L\|_2(1+4\log(1/\delta))\Big(\sum^{T-1}_{k=0}\frac{n-1}{(\lamin(L)+(n-1)\lambda\mu_k)^2}+\frac{n-1}{\lamin^2(L)}\Big). \label{eq:error_bound_analog_a3}
\end{align}
\rev{Then using the bound in \eqref{eq:varterm_bd_evol_case}, the optimal choice for  $\lambda$ follows by an elementary calculation}. The bound \eqref{eq:error_bound_analog_a3} is proven by using the second statement of Lemma \ref{lem:bound_bias_non_evol} (which controls the bias term), and Lemma \ref{lem:bound_var_non_evol} is used to bound the variance term (which is unchanged with respect to Theorem \ref{thm:error_ls_evol_graph_2}). These lemmas can be found in Section \ref{sec:proof_prop2}.
}
%
%\hemant{Some correction needed in above  theorem (as remarked for previous setup). Note that when $S_T = o(1/T^2)$ then the bound is $O(1)$ as can be seen from the theorem. So the bound of Theorem A earlier also needs an additional additive term which will be $O(1)$.}
%

\begin{remark}[Connectedness assumption]\label{rem:connectedness_assum2}
We believe that the analysis can likely be extended to the setup where some of the graphs are disconnected (with the union graph $G_U$ being connected). The main technical difficulty in this case comes from the fact that the Loewner bound \eqref{eq:lowner_bound_0} now does not hold for a strictly positive $\beta_k$ for all $k$. If some of the $\beta_k$'s in \eqref{eq:lowner_bound_0} are allowed to be $0$, then $\min_{0\leq k \leq T}\beta_k=0$ which renders the bound \eqref{eq:lowner_bound_2} to be not useful. In the proof of Lemma \ref{lem:bound_var_non_evol}, we need to have a good lower bound on the eigenvalues of $L(\lambda)$ in order to bound $\Tr\big({L}^{\dagger 2}(\lambda)\big)$, but this appears to be very challenging. Moreover, while the first part of Lemma \ref{lem:bound_bias_non_evol} will remain unchanged, it is  difficult to see how the second part of Lemma \ref{lem:bound_bias_non_evol} can be adapted for this setup.
\end{remark}

\section{Projection method analysis}\label{sec:analysis_proj}
Let us now obtain bounds for the estimation error of the projection method. To prove Theorem \ref{thm:main_result_proj}, we will use an analogous scheme to Section \rev{\ref{sec:analysis_smooth_pen}}, i.e., we build upon the \rev{non-evolving} case, \rev{by considering that} all the comparison graphs are the same. Before proceeding, it will be useful to introduce some notation specific to the projection approach that will be used in the analysis. %
\paragraph{Notation.} 
Let us define the following quantities.
\begin{itemize}
\item $\calL_\tau=\{(k,j)\in\{0,\cdots, T\}\times [n]\text{ s.t }\lambda_j\mu_k\rev{\leq}\tau\}$, where $\tau\rev{\in[0,\infty]}$, corresponds to the indices for the low-frequency part of the spectrum \rev{of $E^TE$. Note that when $\tau=0$, it corresponds to the indices for the nullspace of $E^TE$. On the other hand, when $\tau=\infty$, we have $\calL_\infty~=~\{0,\cdots, T\}\times [n]$}. 
%\hemant{ is $\tau > 0?$}
\item $\calH_\tau=\{0,\cdots, T\}\times [n]\setminus \calL_\tau$ corresponds to the indices for the high-frequency part of the spectrum.

\item $\lfreqsp=\spn\{u_k\otimes a_j\}_{(k,j)\in \calL_\tau}$ is the linear space spanned by the low frequency eigenvectors. $\lfreqproj$ is the projection matrix for $\lfreqsp$.  %\red{the matrix $V_\tau$ has column vectors of the form $\{u_k\otimes a_j\}_{(k,j)\in \calL_\tau}$.}

\item $\hfreqproj$ is the projection matrix for the orthogonal complement of $\calV_\tau$, i.e., $\hfreqsp$. %\hemant{edit here to define the matrix $V_\tau$.}
\end{itemize}
The previous definitions help formalize the description of $\est z$ in \eqref{eq:proj_estimator} and we can write, in matrix notation, 
\begin{equation*}
    \check{z}=L^\dagger Qy \quad \text{ and } \quad  \est{z}=\lfreqproj \check{z}.
\end{equation*}
%
%\hemant{mention if $\check{z}$ is uniquely defined or not} 
Notice that $\calN(E^\top E) \subset \calV_\tau$  and $\check z \perp e_k \otimes \ones_n$ for each $k$. This then implies that 
\begin{equation*}
    \est{z}=\lfreqproj\check{z} \perp (e_k \otimes \ones_n), \quad k=0,\dots,T,
\end{equation*}
which means that $\est z$ is block-wise centered.  
Now the latent strength vector $z^*$ is given as the solution of the linear system $L z = Q x^*$ with $z^*$ assumed to be block-centered. If each $G_k$ is connected, then $z^*$ is uniquely given by 
\begin{equation*}
 z^* = L^{\dagger} Q x^*.   
\end{equation*}
%
%\hemant{add remark on the definition of $z^*$.}
Therefore, since
\begin{equation*}
 z^* =  \lfreqproj z^* +  \hfreqproj z^* =  \lfreqproj L^{\dagger} Q x^* + \hfreqproj z^*,
\end{equation*}
we obtain the following expression for the estimation error 
\begin{equation}\label{eq:error_proj}
\|\est z-z^*\|_2^2=\|\lfreqproj L^\dagger Q(y-x^*) \|_2^2+\|\hfreqproj z^*\|_2^2.
\end{equation}
The first term in the RHS of \eqref{eq:error_proj} is the variance term due to noise that will be controlled by choosing $\tau$ to be suitably large. The second term is the bias which depends on the smoothness of $z^*$ and will be controlled by choosing $\tau$ to be sufficiently small. Hence the optimal choice of $\tau$ will be the one which achieves the right bias-variance trade-off. 

\begin{remark}[Connectedness assumption]\label{rem:connectedness_assum}
\red{The assumption that each $G_k$ is connected enables us to uniquely write the block-centered $z^*$ as $z^* = L^{\dagger} Q x^*$. In case some of the graphs were disconnected, then it is not necessary that $L^{\dagger} Q x^*$ satisfies the smoothness condition in Assumption \ref{assump:smooth}, even though it is a solution of $L z = Qx^*$. It is unclear how to proceed with the analysis in this case, and we leave it for future work. }
\end{remark}
%
%\ea{I'm not 100\% sure if is there is major problem in the projection setting in the case where some graphs are disconnected.}
%
%
%\hemant{Perhaps more detail about this last point can be added as a remark.}
%\eglantine{Need to be updated. In the disconnected case, $z^*$ is not uniquely defined and we can't be sure that $z^*=L^\dagger Qx^*$ is also smooth.}

\subsection{Non-evolving case}\label{sec:warm_up_proj}
The following theorem provides a tail bound for the estimation error of the projection method under Assumption \ref{assump:fixed_graph} (\rev{non-evolving} case). 
\begin{theorem}\label{thm:error_proj_fixed_graph}
For any $\delta\in(0,e^{-1})$ and \rev{$\tau\geq 0$}, it holds under Assumptions \ref{assump:smooth} and \ref{assump:fixed_graph} that with probability at least $1-\delta$
\begin{equation*}
\|\est z-z^*\|_2^2\lesssim \rev{S_T\Big(\frac{1}{\tau}\wedge \frac{1}{(n-1)\mu_{T-1}}\Big)}+\alpha_1\sigma^2\log{(1/\delta)} \left( \frac{T+1}{\pi}\sqrt{\frac{\tau}{n-1}}
\sum^{n-1}_{j=1}\frac{1}{\alpha^2_j}+\red{\sum^{n-1}_{j=1}\frac{1}{\alpha^2_j}}\right).\end{equation*}
\end{theorem}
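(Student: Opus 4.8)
\textbf{Proof plan for Theorem \ref{thm:error_proj_fixed_graph}.}
The starting point is the error decomposition \eqref{eq:error_proj}, which in the non-evolving case reads $\|\est z - z^*\|_2^2 = \|\lfreqproj L^\dagger Q(y-x^*)\|_2^2 + \|\hfreqproj z^*\|_2^2$. I would bound the two terms separately, using the explicit spectral decomposition available under Assumption \ref{assump:fixed_graph}. Recall $L = I_{T+1}\otimes L_0 = \sum_{k=0}^{T}\sum_{j=1}^{n-1}\alpha_j (u_k u_k^\top \otimes a_j a_j^\top)$, so $L^\dagger = \sum_{k,j}\alpha_j^{-1}(u_k u_k^\top \otimes a_j a_j^\top)$, and $E^\top E = MM^\top \otimes CC^\top$ has eigenpairs $((n-1)\mu_k, u_k \otimes a_j)$ together with the zero eigenvalue on $\spn\{e_k\otimes\ones_n\}$. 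Hence the index set $\calL_\tau$ becomes $\{(k,j): (n-1)\mu_k < \tau\} \cup \{(k,n): k=0,\dots,T\}$ and $\lfreqproj = \sum_{(k,j)\in\calL_\tau} u_k u_k^\top \otimes a_j a_j^\top$ in this shared eigenbasis.

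\textbf{Bias term.} Since $z^*$ is block-centered, $\hfreqproj z^* = \sum_{(k,j)\in\calH_\tau} (u_k u_k^\top \otimes a_j a_j^\top) z^*$ and every pair in $\calH_\tau$ has $(n-1)\mu_k \geq \tau$, i.e. $\mu_k \geq \tau/(n-1)$. The plan is to relate $\|\hfreqproj z^*\|_2^2$ to the smoothness budget via $z^{*\top} E^\top E z^* \leq S_T$ (Assumption \ref{assump:smooth}). Writing $z^* = \sum_{k,j} c_{k,j}\, u_k\otimes a_j$ (the $a_n$-components vanish by centering), one has $S_T \geq z^{*\top} E^\top E z^* = \sum_{k,j}(n-1)\mu_k c_{k,j}^2 \geq \frac{\tau}{1} \sum_{(k,j)\in\calH_\tau} c_{k,j}^2 \cdot \frac{(n-1)\mu_k}{(n-1)\mu_k}$; more precisely on $\calH_\tau$ the factor $(n-1)\mu_k \geq \tau$, so $\sum_{(k,j)\in\calH_\tau} c_{k,j}^2 \leq S_T/\tau$, giving $\|\hfreqproj z^*\|_2^2 \leq S_T/\tau$. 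This is the $S_T/\tau$ term.

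\textbf{Variance term.} For the noise term, $Q(y-x^*) = Q\epsilon$ where $\epsilon$ stacks the independent subgaussian $\epsilon_{ij}(t)$, and $L^\dagger Q$ has the block form $I_{T+1}\otimes(L_0^\dagger Q_0)$. So $\lfreqproj L^\dagger Q\epsilon$ is a subgaussian vector and $\|\lfreqproj L^\dagger Q\epsilon\|_2^2$ concentrates around $\sigma^2\,\Tr(\lfreqproj L^\dagger Q Q^\top L^\dagger \lfreqproj) = \sigma^2\,\Tr(\lfreqproj L^\dagger L L^\dagger \lfreqproj)$ since $QQ^\top = L$. Using the shared eigenbasis, $\Tr(\lfreqproj L^{\dagger} L L^{\dagger}\lfreqproj) = \sum_{(k,j)\in\calL_\tau, j\leq n-1}\alpha_j^{-1}$, and the $j=n$ terms contribute nothing because $L^\dagger$ annihilates $a_n$. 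Splitting $\calL_\tau$ according to whether $\mu_k = 0$ (only $k=T$, contributing $\sum_{j=1}^{n-1}\alpha_j^{-1}$, the last parenthesized term) or $\mu_k>0$ with $(n-1)\mu_k<\tau$, I then need to count how many indices $k\in\{0,\dots,T-1\}$ satisfy $\mu_k < \tau/(n-1)$. The eigenvalues of the path-graph Laplacian $MM^\top$ on $T+1$ vertices are $\mu_k = 2(1-\cos(\pi k/(T+1)))$ ordered decreasingly, so the small ones satisfy $\mu_{T-m} \asymp (\pi m/(T+1))^2$; hence $\#\{k : \mu_k < \tau/(n-1)\} \lesssim \frac{T+1}{\pi}\sqrt{\tau/(n-1)}$. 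Multiplying the count by $\sum_{j=1}^{n-1}\alpha_j^{-1}$ gives the $\frac{T+1}{\pi}\sqrt{\tau/(n-1)}\sum_j \alpha_j^{-2}$-shaped contribution — I would double-check the exponent on $\alpha_j$, since $\Tr(L^\dagger L L^\dagger) = \sum \alpha_j^{-1}$ gives $\sum\alpha_j^{-1}$ not $\sum\alpha_j^{-2}$; the $\alpha_1$ factor out front and the $\alpha_j^{-2}$ inside presumably come from using $\alpha_1\sum\alpha_j^{-2} \geq \sum\alpha_j^{-1}$ as a (loose) upper bound to homogenize with the bias term, which I would state as a Lemma. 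Finally, a Hanson–Wright / subgaussian quadratic-form tail bound (as used implicitly in Proposition \ref{thm:error_ls_fixed_graph}) upgrades the expectation to an $O(\log(1/\delta))$ high-probability bound, and combining the two terms yields the claim.

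\textbf{Main obstacle.} The routine part is the subgaussian concentration; the delicate part is the sharp count of small path-Laplacian eigenvalues below the threshold $\tau/(n-1)$ and tracking the exact constant $\frac{1}{\pi}$, together with reconciling the $\alpha_j^{-1}$ that naturally appears in the trace with the $\alpha_1\sum_j\alpha_j^{-2}$ written in the statement — i.e. making sure the chain of inequalities $\sum_j\alpha_j^{-1} \leq \alpha_1\sum_j\alpha_j^{-2}$ (valid since $\alpha_j^{-1}\leq \alpha_1\alpha_j^{-2}$ when $\alpha_1\geq\alpha_j$) is the intended one and does not lose anything essential for the subsequent choice $\tau = (S_T/T)^{2/3}$.
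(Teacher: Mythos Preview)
Your proposal is correct and follows essentially the same architecture as the paper: the same error decomposition \eqref{eq:error_proj}, the same bias bound $\|\hfreqproj z^*\|_2^2 \leq S_T/\tau$ via the spectral expansion of $E^\top E$ (the paper's Lemma \ref{lem:proj_smooth_part}), the same Hanson--Wright/Hsu--Kakade--Zhang concentration for the variance term, and the same count of small path-Laplacian eigenvalues via $\sin x \geq x/2$ on $[0,\pi/2]$.

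There is one genuine, if minor, difference worth flagging. For the variance term, the paper does \emph{not} use your identity $QQ^\top = L$; instead it bounds $\Tr(QQ^\top L^{\dagger}\lfreqproj L^{\dagger}) \leq \|QQ^\top\|_2\,\Tr(\lfreqproj L^{\dagger 2}) = \alpha_1 \cdot |\calA|\cdot \sum_{j=1}^{n-1}\alpha_j^{-2}$ directly via $\Tr(AB)\le \|A\|_2\Tr(B)$. That is how the $\alpha_1$ and the $\alpha_j^{-2}$ appear simultaneously in the statement. Your route --- exploiting $QQ^\top = L$ so that $\Tr(\lfreqproj L^\dagger L L^\dagger) = \Tr(\lfreqproj L^\dagger) = |\calA|\cdot\sum_{j=1}^{n-1}\alpha_j^{-1}$ --- is in fact \emph{tighter}, and your proposed last step $\sum_j \alpha_j^{-1}\le \alpha_1\sum_j\alpha_j^{-2}$ is exactly the inequality needed to recover the stated form. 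So you have correctly diagnosed both the source of the apparent mismatch and its resolution; nothing is lost for the subsequent choice $\tau=(S_T/T)^{2/3}$. The paper's version has the mild advantage that the operator-norm--trace split generalizes verbatim to the evolving-graph case (where $QQ^\top=L$ still holds but $L$ and $\lfreqproj$ no longer share an eigenbasis, so your exact trace computation would not go through).
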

The proof is detailed in Section \ref{sec:proof_thm_proj_evolv}. We now show how to optimize the choice of the regularization parameter $\tau$. 

\paragraph{Choice of $\tau$.} \rev{Consider the case $S_T=0$ first. From Theorem \ref{thm:error_proj_fixed_graph}, it is clear that, in that case, the optimal choice is to set $\tau$ to zero. On the other hand, }notice that Theorem \ref{thm:error_proj_fixed_graph} can be written using asymptotic notation (hiding all the parameters except for $T$ \rev{and $\sigma$}) as 
%\eglantine{Notation inconsistency : our estimator is denoted $\est z$ not $\tilde z$}

\begin{equation*}
\|\est z-z^*\|_2^2=O\big(\frac{S_T}{\tau}+\rev{\sigma^2}T\sqrt{\tau}\big),
\end{equation*}
and hence the optimal choice of $\tau$ is given by $\tau=\rev{\sigma^{-\frac43}}\Big(\frac{S_T}{T}\Big)^{\frac23}$. \rev{ Notice that when $\sigma=0$ and $S_T\neq 0$, the optimal choice is $\tau=\infty$, which gives a zero error as expected. In the case $\sigma=S_T=0$, the estimation error is also zero as is obvious from the bound above. Below, we focus on the interesting case where at least one of $\sigma, S_T$ is non-zero.} The following result then follows directly from Theorem~\ref{thm:error_proj_fixed_graph}.  
\begin{corollary}\label{cor:proj_evol}
Choosing $\tau=\rev{\sigma^{-\frac43}}\Big(\frac{S_T}{T}\Big)^{\frac23}$, then for any $\delta\in(0,e^{-1})$, it holds with probability greater than $1-\delta$ that
\begin{equation*}
\|\est z-z^*\|_2^2\lesssim \rev{\sigma^{\frac43}}T^{\frac23}S_T^{\frac13}\Big(1+\frac{\log{(1/\delta)}}{\pi\sqrt{n-1}}\sum^{n-1}_{j=1}\frac{\alpha_1}{\alpha^2_{j}}\Big)+\red{\sigma^2\log{(1/\delta)}\sum^{n-1}_{j=1}\frac{\alpha_1}{\alpha^2_j}}.
\end{equation*}
Consequently, if $S_T = O(T^\gamma)$ for some $\gamma \leq 1$, then $\|\est z-z^*\|_2^2 = O(\rev{\sigma^{\frac43}}T^{\frac{2+\gamma}3}\red{\vee \rev{\sigma^2}})$.
%\begin{equation*}
%\|\tilde z-z^*\|_2^2\lesssim T^{\frac{2+\gamma}3}\Big(1+\frac{\sigma^2\log{1/\delta}}{\pi\sqrt{n-1}}\sum^{n-1}_{j=1}\frac{\alpha_1}{\alpha^2_{j}}\Big),
%\end{equation*}
%
In particular, 
\begin{equation*}
\|\tilde z-z^*\|_2^2=
\begin{cases}
%O(T)\text{ if }S_T=O(T),\\
O(\rev{\sigma^{\frac43}}T^{\frac23})\text{ if }S_T=O(1) \text{ and } \tau = \rev{\sigma^{-\frac43}}T^{-2/3},\\
O(\rev{\sigma^{\frac43}}T^{\frac13})\text{ if }S_T=O(\frac1T) \text{ and } \tau = \rev{\sigma^{-\frac43}}T^{-4/3}.\\
\end{cases}
\end{equation*}
\end{corollary}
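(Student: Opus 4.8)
The plan is to obtain Corollary \ref{cor:proj_evol} as a direct consequence of Theorem \ref{thm:error_proj_fixed_graph} by substituting the prescribed threshold and simplifying. Recall that Theorem \ref{thm:error_proj_fixed_graph} bounds $\|\tilde z - z^*\|_2^2$, with probability at least $1-\delta$, by a quantity of the form
\[ \frac{S_T}{\tau} + \alpha_1\sigma^2\log(1/\delta)\Big(\tfrac{T+1}{\pi}\sqrt{\tfrac{\tau}{n-1}}\sum_{j=1}^{n-1}\tfrac{1}{\alpha_j^2} + \sum_{j=1}^{n-1}\tfrac{1}{\alpha_j^2}\Big). \]
Viewing the right-hand side as a function of $\tau>0$ with all other quantities held fixed, it has the shape $a/\tau + b\sqrt{\tau} + c$ with $a = S_T$, $b \asymp T$ (since $T+1 \asymp T$), and $c$ independent of $\tau$. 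Minimizing $a/\tau + b\sqrt{\tau}$ over $\tau>0$ gives the first-order condition $\tau^{3/2} \asymp a/b$, so the optimal threshold is of order $(S_T/T)^{2/3}$, which is exactly the choice made in the corollary.

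First I would substitute $\tau = (S_T/T)^{2/3}$ and simplify the two $\tau$-dependent terms: $S_T/\tau = S_T(T/S_T)^{2/3} = T^{2/3}S_T^{1/3}$ and $(T+1)\sqrt{\tau} \asymp T(S_T/T)^{1/3} = T^{2/3}S_T^{1/3}$. Collecting terms, the bound of Theorem \ref{thm:error_proj_fixed_graph} becomes $T^{2/3}S_T^{1/3}\big(1 + \tfrac{\sigma^2\log(1/\delta)}{\pi\sqrt{n-1}}\sum_{j=1}^{n-1}\tfrac{\alpha_1}{\alpha_j^2}\big) + \sigma^2\log(1/\delta)\sum_{j=1}^{n-1}\tfrac{\alpha_1}{\alpha_j^2}$ up to an absolute constant, which is precisely the displayed inequality in Corollary \ref{cor:proj_evol}.

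Finally, for the asymptotic statements I would simply read off the dependence on $T$: if $S_T = O(T^\gamma)$ with $\gamma \le 1$, then $T^{2/3}S_T^{1/3} = O(T^{(2+\gamma)/3})$, while the remaining term is $O(1)$ in $T$, hence $\|\tilde z - z^*\|_2^2 = O(T^{(2+\gamma)/3}\vee 1)$, where the $\vee 1$ absorbs the $T$-independent remainder and also covers the regime $\gamma \le -2$ in which the leading term is itself $O(1)$. The two special cases follow by taking $\gamma = 0$ (giving $\tau = T^{-2/3}$ and rate $T^{2/3}$) and $\gamma = -1$ (giving $\tau = T^{-4/3}$ and rate $T^{1/3}$). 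There is no real obstacle here — all the probabilistic and spectral work has already been carried out in Theorem \ref{thm:error_proj_fixed_graph}; the only care needed is in the bookkeeping of which problem parameters ($n$, $\sigma$, $\delta$, and the eigenvalues $\alpha_j$) are hidden inside $\lesssim$, and in checking that the prescribed $\tau$ is admissible and optimal up to absolute constants.
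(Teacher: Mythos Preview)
Your proposal is correct and follows essentially the same approach as the paper: the paper also derives the optimal $\tau$ by balancing the $S_T/\tau$ and $T\sqrt{\tau}$ terms in Theorem~\ref{thm:error_proj_fixed_graph}, then substitutes $\tau = (S_T/T)^{2/3}$ and reads off the asymptotic rates. There is nothing to add; the corollary is indeed an immediate consequence of the theorem.
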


\subsection{Any sequence of connected graphs}
Similar to the analysis for the least-squares approach in Section \ref{sec:evoling_Gt_ls}, we will use the semi-definite bound \eqref{eq:lowner_bound} to pass from the case of non-evolving graphs to the case where the graphs can differ (but are connected). Given the bound in the Loewner order \eqref{eq:lowner_bound}, we see that the role played by the eigenpairs $(\alpha_j,a_j)$ in Section \ref{sec:warm_up_proj} is now replaced by the eigenpairs of the Laplacian of the complete graph. Information about the comparison graphs is also encoded in the scalar $\min_{0\leq k \leq T}{\beta_k}$ (or in $\lamin(L)$), where $\beta_k(n-1)$ is equal to the Fiedler eigenvalue of $L_k$. The following bound for the estimation error is analogous to Theorem \ref{thm:error_proj_fixed_graph} and Corollary \ref{cor:proj_evol}, and formalizes the statement of Theorem \ref{thm:main_result_proj}.
\begin{theorem}\label{thm:error_proj_evol}
Assume that $G_k$ is connected for all $k\in\{0,\dots,T\}$ and let $\lamin(L)$ be the smallest nonzero eigenvalue of $L$. Then for any $\delta\in(0,e^{-1})$ and \rev{$\tau\geq 0$}, it holds under Assumption \ref{assump:smooth} and with probability larger than $1-\delta$ that
\begin{equation}\label{eq:thm_proj_evol_1}
\|\est z-z^*\|_2^2\lesssim \rev{S_T\left(\frac{1}{\tau}\wedge \frac{1}{(n-1)\mu_{T-1}}\right)}+\frac{ (T+1)\sigma^2\log{(1/\delta)}}{\pi}\sqrt{\frac{\tau}{n-1}}
\frac{\norm{L}_2}{\lamin^2(L)}+\red{\sigma^2\log{(1/\delta)}\frac{\norm{L}_2}{\lamin^2(L)}}.
%\frac1{\min_{0\leq j\leq T}{\beta_j}^2}.
\end{equation}
Choosing $\tau=\rev{\sigma^{-\frac43}}\big(\frac{S_T}{T}\big)^{2/3}$ leads to the bound
\begin{equation}\label{eq:thm_proj_evol_2}
    \|\est z-z^*\|_2^2\lesssim \rev{\sigma^{\frac43}}T^{\frac23}S_T^{\frac13}\Big(1+\frac{\log{(1/\delta)}}{\pi\sqrt{n-1}}\frac{\norm{L}_2}{\lamin^2(L)}\Big)+\red{\sigma^2\log{(1/\delta)}\frac{\norm{L}_2}{\lamin^2(L)}}.%\sum^{n-1}_{j=1}\frac{\alpha_1}{\alpha^2_{j}}\Big).
\end{equation}
Thus, if $S_T = O(T^\gamma)$ for some $\gamma \leq \rev{\sigma^2}$, then $\|\tilde z-z^*\|_2^2 = O(\rev{\sigma^{\frac43}}T^{\frac{2+\gamma}3}\red{\vee1})$. In particular, 
\begin{equation*}
\|\est z-z^*\|_2^2=
\begin{cases}
%O(T)\text{ if }S_T=O(T),\\
O(\rev{\sigma^{\frac43}}T^{\frac23})\text{ if }S_T=O(1) \text{ and } \tau = \rev{\sigma^{-\frac43}}T^{-2/3},\\
O(\rev{\sigma^{\frac43}}T^{\frac13})\text{ if }S_T=O(\frac1T) \text{ and } \tau =\rev{\sigma^{-\frac43}} T^{-4/3}.\\
\end{cases}
\end{equation*}
\end{theorem}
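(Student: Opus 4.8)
The plan is to mimic the passage from the non-evolving case (Theorem \ref{thm:error_proj_fixed_graph}) to the general connected case, exactly as was done for the least-squares estimator in Section \ref{sec:evoling_Gt_ls}, but now controlling the two terms in the error decomposition \eqref{eq:error_proj}, namely the variance term $\|\lfreqproj L^\dagger Q(y-x^*)\|_2^2$ and the bias term $\|\hfreqproj z^*\|_2^2$. Note that the projection operators $\lfreqproj,\hfreqproj$ are built from the eigenvectors $u_k\otimes a_j$ of $MM^\top\otimes \Lcom$ (the complete-graph quantities), so unlike the analysis of $\hat z$ these operators do not themselves depend on the actual graphs $G_k$; the graph-dependence enters only through $L^\dagger$ and through $z^*$.

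\textbf{Step 1 (bias term).} The bias term $\|\hfreqproj z^*\|_2^2$ is handled verbatim as in Theorem \ref{thm:error_proj_fixed_graph}: expanding $z^*$ in the eigenbasis $\{u_k\otimes a_j\}$ and using that, on $\hfreqsp$, the eigenvalue $\lambda_j\mu_k = (n-1)\mu_k$ of $E^\top E$ is at least $\tau$, Assumption \ref{assump:smooth} ($z^{*\top}E^\top E z^* \le S_T$) gives $\|\hfreqproj z^*\|_2^2 \le S_T/\tau$. Since this bound uses only $E^\top E$ and not the $G_k$'s, it is unchanged in the evolving case.

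\textbf{Step 2 (variance term).} Write $\xi := Q(y-x^*)$, which is $Q$ applied to the stacked noise vector; its covariance is $\sigma^2 L$ (since $QQ^\top = L$ blockwise). The variance term is $\|\lfreqproj L^\dagger \xi\|_2^2$. Taking expectations, $\expec \|\lfreqproj L^\dagger \xi\|_2^2 = \sigma^2 \Tr(\lfreqproj L^\dagger L L^\dagger \lfreqproj) = \sigma^2\Tr(\lfreqproj L^\dagger \lfreqproj)$, and one controls $L^\dagger$ using the Loewner bound \eqref{eq:loewner_bd_3}: $L(\lambda)$-type reasoning is replaced by $L \succcurlyeq \frac{\lamin(L)}{n-1}\tilde L = \frac{\lamin(L)}{n-1}(I_{T+1}\otimes\Lcom)$, hence $L^\dagger \preccurlyeq \frac{n-1}{\lamin(L)}\tilde L^\dagger$ on the relevant range, so that on $\lfreqsp$ one picks up a factor $\frac{n-1}{\lamin(L)}$ in place of the $1/\alpha_{n-1}$-type factors, and $\|L\|_2$ in place of $\alpha_1$. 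Counting the low-frequency indices: $(k,j)\in\calL_\tau$ requires $(n-1)\mu_k<\tau$, and since $\mu_k = 2(1-\cos(\pi k/(T+1))) \asymp (\pi k/(T+1))^2$ for small $k$, the number of admissible $k$ is $O\big(\frac{T+1}{\pi}\sqrt{\tau/(n-1)}\big)$, giving exactly the factor appearing in \eqref{eq:thm_proj_evol_1}, multiplied by $(n-1)$ values of $j$ and by $1/\lamin^2(L)$ from the eigenvalue lower bound (plus the separate $u_T$-block contributing the additive $\sigma^2\log(1/\delta)/\lamin^2(L)$). The high-probability upgrade from the expectation follows from a Hanson–Wright / subgaussian quadratic-form concentration inequality, yielding the $\log(1/\delta)$ factor, just as in Theorem \ref{thm:error_proj_fixed_graph}.

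\textbf{Step 3 (optimize $\tau$ and conclude).} Combining Steps 1--2 gives \eqref{eq:thm_proj_evol_1}; writing it asymptotically as $O(S_T/\tau + T\sqrt{\tau})$ and differentiating yields $\tau = (S_T/T)^{2/3}$, which substituted back gives \eqref{eq:thm_proj_evol_2} and the stated corollaries for $S_T = O(T^\gamma)$. The main obstacle I anticipate is Step 2: making sure the Loewner bound \eqref{eq:loewner_bd_3} can be used cleanly inside the trace/quadratic-form estimate. The subtlety — flagged by the authors themselves for the least-squares case — is that the Loewner order is not preserved under squaring, so bounding $\Tr(\lfreqproj L^{\dagger 2}\lfreqproj)$ (which is what the variance really is, since $\Tr(\lfreqproj L^\dagger L L^\dagger\lfreqproj)$ needs $L$ in the middle, not $L(\lambda)$) requires care; however, here it works out because $\lfreqproj$ commutes with $\tilde L$ and one only needs $L\succcurlyeq \frac{\lamin(L)}{n-1}\tilde L$ together with $\|L^\dagger\|_2 \le 1/\lamin(L)$ directly, avoiding any squaring of a Loewner inequality — so the variance bound goes through with the honest factors $\|L\|_2$ and $1/\lamin^2(L)$ rather than the weaker rates that plague Theorem \ref{thm:error_ls_evol_graph_2}.
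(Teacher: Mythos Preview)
Your proposal is correct and follows essentially the same three-step structure as the paper's proof: the bias bound $\|\hfreqproj z^*\|_2^2\le S_T/\tau$ is Lemma~\ref{lem:proj_smooth_part} verbatim, the variance bound is Lemma~\ref{lem:proj_noisy_part2} (Hanson--Wright concentration via \cite{hsu2012tail} combined with the Loewner comparison $L\succcurlyeq \frac{\lamin(L)}{n-1}\tilde L$), and the optimization of $\tau$ is identical.

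One small difference worth noting in your variance treatment: you correctly observe early in Step~2 that $\Tr(\Sigma)=\Tr(\lfreqproj L^\dagger L L^\dagger)=\Tr(\lfreqproj L^\dagger)$ via the Moore--Penrose identity $L^\dagger L L^\dagger=L^\dagger$, which is in fact \emph{tighter} than what the paper does --- the paper instead uses the cruder inequality $\Tr(\Sigma)\le\|QQ^\top\|_2\Tr(\lfreqproj L^{\dagger 2})$ and then invokes the fact that $L$ and $\tilde L$ share an eigenbasis (since $\tilde L$ has only two eigenspaces, $\ones_n^\perp$ blockwise and its complement) to pass from $L^{\dagger 2}$ to $\tilde L^{\dagger 2}$. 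Your route would actually give a factor $(n-1)/\lamin(L)$ rather than the stated $1/\lamin^2(L)$, but your write-up then drifts back toward the paper's looser bound (mentioning $\|L\|_2$ and $1/\lamin^2(L)$), so the exposition is slightly muddled between the two computations. Either path works and yields \eqref{eq:thm_proj_evol_1}; just be consistent about which one you are executing.
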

%
%From \eqref{eq:thm_proj_evol_1}, it is easily seen that the choice of $\tau=\big(\frac{S_T}{T}\big)^{2/3}$ is optimal, in analogy to Section \ref{sec:warm_up_proj}. 
In terms of the notation introduced in Theorem \ref{thm:main_result_proj}, we have %
\begin{align*}
\Psi_{\operatorname{Proj}}\big(n,\sigma, \delta\big)~&=~ \rev{\sigma^{\frac43}}\Big(1~+~\frac{\log{(1/\delta)}}{\pi\sqrt{n-1}}\frac{\norm{L}_2}{\lamin^2(L)}\Big), \\
\red{\Psi'_{\operatorname{Proj}}\big(n,\sigma, \delta\big)}~&=~\red{\sigma^2\log{(1/\delta)}\frac{\norm{L}_2}{\lamin^2(L)}}.
\end{align*}
%

% Proofs
\section{Proofs}\label{sec:proofs}
\subsection{Proof of Lemma \ref{lem:var_term}}\label{sec:proof_lem_var_term}
Using the fact that $\alpha_j\geq \alpha_{n-1}$ for all $j\in\{1,\cdots,n-1\}$, and the explicit form of the non-zero eigenvalues of the path graph on $T+1$ vertices (see for e.g. \cite{brouwer2011spectra})
$$\mu_k=4\sin^2{\frac{(T-k)\pi}{2(T+1)}}, \quad k = 0,\dots,T-1;$$ 
we have that 
\begin{align}
\sum^{T-1}_{k=0}\sum^{n-1}_{j=1}\frac1{(\alpha_j+\lambda (n-1)\mu_k)^2}&\leq (n-1)\sum^{T}_{k=1}\frac{1}{(\alpha_{n-1}+4\lambda(n-1)\sin^2{\frac{k\pi}{2(T+1)}})^2} \label{eq:proof_intermed} 
%\\ 
%
%
%&\leq (n-1)\sum^{T}_{k=0}\frac{1}{(\alpha_{n-1}+4\lambda(n-1)\sin^2{\frac{k\pi}{2(T+1)}})}. \nonumber
\end{align}
The last term in the previous inequality can be regarded as a Riemannian sum, and as such it can be bounded by an integral as follows 
\[\frac{1}{T+1}\sum^{T}_{k=1}\frac{1}{(\alpha_{n-1}+4\lambda(n-1)\sin^2{\frac{k\pi}{2(T+1)}})^2}\leq \int^1_0\frac{dx}{(\alpha_{n-1}+4\lambda (n-1)\sin^2{\frac{\pi x}{2}})^2}.\]
Let us focus on the integral term, for which the following bound holds 
\begin{align*}
\int^1_0\frac{dx}{(\alpha_{n-1}+4\lambda (n-1)\sin^2{\frac{\pi x}{2}})^2}&\stackrel{(1)}{=}2\int^{\frac12}_0\frac{dx}{(\alpha_{n-1}+4\lambda (n-1)\sin^2{\pi x})^2}\\
&\stackrel{(2)}{\leq} 2\int^{\frac12}_0\frac{dx}{(\alpha_{n-1}+\lambda (n-1)\pi^2x^2)^2}\\
&\stackrel{(3)}{=}\frac{2}{\pi\alpha^{3/2}_{n-1}\sqrt{\lambda(n-1)}}\operatorname{I}\Big(\frac{\pi}2\sqrt{\frac{\lambda(n-1)}{\alpha_{n-1}}}\Big),
\end{align*}
where $\operatorname{I}(t)=\int^t_0\frac{du}{(1+u^2)^2}$. Equality $(1)$ comes from a change of variables. In $(2)$, we used that $\sin x\geq x/2$ for all $x\in[0,\pi/2]$ and $(3)$ results from the change of variable $u=\sqrt{\frac{\lambda(n-1)}{\alpha_{n-1}}}\pi x$. By an elementary calculation, we verify that $\operatorname{I}(t)=\frac{\arctan t}{2}+\frac{t}{2(1+t^2)}$. Hence, 
\begin{align*}
\int^1_0 \frac{dx}{(\alpha_{n-1}+4\lambda (n-1)\sin^2\pi x)^2}
&\leq \frac{\rev{1}}{\alpha^{3/2}_{n-1}}\frac{\arctan(\frac{\pi}2\sqrt{\frac{\lambda(n-1)}{\alpha_{n-1}}})}{\pi\sqrt{\lambda(n-1)}} +\frac{1}{\rev{2(}\alpha^2_{n-1}+\frac{\pi^2}2 \alpha_{n-1}(n-1)\lambda\rev{)}}\\
&\lesssim \frac{1}{\alpha^{3/2}_n\sqrt{(n-1)\lambda}}+\frac{1}{\alpha^2_{n-1}+\frac{\pi^2}{\rev{4}} \alpha_{n-1}(n-1)\lambda},
\end{align*}
where the last inequality follows from $|\arctan(x)|\leq \frac{\pi}2$ for all $x\in \matR$. \rev{From this, we deduce that \begin{align*} 
\sum^{T-1}_{k=0}\sum^{n-1}_{j=1}\frac1{(\alpha_j+\lambda (n-1)\mu_k)^2}&\lesssim \frac{T\sqrt{n-1}}{\alpha_{n-1}^{3/2}\sqrt{\lambda}}+ \frac{T(n-1)}{\alpha_{n-1}^2+\alpha_{n-1}(n-1)\lambda}\\
&= T(n-1)\left(\frac{1}{\alpha_{n-1}\sqrt{\alpha_{n-1}(n-1)\lambda}}+ \frac{1}{\alpha_{n-1}^2+\alpha_{n-1}(n-1)\lambda}\right)\\
&\lesssim\frac{T(n-1)}{\alpha_{n-1}\sqrt{\alpha_{n-1}(n-1)\lambda}}=\frac{T\sqrt{n-1}}{\alpha_{n-1}^{3/2}\sqrt{\lambda}}.
\end{align*}}
%The stated bound now follows readily.

\subsection{Proof of Proposition \ref{thm:error_ls_fixed_graph}} \label{sec:prop_fix_graph_pen_error}
The proof of Proposition \ref{thm:error_ls_fixed_graph} follows from Lemmas \ref{lem:ls_var_term} and \ref{lem:ls_bias_term} below. 

\begin{lemma}[Bound on the variance term]\label{lem:ls_var_term}
For any $\delta\in (0,e^{-1})$, it holds with probability larger than $1-\delta$ that
\begin{equation*}
    \|L^\dagger (\lambda)Q(y-x^*)\|_2^2\leq \sigma^2 \alpha_1 (1+4\log(1/\delta))\Big(\sum^T_{k=0}\frac{1}{(\alpha_j+\lambda(n-1)\mu_k)^2}+\sum^{n-1}_{j=1}\frac{1}{\alpha_j^2}\Big).
\end{equation*}
\end{lemma}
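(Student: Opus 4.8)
\textbf{Proof plan for Lemma \ref{lem:ls_var_term}.}

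The plan is to exploit the explicit spectral decomposition of $L(\lambda)$ from \eqref{eq:decomp_L_lambda} together with a subgaussian concentration bound for quadratic forms in the noise vector. First I would observe that $\|L^\dagger(\lambda) Q(y-x^*)\|_2^2 = (y-x^*)^\top Q^\top L^{\dagger 2}(\lambda) Q (y-x^*)$, and that $y-x^* = \epsilon$ is a vector of independent centered subgaussian entries with $\psi_2$-norm $\sigma$. Writing $A := Q^\top L^{\dagger 2}(\lambda) Q$, this is a quadratic form $\epsilon^\top A \epsilon$, so the Hanson--Wright inequality (or the simpler one-sided bound for p.s.d. $A$: $\prob(\epsilon^\top A \epsilon > \sigma^2(\Tr A + 2\sqrt{\Tr(A^2)\log(1/\delta)} + 2\|A\|_2\log(1/\delta))) \le \delta$) gives with probability $\ge 1-\delta$ a bound of the shape $\epsilon^\top A \epsilon \lesssim \sigma^2 \Tr(A)(1 + c\log(1/\delta))$, using $\Tr(A^2) \le \|A\|_2 \Tr(A)$ and $\|A\|_2 \le \Tr(A)$. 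The constant $1 + 4\log(1/\delta)$ in the statement is exactly what one gets from the standard form of this inequality for $\delta < e^{-1}$.

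Next I would compute $\Tr(A) = \Tr(Q^\top L^{\dagger 2}(\lambda) Q) = \Tr(QQ^\top L^{\dagger 2}(\lambda)) = \Tr(L L^{\dagger 2}(\lambda))$, using $QQ^\top = L$ (the stacked Laplacian) and cyclicity of the trace. Now in the non-evolving case, $L = I_{T+1}\otimes L_0$ and $L(\lambda)$ share the common eigenbasis $\{u_k \otimes a_j\}$ from the decompositions \eqref{eq:decomp_L}--\eqref{eq:decomp_L_lambda}. On the eigenvector $u_k \otimes a_j$ with $j \le n-1$, $L$ acts by $\alpha_j$ and $L(\lambda)$ by $\alpha_j + (n-1)\lambda\mu_k$ (with $\mu_T = 0$), while on the nullspace vectors $u_k \otimes a_n$ both act by $0$ and contribute nothing to $L L^{\dagger 2}(\lambda)$. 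Hence
\[
\Tr\big(L L^{\dagger 2}(\lambda)\big) = \sum_{k=0}^{T-1}\sum_{j=1}^{n-1} \frac{\alpha_j}{(\alpha_j + (n-1)\lambda\mu_k)^2} + \sum_{j=1}^{n-1}\frac{\alpha_j}{\alpha_j^2}.
\]
Bounding $\alpha_j \le \alpha_1$ in every numerator then yields exactly the claimed sum $\alpha_1\big(\sum_{k}\sum_j (\alpha_j + \lambda(n-1)\mu_k)^{-2} + \sum_j \alpha_j^{-2}\big)$ (the paper's index ``$\sum_{k=0}^T$'' in the first sum should be read together with $\mu_T = 0$, so the $k=T$ term merges into the second sum, which is why both sums appear).

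I do not anticipate a serious obstacle here; the only points requiring care are (i) checking that $\epsilon$ genuinely has independent coordinates with the stated $\psi_2$-norm across all $t$ and all edges so that Hanson--Wright applies with the clean constants, and (ii) correctly handling the nullspace/zero-eigenvalue directions when passing to the pseudo-inverse, making sure $L^{\dagger 2}(\lambda)$ is supported on the range of $L(\lambda)$ and that $Q$ maps into that range (which it does, since $\range(Q) \subseteq \range(L) = \range(L(\lambda))$ under connectivity). The bookkeeping between the two summation forms and verifying the precise numerical constant $1+4\log(1/\delta)$ against whichever concentration inequality is cited is the most error-prone part, but it is routine.
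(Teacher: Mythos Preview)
Your proposal is correct and follows essentially the same route as the paper: apply the subgaussian quadratic-form tail bound (the paper cites \cite[Prop.~1.1]{hsu2012tail}, which gives exactly the $1+4\log(1/\delta)$ constant) to reduce to bounding $\Tr\big(Q^\top L^{\dagger 2}(\lambda)Q\big)$, then evaluate this trace via the explicit spectral decomposition \eqref{eq:decomp_L_lambda}. The only cosmetic difference is in how the factor $\alpha_1$ is extracted: the paper uses the general p.s.d.\ inequality $\Tr(QQ^\top L^{\dagger 2}(\lambda)) \le \|QQ^\top\|_2\,\Tr(L^{\dagger 2}(\lambda))$ and then computes $\Tr(L^{\dagger 2}(\lambda))$ directly, whereas you compute $\Tr(L L^{\dagger 2}(\lambda))$ exactly using the shared eigenbasis and then bound each $\alpha_j$ in the numerator by $\alpha_1$---both arrive at the same expression.
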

\begin{proof}
 Define $\Sigma:=(L^\dagger (\lambda)Q)^\top L^\dagger (\lambda)Q=Q^\top L^{\dagger 2} (\lambda)Q$. Using \rev{\cite[Thm.2.1]{hsu2012tail}}, it holds for any $c>1$
 \begin{align}
    \|L^\dagger (\lambda)Q(y-x^*)\|_2^2&\rev{\leq \sigma^2(\Tr(\Sigma)+2\sqrt{\Tr(\Sigma^2)c}+2\|\Sigma\|_2c)}\nonumber\\ \label{eq:ls_concen_var}
    &\leq \sigma^2(1+4c)\Tr(\Sigma)
 \end{align}
 with probability $1-e^{-c}$ \rev{(using the fact that $\Tr(\Sigma^2) \leq \|\Sigma\|_2 \Tr(\Sigma) \leq (\Tr(\Sigma))^2$)}. On the other hand, 
 
\begin{align*}
    \Tr(\Sigma)&=\Tr(QQ^\top L^{\dagger 2}(\lambda)) \leq \|QQ^\top\|_2\Tr(L^{\dagger 2}(\lambda))
\end{align*}
where we used the fact that given symmetric p.s.d matrices $A,B$ it holds $\Tr(AB)\leq \|A\|_2\Tr(B)$. From the spectral decomposition of $L(\lambda)$ (given by \eqref{eq:decomp_L_lambda}) we deduce that 
\begin{equation*}
    \Tr(L{^{\dagger 2}} (\lambda))=\sum^{T-1}_{k=0}\sum^{n-1}_{j=1}\frac{1}{(\alpha_j+\lambda(n-1)\mu_k)^2}+\sum^{n-1}_{j=1}\frac{1}{\alpha_j^2}.
\end{equation*}
Using $\norm{QQ^\top}_2 = \alpha_1$ and $c = \log(1/\delta)$ concludes the proof.
\end{proof}

\begin{lemma}[Bound on the bias term]\label{lem:ls_bias_term}
We have 
\begin{equation*}
    \lambda^2\|L^\dagger(\lambda)E^\top Ez^*\|_2^2\leq \Big(\frac{1}{\alpha^2_{n-1}}\vee 1\Big)\lambda S_T.
\end{equation*}
\end{lemma}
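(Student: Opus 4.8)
## Proof proposal for Lemma \ref{lem:ls_bias_term}

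The plan is to work with the explicit spectral decompositions in \eqref{eq:decomp_spec} and \eqref{eq:decomp_L_lambda}, since Assumption \ref{assump:fixed_graph} is in force and everything is simultaneously diagonalizable in the tensor eigenbasis $\{u_k \otimes a_j\}$. First I would expand $z^*$ in this basis, writing $z^* = \sum_{k,j} c_{k,j}\, (u_k \otimes a_j)$; since $z^*$ is block-centered we may drop the components along $a_n = \ones_n/\sqrt{n}$, so the sum runs over $j = 1,\dots,n-1$ and $k = 0,\dots,T$. The point is that $\lambda^2 \|L^\dagger(\lambda) E^\top E z^*\|_2^2$ then becomes a weighted sum $\sum_{k,j} c_{k,j}^2 \cdot \rho_{k,j}^2$, where for $k \leq T-1$ the eigenvalue ratio is
\[
\rho_{k,j} = \frac{\lambda (n-1)\mu_k}{\alpha_j + \lambda (n-1)\mu_k},
\]
and for $k = T$ we have $\mu_T = 0$, so that component is annihilated by $E^\top E$ and contributes nothing.

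Next I would bound each ratio $\rho_{k,j}$. There are two natural bounds: trivially $\rho_{k,j} \leq 1$, and also $\rho_{k,j} \leq \frac{\lambda(n-1)\mu_k}{\alpha_j} \leq \frac{\lambda(n-1)\mu_k}{\alpha_{n-1}}$ using $\alpha_j \geq \alpha_{n-1}$. Combining, $\rho_{k,j}^2 \leq \rho_{k,j} \cdot \min\{1, \lambda(n-1)\mu_k/\alpha_{n-1}\} \leq \min\{1, 1/\alpha_{n-1}\}\cdot \lambda(n-1)\mu_k$ — that is, $\rho_{k,j}^2 \leq (\frac{1}{\alpha_{n-1}} \vee 1)\, \lambda (n-1)\mu_k$ (using $\rho_{k,j}\le 1$ for one factor and the other bound for the remaining factor, being slightly careful about which of $1$ or $1/\alpha_{n-1}$ is the larger multiplier). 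Therefore
\[
\lambda^2 \|L^\dagger(\lambda) E^\top E z^*\|_2^2 \;\leq\; \Big(\tfrac{1}{\alpha_{n-1}^2}\vee 1\Big)\,\lambda \sum_{k=0}^{T-1}\sum_{j=1}^{n-1} (n-1)\mu_k\, c_{k,j}^2 .
\]

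Finally I would recognize the remaining double sum as exactly $z^{*\top} E^\top E z^* = \|E z^*\|_2^2$, since $E^\top E = \sum_{k=0}^{T-1}\sum_{j=1}^{n-1}(n-1)\mu_k (u_k u_k^\top \otimes a_j a_j^\top)$ by \eqref{eq:decomp_spec}. By Assumption \ref{assump:smooth} (in the reformulation $\|E z^*\|_2^2 \leq S_T$ noted after the assumption), this is at most $S_T$, giving the claim. The only step requiring a little care — the one I would call the main (minor) obstacle — is getting the constant $(\frac{1}{\alpha_{n-1}^2}\vee 1)$ rather than something messier out of the two competing bounds on $\rho_{k,j}$; concretely, one factor of $\rho_{k,j}$ is bounded by $1$ and the other by $\frac{\lambda(n-1)\mu_k}{\alpha_{n-1}}$, which produces $\frac{1}{\alpha_{n-1}}\lambda(n-1)\mu_k$, and when $\alpha_{n-1} \geq 1$ one instead uses the bound $\rho_{k,j}^2 \le \rho_{k,j} \le \frac{\lambda(n-1)\mu_k}{\alpha_{n-1}} \le \lambda(n-1)\mu_k$; squaring the coefficient to match the $\lambda S_T$ (rather than $\lambda^2$) scaling is where the $\vee 1$ versus the square enters, and I would double-check that the final constant is indeed $\frac{1}{\alpha_{n-1}^2}\vee 1$ as stated. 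Everything else is a routine diagonalization argument.
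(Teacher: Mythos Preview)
Your proposal is correct and follows essentially the same route as the paper: expand in the common eigenbasis $\{u_k\otimes a_j\}$, reduce the bias to $\sum_{k,j}\rho_{k,j}^2\langle z^*,u_k\otimes a_j\rangle^2$ with $\rho_{k,j}=\frac{\lambda(n-1)\mu_k}{\alpha_j+\lambda(n-1)\mu_k}$, and then compare termwise with $\|Ez^*\|_2^2=\sum_{k,j}(n-1)\mu_k\langle z^*,u_k\otimes a_j\rangle^2\le S_T$. The paper packages the pointwise inequality as Claim~\ref{claim:elementary}, namely $\bigl(\tfrac{c}{c+d}\bigr)^2\le\bigl(\tfrac{1}{d^2}\vee 1\bigr)c$ (applied with $c=\lambda(n-1)\mu_k$, $d=\alpha_{n-1}$), which cleanly resolves the constant you flagged as needing a double-check; your two competing bounds $\rho_{k,j}\le 1$ and $\rho_{k,j}\le \lambda(n-1)\mu_k/\alpha_{n-1}$ in fact already give the slightly sharper $\rho_{k,j}^2\le \tfrac{1}{\alpha_{n-1}}\lambda(n-1)\mu_k$, and since $\tfrac{1}{\alpha_{n-1}}\le \tfrac{1}{\alpha_{n-1}^2}\vee 1$ always, the stated constant follows.
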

\begin{proof}
From \eqref{eq:decomp_spec} and \eqref{eq:decomp_L_lambda} it is easy to see that 
\[\lambda^2\|L^\dagger(\lambda)E^\top  Ez^*\|_2^2=\sum^{T-1}_{k=0}\sum^{n-1}_{j=1}\Big(\frac{\lambda(n-1)\mu_k}{\alpha_j+\lambda(n-1)\mu_k}\Big)^2\langle z^*,u_k\otimes a_j\rangle ^2.\]
On the other hand, the smoothness condition in Assumption \ref{assump:smooth} can be written as
\[
\|Ez^*\|_2^2=\sum^{T-1}_{k=0}\sum^{n-1}_{j=1}(n-1)\mu_k\langle z^*,u_k\otimes a_j\rangle ^2\leq S_T.
\]
Defining $b_k:=\sum^{n-1}_{j=1}\langle z^*,u_k\otimes a_j\rangle ^2$ and using that $\alpha_1\geq\cdots\geq\alpha_{n-1}>0$ we obtain 
\begin{align*}
    \lambda^2\|L^\dagger(\lambda)E^\top  Ez^*\|_2^2&\leq\sum^{T-1}_{k=0}\Big(\frac{\lambda(n-1)\mu_k}{\alpha_{n-1}+\lambda(n-1)\mu_k}\Big)^2 b_k,\\
    \lambda\|Ez^*\|_2^2&=\sum^{T-1}_{k=0}\lambda(n-1)\mu_k b_k\leq \lambda S_T.
\end{align*}
We now conclude using the following elementary fact with $c=\lambda (n-1)\mu_k$ and $d=\alpha_{n-1}$. 
\begin{claim}\label{claim:elementary}
Let $c,d$ be strictly positive real numbers, then \[\Big(\frac{c}{d+c}\Big)^2\leq \Big(\frac{1}{d^2}\vee 1\Big)c.\]
\end{claim}
\begin{proof}
 We have two cases. For $c\geq 1$, we have $\Big(\frac{c}{d+c}\Big)^2<1$ and $\Big(\frac{1}{d^2}\vee 1\Big)c\geq 1$, so the inequality is verified. In the case $c<1$, we have \[\Big(\frac{c}{d+c}\Big)^2\leq \frac{c^2}{d^2}\leq \frac{c}{d^2}\leq \Big(\frac{1}{d^2}\vee 1\Big)c.\]
\end{proof}

\end{proof}

\subsection{Proof of Proposition \ref{thm:error_ls_evol_graph}}\label{sec:proof_prop2}
Using \eqref{eq:loewner_bd_3}, the proof of Proposition \ref{thm:error_ls_evol_graph} goes along the same lines as that of Proposition \ref{thm:error_ls_fixed_graph}. It follows directly from Lemmas \ref{lem:bound_bias_non_evol} (the first statement) and \ref{lem:bound_var_non_evol} below, which offer a control of the bias and the variance term,  respectively.

%Lemma \ref{lem:ls_bias_term} is unchanged 
%\hemant{there is something missing here about the bias term, need an analogue of Lemma 4!}

\begin{lemma}\label{lem:bound_bias_non_evol}
\red{
%Let $L$ be stacked Laplacian of the comparison graphs $G_k$, $z^*$ is the ground truth strenght parameter satsifying the smoothness condition $\|Ez^*\|^2\leq S_T$ and $L(\lambda)=L+\lambda E^TE$. 
The following is true.}
\begin{enumerate}
\item \red{It holds that 
\begin{equation*}
    \lambda^2 \|L^\dagger(\lambda)E^TEz^*\|^{2}_2\lesssim \frac{\lambda^2 n}{\lamin^2(L)} S_T.
\end{equation*}}

\item \red{If $G_k$ is connected for all $k = 0,\cdots,T$ and $L$ satisfies Assumption \ref{assump:technical_lower_bound}, then for all $\lambda > 0$ we have 
\begin{equation*}
    \lambda^2 \|L^\dagger(\lambda)E^TEz^*\|^{2}_2\lesssim \frac{\okap}{\okap-1}\Big(\frac{1}{\lamin^2(L)}\vee 1\Big) \lambda S_T
\end{equation*}
where \[\okap:=\max\Big\{\kappa>1 \text{ s.t }\frac1{\kappa}\big(L^2+\lambda^2(E^TE)^2\big)+\lambda(E^TEL+LE^TE)\succcurlyeq 0\Big\}.\]}
\end{enumerate}
\end{lemma}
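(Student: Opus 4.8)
\textbf{Proof plan for Lemma \ref{lem:bound_bias_non_evol}.}

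The two statements both aim to bound $\lambda^2 \|L^\dagger(\lambda)E^TEz^*\|_2^2$, differing only in whether Assumption \ref{assump:technical_lower_bound} is available. For the first statement, the plan is to write the bias term using the spectral calculus of the two symmetric PSD operators $L$ and $\lambda E^TE$. Since these matrices do not commute in general (unlike the non-evolving case), I cannot simultaneously diagonalize them, so instead I would proceed via operator inequalities. Starting from $\lambda^2 \|L^\dagger(\lambda)E^TEz^*\|_2^2 = \lambda^2 z^{*\top}(E^TE)L^{\dagger 2}(\lambda)(E^TE)z^*$, I would use the Loewner bound $L(\lambda) = L + \lambda E^TE \succcurlyeq \lamin(L)\,\Pi$ on the range of $L$ together with $L(\lambda) \succcurlyeq \lambda E^TE$ restricted appropriately, to get a lower bound on $L(\lambda)$ and hence an upper bound on $L^{\dagger 2}(\lambda)$. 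Concretely, $L^{\dagger 2}(\lambda) \preccurlyeq \frac{1}{\lamin^2(L)} I$ on the relevant subspace, so $\lambda^2 z^{*\top}(E^TE)L^{\dagger 2}(\lambda)(E^TE)z^* \leq \frac{\lambda^2}{\lamin^2(L)}\|E^TEz^*\|_2^2$. Then I would bound $\|E^TEz^*\|_2^2 \leq \|E^TE\|_2 \cdot z^{*\top}(E^TE)z^* \leq \|E^TE\|_2 \cdot S_T$, and since $\|E^TE\|_2 = \|MM^\top\|_2 \cdot \|CC^\top\|_2 \leq 4\cdot(n)$ up to constants, this yields the claimed $\frac{\lambda^2 n}{\lamin^2(L)}S_T$ (absorbing the constant $4$ and the factor $\mu_0 \leq 4$ into $\lesssim$).

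For the second statement, the improvement from $\lambda^2 S_T$ to $\lambda S_T$ is exactly what Assumption \ref{assump:technical_lower_bound} is designed to provide, and this mirrors the sharper bound of Lemma \ref{lem:ls_bias_term}. The key identity is $L^2(\lambda) = (L+\lambda E^TE)^2 = L^2 + \lambda^2(E^TE)^2 + \lambda(LE^TE + E^TEL)$. By Assumption \ref{assump:technical_lower_bound}, $\lambda(LE^TE + E^TEL) \succcurlyeq -\frac{1}{\okap}(L^2 + \lambda^2(E^TE)^2)$, so that
\begin{equation*}
L^2(\lambda) \succcurlyeq \Big(1 - \frac{1}{\okap}\Big)\big(L^2 + \lambda^2(E^TE)^2\big) = \frac{\okap-1}{\okap}\big(L^2 + \lambda^2(E^TE)^2\big).
\end{equation*}
Inverting (on the common range, which is $\calN(L(\lambda))^\perp = \calN(L)^\perp$ since all $G_k$ connected) gives $L^{\dagger 2}(\lambda) \preccurlyeq \frac{\okap}{\okap-1}\big(L^2 + \lambda^2(E^TE)^2\big)^\dagger$. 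Hence
\begin{equation*}
\lambda^2 z^{*\top}(E^TE)L^{\dagger 2}(\lambda)(E^TE)z^* \leq \frac{\okap}{\okap-1}\,\lambda^2 z^{*\top}(E^TE)\big(L^2+\lambda^2(E^TE)^2\big)^\dagger(E^TE)z^*.
\end{equation*}

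The remaining work is to bound $\lambda^2 z^{*\top}(E^TE)\big(L^2+\lambda^2(E^TE)^2\big)^\dagger(E^TE)z^*$ by $(\frac{1}{\lamin^2(L)}\vee 1)\lambda S_T$. Here I would split using $L^2 + \lambda^2(E^TE)^2 \succcurlyeq L^2$ and $\succcurlyeq \lambda^2(E^TE)^2$ as needed — the point being that whenever the $\lambda^2(E^TE)^2$ contribution would make the ratio $\big(\frac{\lambda(n-1)\mu_k}{\alpha+\lambda(n-1)\mu_k}\big)^2$-type term large, it is controlled by $\lambda (n-1)\mu_k$ via Claim \ref{claim:elementary} (with $c = \lambda(n-1)\mu_k$, $d = \lamin(L)$), and the sum $\sum_k \lambda(n-1)\mu_k b_k \leq \lambda S_T$ by Assumption \ref{assump:smooth}, exactly as in the proof of Lemma \ref{lem:ls_bias_term}. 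The subtlety is that without commutativity I should phrase this as: $\lambda^2(E^TE)(L^2 + \lambda^2(E^TE)^2)^\dagger(E^TE) \preccurlyeq (\frac{1}{\lamin^2(L)}\vee 1)\,\lambda\, (E^TE)$ as an operator inequality, which can be checked by noting $(L^2 + \lambda^2(E^TE)^2)^\dagger \preccurlyeq (\lamin^2(L) I + \lambda^2(E^TE)^2)^\dagger$ on the range, reducing to a single-operator functional inequality in $E^TE$ that is precisely Claim \ref{claim:elementary} applied eigenvalue-by-eigenvalue of $E^TE$. Then contracting with $z^*$ and invoking $z^{*\top}(E^TE)z^* \leq S_T$ finishes it.

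The main obstacle I anticipate is the non-commutativity: getting from the operator inequality $L^2(\lambda) \succcurlyeq \frac{\okap-1}{\okap}(L^2 + \lambda^2(E^TE)^2)$ to a clean bound on the \emph{scalar} quadratic form $\lambda^2 z^{*\top}(E^TE)L^{\dagger 2}(\lambda)(E^TE)z^*$ requires care, because $(E^TE)$ and $(L^2+\lambda^2(E^TE)^2)^\dagger$ do not commute, so I cannot naively "cancel" them. The cleanest route is to establish the full operator inequality $\lambda^2(E^TE)(L^2+\lambda^2(E^TE)^2)^\dagger(E^TE) \preccurlyeq (\frac{1}{\lamin^2(L)}\vee 1)\lambda(E^TE)$ directly — which does make sense since $(E^TE)$ appears symmetrically on both sides and the middle factor is bounded above by a function of $E^TE$ alone via the Loewner bound on $L^2$ — and only then sandwich with $z^*$. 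Verifying that intermediate operator inequality is the crux, and it ultimately reduces to Claim \ref{claim:elementary}.
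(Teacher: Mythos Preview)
Your proposal is correct and follows essentially the same strategy as the paper's proof. For Part~1, both you and the paper bound $\|L^\dagger(\lambda)\|_2$ by $1/\lamin(L)$ and then use $\|E\|_2^2 \lesssim n$ together with $\|Ez^*\|_2^2 \leq S_T$; the paper splits via submultiplicativity of the operator norm whereas you pass through the quadratic form and use $\|E^TEz^*\|_2^2 \leq \|E^TE\|_2\, z^{*\top}(E^TE)z^*$, but these are equivalent. For Part~2, both arguments first use Assumption~\ref{assump:technical_lower_bound} to obtain $L^2(\lambda) \succcurlyeq \frac{\okap-1}{\okap}\big(L^2+\lambda^2(E^TE)^2\big)$, then replace $L^2$ by a scalar multiple of the projection onto $\calN(L)^\perp$ so that the resulting operator commutes with $E^TE$, and finally apply Claim~\ref{claim:elementary} eigenvalue-by-eigenvalue; the paper phrases the replacement as $L^2 \succcurlyeq \frac{\lamin^2(L)}{(n-1)^2}\tilde L^2$ with $\tilde L = I_{T+1}\otimes \Lcom$ and then expands in the explicit joint eigenbasis $u_k\otimes a_j$, but since $\tilde L^2 = (n-1)^2 P$ on the range this is exactly your $\lamin^2(L)P$ bound. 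The null-space matching needed to invert the Loewner inequalities is handled identically.
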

\begin{proof}
\red{
We have 
\begin{align*}
    \lambda^2 \|L^\dagger(\lambda)E^TEz^*\|^{2}_2&\leq
    \lambda^2\|L^\dagger(\lambda)E^T\|^2_2\|Ez^*\|_2^2\\
    &\leq \lambda^2 \|L^\dagger(\lambda)\|^2_2\|E^T\|^2_2 \|Ez^*\|_2^2 \\
    &\lesssim \frac{\lambda^2 n}{\lamin^2(L)}S_T
\end{align*}
where in the first inequality we used the submultiplicativity of the operator norm and in the last step we used that $\|E^T\|^2_2 \lesssim n$, $\|Ez^*\|^2_2\leq S_T$ by the smoothness assumption, and $\|L^\dagger(\lambda)\|_2^2 \leq \frac1{\lamin^2(L)}$. This proves the first statement. } 

\red{To prove the second part, we assume that Assumption \ref{assump:technical_lower_bound} holds and $\overline{\kappa}$ is the largest value for which it holds. From this it follows that 
\begin{align*}
(L+\lambda E^TE)^2&\succcurlyeq \frac{\okap-1}{\okap}\big(L^2+\lambda^2(E^TE)^2\big)\\
%(L+\lambda E^TE)^2\succcurlyeq \frac{\okap-1}{\okap}
&\succcurlyeq \frac{\okap-1}{\okap}\left(\frac{\lamin^2(L)}{(n-1)^2}\tilde{L}^2+\lambda^2(E^TE)^2\right),
\end{align*}
where in the last inequality we used \eqref{eq:lowner_bound} with $\min_{0\leq k\leq T}{\beta_k} = \frac{\lamin(L)}{n-1}$ and the fact that the eigenvectors of $L$ and $\tilde{L}$ are aligned \rev{(because $\tilde{L}=I_{T+1}\otimes \Lcom$ and the eigenvector of $\Lcom$ can be aligned with those of any Laplacian)}. Since $\lambda > 0$, then by recalling Lemma \ref{lem:rank_Laplacian_reg}, one can verify that  $(L+\lambda E^TE)^2$ and $\tilde{L}^2+\lambda^2 (E^TE)^2$ have the same null space. This in turn  implies \rev{(using Lemma \ref{lem:loewner_pseudo})} that 
\[
\frac{\okap}{\okap-1}\left(\frac{\lamin^2(L)}{(n-1)^2}\tilde{L}^2+\lambda^2(E^TE)^2\right)^\dagger\succcurlyeq(L+\lambda E^TE)^{\dagger 2}=L^{\dagger 2}(\lambda).
\]
Now arguing similarly to the proof of Lemma \ref{lem:ls_bias_term}, we have 
\begin{align*}
    \|L^\dagger(\lambda)E^TEz^*\|^2_2&=z^{*T}E^TEL^{\dagger 2} (\lambda)E^TEz^*\\
    &\leq \frac{\okap}{\okap-1}z^{* T}E^TE\left(\frac{\lamin^2(L)}{(n-1)^2}\tilde{L}^2+\lambda^2(E^TE)^2\right)^\dagger E^TEz^*,\\
    &\stackrel{(1)}{=}\frac{\okap}{\okap-1} \sum^{T-1}_{k=0}\sum^{n-1}_{j=1}\frac{\lambda^2(n-1)^2\mu^2_k}{\lamin^2(L)+\lambda^2(n-1)^2\mu^2_k}\langle z^*,u_k\otimes a_j\rangle ^2\\
    &\stackrel{(2)}{\leq} \frac{\okap}{\okap-1}\Big(\frac{1}{\lamin^2(L)}\vee 1\Big)\sum^{n-1}_{j=1}\lambda(n-1)\mu_k\langle z^*,u_k\otimes a_j\rangle ^2,\\
    &\stackrel{(3)}{\leq} \frac{\okap}{\okap-1}\Big(\frac{1}{\lamin^2(L)}\vee 1\Big) \lambda S_T
\end{align*}
where in $(1)$ we used the known spectral expansion of $\tilde{L}$ and $E^TE$, in $(2)$ we used Claim 
\ref{claim:elementary} (used in the proof of Lemma \ref{lem:ls_bias_term}) with the fact that $\frac{c^2}{c^2+d^2}\leq \big(\frac{c}{c+d}\big)^2$ for $c,d>0$, and in $(3)$ we used the smoothness assumption. }
\end{proof}
%
%\red{Comparing the previous result with Lemma \ref{lem:ls_bias_term} we see that there is a degradation on the bound for the bias term. As previously mentioned, we believe that this can be improved by finding (possibly under additional assumptions) a better bound for $L^{\dagger2}(\lambda)$. Further investigations on this point are left for future work.}
When the comparison graphs are not necessarily the same, Lemma \ref{lem:ls_var_term} is replaced by the following lemma. 
\begin{lemma}\label{lem:bound_var_non_evol}
For any $\delta\in (0,e^{-1})$, it holds with probability greater than $1-\delta$ that 
\begin{equation*}
    \|L^{\dagger}(\lambda)Q(y-x^*)\|_2^2\leq \sigma^2\|L\|_2(1+4\log(1/\delta))\Big(\sum^{T-1}_{k=0}\frac{n-1}{(\lamin(L)+(n-1)\lambda\mu_k)^2}+\frac{n-1}{\lamin^2(L)}\Big).
\end{equation*}
\end{lemma}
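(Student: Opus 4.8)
The plan is to mirror the proof of Lemma~\ref{lem:ls_var_term}, replacing the explicit spectral decomposition of $L(\lambda)$ (which is unavailable in the evolving-graph case) by the Loewner lower bound \eqref{eq:loewner_bd_3}. First, since $y-x^*$ has i.i.d.\ centered subgaussian entries with $\psi_2$-norm $\sigma$, I would apply the tail bound \cite[Prop.~1.1]{hsu2012tail} to the quadratic form $\|L^\dagger(\lambda)Q(y-x^*)\|_2^2 = (y-x^*)^\top \Sigma\, (y-x^*)$ with $\Sigma := Q^\top L^{\dagger 2}(\lambda)Q$, obtaining $\|L^\dagger(\lambda)Q(y-x^*)\|_2^2 \leq \sigma^2(1+4c)\Tr(\Sigma)$ with probability $1-e^{-c}$, and then take $c = \log(1/\delta)$.

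Next I would bound $\Tr(\Sigma)$. Writing $\Tr(\Sigma) = \Tr(QQ^\top L^{\dagger 2}(\lambda))$ and using $\Tr(AB)\le \|A\|_2\Tr(B)$ for symmetric p.s.d.\ $A,B$, together with the fact that $QQ^\top = L$ (the stacked Laplacian, being block diagonal with blocks $Q_kQ_k^\top = L_k$), this gives $\Tr(\Sigma) \le \|L\|_2\,\Tr\big(L^{\dagger 2}(\lambda)\big)$.

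The key remaining step is to bound $\Tr\big(L^{\dagger 2}(\lambda)\big)$ via \eqref{eq:loewner_bd_3}, namely $L(\lambda)\succcurlyeq \tilde L(\lambda) := \big(\tfrac{1}{n-1}\lamin(L)\,I_{T+1} + \lambda MM^\top\big)\otimes \Lcom$. Since $\lambda>0$ and each $G_k$ is connected, Lemma~\ref{lem:rank_Laplacian_reg} gives $\calN(L(\lambda)) = \spn\{e_k\otimes \ones_n\}_{k=0}^T$, and since $\tfrac{1}{n-1}\lamin(L)\,I_{T+1} + \lambda MM^\top$ is positive definite, $\tilde L(\lambda)$ has the same null space. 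Hence $L(\lambda)\succcurlyeq \tilde L(\lambda)$ with matching null spaces yields $\tilde L^\dagger(\lambda)\succcurlyeq L^\dagger(\lambda)\succcurlyeq 0$ (the fact cited in the footnote in the proof of Lemma~\ref{lem:bound_bias_non_evol}), and therefore $\Tr\big(L^{\dagger 2}(\lambda)\big) = \Tr\big((L^\dagger(\lambda))^2\big) \le \Tr\big((\tilde L^\dagger(\lambda))^2\big)$, because for $A\succcurlyeq B\succcurlyeq 0$ one has $\Tr(A^2)-\Tr(B^2) = \Tr\big((A-B)(A+B)\big)\ge 0$. Finally, the nonzero eigenvalues of $\tilde L(\lambda)$ are $\lamin(L) + (n-1)\lambda\mu_k$, each of multiplicity $n-1$, for $k=0,\dots,T$ (with $\mu_T=0$), so $\Tr\big((\tilde L^\dagger(\lambda))^2\big) = \sum_{k=0}^{T-1}\tfrac{n-1}{(\lamin(L)+(n-1)\lambda\mu_k)^2} + \tfrac{n-1}{\lamin^2(L)}$; chaining the three bounds concludes the proof.

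I expect the only delicate point to be the passage from the Loewner ordering $L(\lambda)\succcurlyeq\tilde L(\lambda)$ to the trace inequality for the squared pseudo-inverses: one must verify the null-space match so that inversion reverses the ordering, and then observe that although squaring does \emph{not} preserve the Loewner order in general, it does preserve the trace inequality. Everything else is routine linear algebra combined with the already-established concentration bound.
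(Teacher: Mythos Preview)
Your proposal is correct and follows essentially the same route as the paper's proof: apply the Hanson--Wright-type bound to get $\sigma^2(1+4\log(1/\delta))\Tr(\Sigma)$, bound $\Tr(\Sigma)=\Tr(L\,L^{\dagger 2}(\lambda))\le \|L\|_2\,\Tr(L^{\dagger 2}(\lambda))$, and then use $L(\lambda)\succcurlyeq\tilde L(\lambda)$ to pass to $\Tr(\tilde L^{\dagger 2}(\lambda))$, whose eigenvalues are computed explicitly. In fact you give more justification than the paper does for the step $\Tr(L^{\dagger 2}(\lambda))\le\Tr(\tilde L^{\dagger 2}(\lambda))$: the paper states this inequality without argument, whereas you correctly check that the null spaces of $L(\lambda)$ and $\tilde L(\lambda)$ coincide (so the Loewner order reverses under pseudo-inversion) and then observe that $A\succcurlyeq B\succcurlyeq 0$ implies $\Tr(A^2)\ge\Tr(B^2)$ via $\Tr((A-B)(A+B))\ge 0$.
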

\begin{proof}
 Reasoning as in the proof of Lemma \ref{lem:ls_var_term} (using \rev{\cite[Thm. 2.1]{hsu2012tail}}) we obtain with probability at least $1-\delta$ that
 \begin{equation}\label{eq:ls_concen_evol_var}
     \|L^\dagger Q(y-x^*)\|^2\leq \sigma^2 (1+4\log(1/\delta))\Tr\big(L L^{\dagger 2}(\lambda)\big).
 \end{equation}
Recall from \eqref{eq:loewner_bd_3} that $L(\lambda) \succcurlyeq \tilde{L}(\lambda)$, hence we deduce that 
\begin{align*}
\Tr\big(L L^{\dagger 2}(\lambda)\big) \leq \|L\|_2\Tr\big({L}^{\dagger 2}(\lambda)\big) \leq \|L\|_2\Tr\big(\tilde{L}^{\dagger 2}(\lambda)\big). 
\end{align*}
Since $\tilde{L}^{\dagger}(\lambda)=\big(\frac{1}{n-1}\lamin(L)I_{T+1}+\lambda MM^\top\big)\otimes \Lcom$, it is easy to verify that the nonzero eigenvalues of $\tilde{L}(\lambda)$ correspond to the set $\{\lamin(L)+\lambda\mu_k (n-1)\}^{T-1}_{k=0}\cup \{\lamin(L)\}$, and each eigenvalue has multiplicity $n-1$. Hence,
 \begin{equation*}
     \Tr\big(\tilde{L}^{\dagger 2}(\lambda)\big)=\sum^{T-1}_{k=0}\frac{n-1}{(\lamin(L)+\lambda\mu_k(n-1))^2}+\frac{n-1}{\lamin^2(L)}.
 \end{equation*}
Plugging this into \eqref{eq:ls_concen_evol_var}, the result follows. 
\end{proof}

\subsection{Proof of Theorem \ref{thm:error_proj_fixed_graph}}
\label{sec:proof_thm_proj_evolv}
The proof of Theorem \ref{thm:error_proj_fixed_graph} follows directly from the two lemmas below.
\begin{lemma}\label{lem:proj_smooth_part}
If $z^*$ is such that $\|Ez^*\|_2^2\leq S_T$, and \rev{$\tau\geq 0$} then $\|\hfreqproj z^*\|_2^2\leq \rev{S_T(\frac{1}{\tau}\wedge\frac{1}{(n-1)\mu_{T-1}})}$.
\end{lemma}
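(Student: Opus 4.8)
The plan is to diagonalize $E^\top E$ and read off everything in its eigenbasis. Recall that $E^\top E = MM^\top \otimes CC^\top$, so (under Assumption~\ref{assump:fixed_graph}, where the eigenvectors of $L_0$ and of $CC^\top$ are aligned, $a_j = v_j$) its eigenpairs are $(\lambda_j\mu_k,\, u_k\otimes a_j)$ for $k = 0,\dots,T$ and $j = 1,\dots,n$, and the family $\{u_k\otimes a_j\}$ is an orthonormal basis of $\matR^{n(T+1)}$. By definition $\hfreqsp = \lfreqsp^\perp$ is the span of those $u_k\otimes a_j$ with $(k,j)\in\calH_\tau$, i.e. with $\lambda_j\mu_k \geq \tau$, and $\hfreqproj$ is the orthogonal projection onto it.

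First I would expand $z^*$ in this basis; by orthonormality $\|\hfreqproj z^*\|_2^2 = \sum_{(k,j)\in\calH_\tau}\langle z^*, u_k\otimes a_j\rangle^2$. Next I would write the quadratic form $\|Ez^*\|_2^2 = z^{*\top}E^\top E z^* = \sum_{k,j}\lambda_j\mu_k\langle z^*, u_k\otimes a_j\rangle^2$ and drop the nonnegative terms indexed by $\calL_\tau$, obtaining $\|Ez^*\|_2^2 \geq \sum_{(k,j)\in\calH_\tau}\lambda_j\mu_k\langle z^*, u_k\otimes a_j\rangle^2$. Finally, since $\lambda_j\mu_k \geq \tau$ for every $(k,j)\in\calH_\tau$, this last sum is at least $\tau\sum_{(k,j)\in\calH_\tau}\langle z^*, u_k\otimes a_j\rangle^2 = \tau\|\hfreqproj z^*\|_2^2$. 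Combining with the hypothesis $\|Ez^*\|_2^2 \leq S_T$ gives $\tau\|\hfreqproj z^*\|_2^2 \leq S_T$, which is exactly the claim.

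There is essentially no obstacle here: the only points deserving a word of care are that $\{u_k\otimes a_j\}$ really does form an orthonormal eigenbasis of $E^\top E$ (which is where the alignment $a_j = v_j$ from Assumption~\ref{assump:fixed_graph} is used) and that the directions with $\lambda_j = 0$ (the block-constant direction $a_n \propto \ones_n$) all belong to $\calL_\tau$ and hence never enter the estimate. This is the standard ``tail of the spectrum'' bound, entirely analogous to the classical control of the bias of Laplacian eigenmaps, and the argument carries over verbatim to the evolving-graph setting after replacing the eigenpairs of $L_0$ by those of $\complincmat\complincmat^\top$ via the Loewner bound \eqref{eq:lowner_bound}.
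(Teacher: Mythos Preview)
Your proof is correct and is essentially identical to the paper's: expand $\|\hfreqproj z^*\|_2^2$ and $\|Ez^*\|_2^2$ in the orthonormal eigenbasis $\{u_k\otimes a_j\}$ of $E^\top E$, drop the $\calL_\tau$ terms from the latter, and use $\lambda_j\mu_k\geq\tau$ on $\calH_\tau$. One small clarification: the lemma does not rely on Assumption~\ref{assump:fixed_graph} or on the Loewner bound \eqref{eq:lowner_bound} at all---$E^\top E = MM^\top\otimes CC^\top$ depends only on the path graph and the complete graph, not on $L_0$ or the $G_k$'s, so the argument already applies verbatim to the evolving case (as the paper itself notes before Lemma~\ref{lem:proj_noisy_part2}).
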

\begin{proof}
By Parseval's identity,
\[\|\hfreqproj z^*\|_2^2=\sum_{(k,j)\in \calH_\tau}\langle z^*,u_k\otimes a_j\rangle ^2.\]
On the other hand, 
\begin{align*}
\sum_{(k,j)\in \calH_\tau}(n-1)\mu_k\langle z^*,u_k\otimes a_j\rangle^2&\leq \sum^{T-1}_{k=0}\sum^{n-1}_{j=1}(n-1)\mu_k\langle z^*,u_k\otimes a_j\rangle^2 = \|Ez^*\|_2^2 \leq S_T,
\end{align*}
where we used the fact $\mu_k\geq 0$ in the first inequality, and the assumption $\|Ez^*\|_2^2\leq S_T$ in the second inequality. \rev{We first assume that $\tau>0$}. Since $(n-1)\mu_k \geq \tau$  for all $(k,j)\in \calH_\tau$, we deduce that\[\|\hfreqproj z^*\|_2^2\leq \frac{S_T}{\tau}.\]
\rev{On the other hand, we have the following inequalities
\[(n-1)\mu_{T-1}\|\hfreqproj z^*\|^2\leq \|E \hfreqproj z^*\|^2\leq \|E z^*\|^2\leq S_T,\]
from which the lemma follows.
}

\end{proof}

\begin{lemma}\label{lem:proj_noisy_part}
For any $\delta \in (0,e^{-1})$, it holds with probability greater than $1-\delta$
\begin{equation*}
\|\lfreqproj L^\dagger Q(y-x^*)\|_2^2\leq 2\sigma^2 \alpha_1 (1+4\log{(1/\delta)})\left(\frac{T+1}{\pi} \sqrt{\frac{\tau}{n-1}}\sum^{n-1}_{j=1}\frac{1}{\alpha^2_j}\red{+ \sum^{n-1}_{j=1}\frac1{\alpha^2_j}}\right).
\end{equation*}
\end{lemma}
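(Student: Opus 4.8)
The plan is to treat $\lfreqproj L^\dagger Q(y-x^*)$ as a subgaussian quadratic form and reduce the bound to a trace computation, mirroring the proof of Lemma~\ref{lem:ls_var_term}. Write $\epsilon := y-x^*$ for the noise vector, whose entries are i.i.d.\ centered subgaussian with $\psi_2$-norm $\sigma$, and set $A := \lfreqproj L^\dagger Q$, so that the quantity of interest is $\|A\epsilon\|_2^2$. Applying \cite[Prop.~1.1]{hsu2012tail} with $c=\log(1/\delta)>1$ (valid since $\delta\in(0,e^{-1})$), with probability at least $1-\delta$,
\[
\|A\epsilon\|_2^2 \le \sigma^2\big(1+4\log(1/\delta)\big)\Tr\!\big(A^\top A\big),
\]
so it is enough to prove $\Tr(A^\top A) \le 2\alpha_1\big(\tfrac{T+1}{\pi}\sqrt{\tau/(n-1)}+1\big)\sum_{j=1}^{n-1}\alpha_j^{-2}$.

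For the trace, note that under Assumption~\ref{assump:fixed_graph} (with the choice $v_j=a_j$) the matrices $L$, hence $L^\dagger$, and the spectral projector $\lfreqproj$ are simultaneously diagonal in the orthonormal basis $\{u_k\otimes a_j\}$; in particular $\lfreqproj$ is a symmetric idempotent commuting with $L^\dagger$, so that $A^\top A = Q^\top L^\dagger\lfreqproj L^\dagger Q$ and $L^\dagger\lfreqproj L^\dagger = \lfreqproj L^{\dagger 2}$. Using cyclicity of the trace,
\[
\Tr(A^\top A) = \Tr\!\big(\lfreqproj L^{\dagger 2}\,QQ^\top\big) \le \|QQ^\top\|_2\,\Tr\!\big(\lfreqproj L^{\dagger 2}\big) = \alpha_1\,\Tr\!\big(\lfreqproj L^{\dagger 2}\big),
\]
where the inequality is $\Tr(MN)\le\|M\|_2\Tr(N)$ for symmetric $M$ and p.s.d.\ $N$, and $\|QQ^\top\|_2 = \|L_0\|_2 = \alpha_1$. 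By \eqref{eq:decomp_spec}--\eqref{eq:decomp_L} the constant-in-space directions $u_k\otimes a_n$ lie in $\calN(L^\dagger)$ and contribute nothing, so $\Tr(\lfreqproj L^{\dagger 2}) = \sum_{(k,j)\in\calL_\tau,\,1\le j\le n-1}\alpha_j^{-2}$; and since $\lambda_j=n-1$ for $j\le n-1$, a pair $(k,j)$ with $j\le n-1$ belongs to $\calL_\tau$ exactly when $(n-1)\mu_k<\tau$. Hence $\Tr(\lfreqproj L^{\dagger 2}) = N_\tau\sum_{j=1}^{n-1}\alpha_j^{-2}$ with $N_\tau := \#\{0\le k\le T : (n-1)\mu_k<\tau\}$.

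The one genuinely analytic step is to bound $N_\tau$. Using the explicit path-graph eigenvalues $\mu_k=4\sin^2\tfrac{(T-k)\pi}{T+1}$ for $k\le T-1$ (and $\mu_T=0$), the condition $(n-1)\mu_k<\tau$ reads $\big|\sin\tfrac{(T-k)\pi}{T+1}\big| < \tfrac12\sqrt{\tau/(n-1)}$; as $k$ runs over $\{0,\dots,T-1\}$ the angle runs over the equispaced points $\tfrac{m\pi}{T+1}$, $m=1,\dots,T$, in $(0,\pi)$, where $\sin$ is small only near the endpoints $0$ and $\pi$. Splitting into those two ranges and using $\sin x\ge \tfrac{2x}{\pi}$ on $[0,\pi/2]$ together with the symmetry of $\sin$ about $\pi/2$, at most $\tfrac{T+1}{2}\sqrt{\tau/(n-1)}$ of these points can satisfy the bound; adding the single index $k=T$ gives $N_\tau\le 1+\tfrac{T+1}{2}\sqrt{\tau/(n-1)} \le 2\big(\tfrac{T+1}{\pi}\sqrt{\tau/(n-1)}+1\big)$, since $\tfrac12\le\tfrac2\pi$ and $1\le 2$. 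Chaining the displayed bounds proves the lemma.

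Beyond this eigenvalue count the argument is routine: it reuses verbatim the concentration inequality from Lemma~\ref{lem:ls_var_term} and standard manipulations with the Kronecker spectral decomposition. The point to state carefully is that $L^\dagger$ annihilates the $j=n$ (constant-per-block) directions while $\lfreqproj$ does not, so these indices must be dropped from the trace --- which is also why $\est z$ is block-centered; and one should note that $\delta<e^{-1}$ is precisely what makes $c=\log(1/\delta)>1$, as required for the simplified form of the tail bound in \cite{hsu2012tail}.
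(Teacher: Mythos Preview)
Your argument is correct and follows essentially the same route as the paper: apply the Hanson--Wright type bound of \cite{hsu2012tail} to reduce to $\Tr(Q^\top L^\dagger\lfreqproj L^\dagger Q)$, pull out $\|QQ^\top\|_2=\alpha_1$, use the common Kronecker eigenbasis to express $\Tr(\lfreqproj L^{\dagger 2})$ as $N_\tau\sum_j\alpha_j^{-2}$, and finish with an elementary count of the path-graph eigenvalues below $\tau/(n-1)$. The only cosmetic difference is that you use $\sin x\ge 2x/\pi$ on $[0,\pi/2]$ (giving the intermediate bound $\tfrac{T+1}{2}\sqrt{\tau/(n-1)}$), whereas the paper uses $\sin x\ge x/2$ (giving $2\tfrac{T+1}{\pi}\sqrt{\tau/(n-1)}$ directly); your bound is slightly sharper before you relax it to match the stated constant.
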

\begin{proof}
Define \[\Sigma:=(\lfreqproj L^\dagger Q)^\top \lfreqproj L^\dagger Q=Q^\top L^\dagger \lfreqproj L^\dagger Q.\]
%
%Under the Dynamic TranSync model, the noise term $y-x^*$ is sub-Gaussian with $\|y-x^*\|_{\psi_2}^2 \lesssim \sigma^2$, where $\|\cdot\|_{\psi_2}$ represents the sub-Gaussian norm. 
Using \rev{\cite[Thm.2.1]{hsu2012tail}}, we have for all $c>1$ and with probability larger than $1-e^{-c}$ that
\begin{equation}\label{eq:tail_bound_proj}
\|\lfreqproj L^\dagger Q(y-x^*)\|_2^2\leq \sigma^2(1+4c)\Tr(\Sigma).
\end{equation}
By the cyclic property of the trace, it holds that
\begin{equation} \label{eq:tmp_bd_12}
\Tr(\Sigma)=\Tr (QQ^\top L^\dagger \lfreqproj L^\dagger)\leq \|QQ^\top\|_2\Tr(L^\dagger \lfreqproj L^\dagger),
\end{equation}
using the same property of p.s.d matrices as in the proof of Lemma \ref{lem:ls_var_term}. On the other hand,  
\begin{align*}
\lfreqproj = \sum_{(k,j)\in \calL_\tau} (u_k\otimes a_j)(u_k\otimes a_j)^\top \text{  and  }
L^\dagger &=\sum^{\red{T}}_{k=0}\sum^{n-1}_{j=1}\frac{1}{\alpha_j}(u_k\otimes a_j)(u_k\otimes a_j)^\top.
\end{align*}
Recall that $\calL_\tau=\{(k,j)\in\{0,\cdots, T\}\times [n]\text{ s.t }\lambda_j\mu_k\rev{\leq}\tau\}$ can be written as $\calL_1 \cup \calL_2$, where
\begin{align*}
    \calL_1 &= \{k=0,\cdots, T-1 \text{ s.t }(n-1)\mu_k\rev{\leq}\tau\} \times [n-1], \\
    \calL_2 &= \{(k,j)\in\{0,\cdots, T\}\times [n]\text{ s.t }\lambda_j\mu_k=0\} = (\{T\}\times [n]) \cup (\set{0,\dots,T} \times \set{n}).
\end{align*}
Thus, 
\begin{equation*}
\lfreqproj {L^{\dagger 2}} =\sum_{(k,j)\in \calL_1}\frac{1}{\alpha^2_j} (u_k\otimes a_j)(u_k\otimes a_j)^\top \red{+\sum^{n-1}_{j=1}\frac{1}{\alpha_j^2}(\ones_T\otimes a_j)(\ones_T\otimes a_j)^T}.
\end{equation*}
and we obtain the expression
\begin{equation} \label{eq:tmp_bd_11}
\Tr(\lfreqproj {L^{\dagger 2}})=\Big(|\{k=0,\cdots, T-1 \text{ s.t }(n-1)\mu_k\rev{\leq}\tau\}|\red{+1}\Big)\times \sum^{n-1}_{j=1}\frac1{\alpha^2_j}.
\end{equation}
Recall that $\mu_k = 4\sin^2\frac{(T-k)\pi}{2(T+1)}$ for $k=0,\dots,T-1$ and define the set
\begin{align*}
\calA:=\{k=0,\cdots, T-1 \text{ s.t }4(n-1)\sin^2\frac{(T-k)\pi}{2(T+1)} \rev{ \leq } \tau\}.
\end{align*}
Now since $\sin x \geq x/2$ for $x \in [0,\pi/2]$, we know that 
\begin{equation*}
\sin^2\frac{(T-k)\pi}{2(T+1)} \geq  \frac{(T-k)^2\pi^2}{16(T+1)^2}.
%    \begin{cases}
%O(T)\text{ if }S_T=O(T),\\
%\frac{(T-k)^2\pi^2}{4(T+1)^2} \text{ if } \frac{T-1}{2} \leq k \leq T-1,\\
%\frac{(1+k)^2\pi^2}{4(T+1)^2} \text{ if } 0 \leq k \leq \frac{T-1}{2}.\\
%\end{cases}
\end{equation*}
This in turn implies that $\abs{\calA}$ can be bounded as
\begin{align*}
  \abs{\calA} &\leq  \abs{\set{k=1,\dots,T: 4(n-1)\frac{\pi^2 k^2}{16(T+1)^2} \rev{\leq} \tau}}  \\
  &\leq \min\set{T, \frac{2(T+1)}{\pi}\sqrt{\frac{\tau}{n-1}}} \\
  &\leq 2\left(\frac{T+1}{\pi}\right)\sqrt{\frac{\tau}{n-1}}.
\end{align*}
%Defining
%\begin{align*}
%\calA:&=\{k=1,\cdots, T \text{ s.t }4(n-1)\sin^2\big(\frac{k\pi}{2(T+1)}\big)<\tau\},%\\
%\calA_1:&=\calA\cap\{k\in \matN, k\leq \ceil{\frac{T+1}{2}}\},\\
%\calA_2:&=\calA\setminus \calA_1
%\end{align*}
%and using that for $k\leq \ceil{(T+1)/2}$, $\sin^2(\frac{k\pi}{T+1})\geq \frac{k^2\pi^2}{4(T+1)^2}$, it follows that
%\begin{align*}
%|\calA\cap \{k\leq \ceil{(T+1)/2}\}|&\leq |\{k\leq \ceil{(T+1)/2} \text{ s.t }\frac{k^2\pi^2}{4(T+1)^2}\leq \tau\}|\\
%&\leq \frac{T+1}{\pi}\sqrt{\frac{\tau}{n-1}}.
%\end{align*}
%On the other hand, given that $\sin(\cdot)$ is increasing in $[\pi/4,\pi/2]$ we have that $\calA=\calA\cap \{k\leq \ceil{(T+1)/2}\}$.
%By the symmetry of the $\sin(\cdot)$ function and the definition of $\calA_2$, we verify that 
%\[|\calA_2|\leq \frac{T+1}{\pi}\sqrt{\frac{\tau}{n-1}},
%\]
%which implies that $|\calA|=|\calA_1|+|\calA_2|\leq  \frac{2(T+1)}{\pi}\sqrt{\frac{\tau}{n-1}}$. 
Using this in \eqref{eq:tmp_bd_11}, we obtain
\begin{equation*}
Tr(\lfreqproj {L^{\dagger2}})\leq 2\left(\frac{T+1}{\pi} \right) \sqrt{\frac{\tau}{n-1}}\sum^{n-1}_{j=1}\frac1{\alpha^2_j}\red{+\sum^{n-1}_{j=1}\frac1{\alpha^2_j}}.
\end{equation*}
Combining this with \eqref{eq:tail_bound_proj} and \eqref{eq:tmp_bd_12}, the result follows by taking $c=\log{(1/\delta)}$.
\end{proof}

\subsection{Proof of Theorem \ref{thm:error_proj_evol}}

We first observe that Lemma \ref{lem:proj_smooth_part} works under Assumption \ref{assump:smooth} and hence applies verbatim in this context. We now formulate a result analogous to Lemma \ref{lem:proj_noisy_part} dropping the assumption that all the graphs are the same and using the semi-definite bound \eqref{eq:lowner_bound}.

\begin{lemma}\label{lem:proj_noisy_part2}
Assume that $G_k$ is connected for all $k = 0,\dots,T$ and let $\beta_k(n-1)$ be the Fiedler eigenvalue of $L_k$. For any $\delta \in (0,e^{-1})$, it holds with probability larger than $1-\delta$
\begin{equation*}
\|\lfreqproj L^\dagger Q(y-x^*)\|_2^2\leq 2\sigma^2\frac{\norm{L}_2}{\min_{0\leq k\leq T}{\beta_k^2}}(1+4\log{(1/\delta)})\left(\frac{T+1}{\pi(n-1)} \sqrt{\frac{\tau}{n-1}}+\red{\frac 1{n-1}}\right).
\end{equation*}
\end{lemma}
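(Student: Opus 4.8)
The plan is to mimic the argument of Lemma \ref{lem:proj_noisy_part}, replacing the spectral decomposition of the (common) Laplacian $L_0$ by the Loewner bound $L \succcurlyeq D_\beta \otimes \Lcom \succcurlyeq \min_{0\le k\le T}\beta_k \,\tilde L$ from \eqref{eq:lowner_bound}. First I would invoke \cite[Prop.~1.1]{hsu2012tail} exactly as in \eqref{eq:tail_bound_proj}: with $\Sigma := Q^\top L^\dagger \lfreqproj L^\dagger Q$ we get, with probability at least $1-\delta$ (taking $c = \log(1/\delta)$),
\begin{equation*}
\|\lfreqproj L^\dagger Q(y-x^*)\|_2^2 \leq \sigma^2(1+4\log(1/\delta))\Tr(\Sigma),
\end{equation*}
and then by the cyclic property of the trace and the inequality $\Tr(AB)\le\|A\|_2\Tr(B)$ for p.s.d.\ $A,B$,
\begin{equation*}
\Tr(\Sigma) = \Tr\big(QQ^\top L^\dagger \lfreqproj L^\dagger\big) \leq \|QQ^\top\|_2\,\Tr\big(L^\dagger \lfreqproj L^\dagger\big) = \|L\|_2\,\Tr\big(\lfreqproj L^{\dagger 2}\big).
\end{equation*}

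Next I would bound $\Tr(\lfreqproj L^{\dagger 2})$. Here the key point is that, in the evolving-graph setting, $\lfreqproj$ is built from the eigenvectors $u_k\otimes a_j$ of $MM^\top \otimes \Lcom$ (not of $L$), while $L^\dagger$ involves the Laplacians $L_k$; however the Loewner bound $L \succcurlyeq \min_k\beta_k\,(I_{T+1}\otimes\Lcom)$ — and the fact that $\lfreqproj$ commutes with $I_{T+1}\otimes\Lcom$ and its range is spanned by a subset of the shared eigenbasis $\{u_k\otimes a_j\}$ — gives $L^{\dagger 2}\preccurlyeq \min_k\beta_k^{-2}\,(I_{T+1}\otimes\Lcom)^{\dagger 2}$, hence
\begin{equation*}
\Tr\big(\lfreqproj L^{\dagger 2}\big) \leq \frac{1}{\min_{0\le k\le T}\beta_k^2}\,\Tr\big(\lfreqproj (I_{T+1}\otimes\Lcom)^{\dagger 2}\big).
\end{equation*}
The eigenvalues of $(I_{T+1}\otimes\Lcom)^{\dagger 2}$ on the relevant eigenvectors are $1/(n-1)^2$, and the number of low-frequency indices in $\calL_\tau$ is controlled exactly as in the proof of Lemma \ref{lem:proj_noisy_part}: the count $|\calL_1|$ of $k$ with $(n-1)\mu_k<\tau$ is at most $2\frac{T+1}{\pi}\sqrt{\tau/(n-1)}$ via the $\sin x\ge x/2$ estimate on the path-graph eigenvalues $\mu_k=4\sin^2\frac{(T-k)\pi}{T+1}$, plus the $O(1)$ contribution from $\calL_2$. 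This yields
\begin{equation*}
\Tr\big(\lfreqproj (I_{T+1}\otimes\Lcom)^{\dagger 2}\big) \lesssim \frac{1}{(n-1)^2}\left(\frac{T+1}{\pi}\sqrt{\frac{\tau}{n-1}} + 1\right),
\end{equation*}
which combined with $\|L\|_2 = \|QQ^\top\|_2$ and the constant factors produces the stated bound (up to the harmless rewriting of the constants).

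The main obstacle I anticipate is the step $L^{\dagger 2}\preccurlyeq \min_k\beta_k^{-2}(I_{T+1}\otimes\Lcom)^{\dagger 2}$ restricted to $\range(\lfreqproj)$: taking pseudo-inverses and squares does not in general preserve Loewner order, so one must use that $L$ and $I_{T+1}\otimes\Lcom$ share the null space (both equal $\spn\{e_k\otimes\ones_n\}$ since each $G_k$ is connected) together with the fact that $\lfreqproj$ projects onto a span of common eigenvectors of $I_{T+1}\otimes\Lcom$ — so that the inequality only needs to hold when sandwiched by $\lfreqproj$, where everything is simultaneously diagonalizable against $I_{T+1}\otimes\Lcom$ and the order-reversal of the pseudo-inverse is legitimate. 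An alternative, and perhaps cleaner, route is to bound $\Tr(\lfreqproj L^{\dagger 2}) = \sum_{(k,j)\in\calL_\tau}\|L^\dagger (u_k\otimes a_j)\|_2^2 \le |\calL_\tau|\cdot\|L^\dagger\|_2^2 = |\calL_\tau|/\lamin^2(L)$ and then use $\lamin(L)\ge \min_k\beta_k(n-1)$; I would likely present the argument this way to sidestep the Loewner-square subtlety entirely, since it gives exactly the claimed $\frac{1}{\min_k\beta_k^2}\cdot\frac{1}{(n-1)^2}(\cdots)$ form after inserting the bound on $|\calL_\tau|$.
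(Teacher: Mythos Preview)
Your proposal is correct and follows essentially the same route as the paper: apply the Hsu--Kakade--Zhang tail bound to reduce to $\sigma^2(1+4\log(1/\delta))\|QQ^\top\|_2\Tr(\lfreqproj L^{\dagger 2})$, then control this trace by comparing $L$ with $\tilde L = I_{T+1}\otimes\Lcom$ and counting the low-frequency indices via the $\sin x \ge x/2$ estimate on the path-graph eigenvalues. The paper handles the Loewner-square step by observing that the eigenvectors of $L$ and $\tilde L$ are aligned (in fact $\tilde L = (n-1)P_{\range(L)}$, so it commutes with $L$ and the squared pseudo-inverse comparison is legitimate); your alternative route $\Tr(\lfreqproj L^{\dagger 2}) = \sum_{(k,j)\in\calL_\tau}\|L^\dagger(u_k\otimes a_j)\|_2^2 \le (\text{number of non-null directions in }\calL_\tau)\cdot \lamin^{-2}(L)$ is a clean way to reach the same bound without invoking that fact.
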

\begin{proof}
 Let $\Sigma:=(\lfreqproj L^\dagger Q)^\top \lfreqproj L^\dagger Q=Q^\top L^\dagger \lfreqproj L^\dagger Q$ \rev{and recall that $y-x^*$ is  subgaussian with parameter $\sigma$}. Using \rev{\cite[Thm.2.1]{hsu2012tail}} we obtain for any $c>1$ that with probability larger than $1-e^{-c}$
 \begin{align}
 \|\lfreqproj L^\dagger Q(y-x^*)\|_2^2&\rev{\leq \sigma^2(\Tr(\Sigma)+2\sqrt{\Tr(\Sigma^2)c}+2\|\Sigma\|_2c)}\nonumber\\
 \label{eq:conc_lemma_proj1}
 &\leq \sigma^2(1+4c)\Tr(\Sigma)\leq \sigma^2(1+4c)\|QQ^\top\|_2\Tr(L^\dagger \lfreqproj L^\dagger),
 \end{align}
 \rev{where to pass from the first line to the second line we used the fact that $c>1$ and $\Tr(\Sigma^2) \leq \|\Sigma\|_2 \Tr(\Sigma) \leq (\Tr(\Sigma))^2$.}
 Notice that up until this point, the proof does not change with respect to the proof of Lemma \ref{lem:proj_noisy_part}. Recall  from \eqref{eq:lowner_bound} that $\tilde L=I_{T+1}\otimes \Lcom$  and $L \succcurlyeq (\min_{0\leq k\leq T}{\beta_k}) \tilde L$. Since the eigenvectors of $L, \tilde L$ are aligned \rev{(because the eigenvector of $\Lcom$ can be aligned with those of any Laplacian)}, it follows that  
 \begin{align} \label{eq:low_bd_13_tmp}
   \frac1{\min_{0\leq k\leq T}{\beta_k^2}} \tilde{L}^{\dagger 2} \succcurlyeq  L^{\dagger 2}. 
 \end{align}
Using \eqref{eq:low_bd_13_tmp} and the fact that $\lfreqproj$ is a projection matrix, we obtain %\hemant{clarify the second inequality.} 
\begin{align} \label{eq:bound_trace_lemma}
    \Tr(L^\dagger \lfreqproj L^\dagger)&=\Tr(\lfreqproj {L^{\dagger 2}})\leq \frac1{\min_{0\leq k\leq T}{\beta_k^2}}\Tr(\lfreqproj {\tilde{L}^{\dagger 2}}).
\end{align}
\red{where the second inequality follows from \eqref{eq:low_bd_13_tmp} and the fact that $\Tr(AB) \geq 0$ if $A,B \succcurlyeq 0$.}
%(using the cyclic property of the trace and that $P_{\calL_\tau}$ is idempotent) from the fact that %\begin{equation*}
%    \Tr\left(P_{\calL_\tau}\big(\frac1{\min_{0\leq k\leq T}{\beta_k^2}} \tilde{L}^{\dagger 2}  -{L^{\dagger 2}}\big)\right)=\Tr\left(P_{\calL_\tau}\big(\frac1{\min_{0\leq k\leq T}{\beta_k^2}} \tilde{L}^{\dagger 2} -{L^{\dagger 2}}\big)P_{\calL_\tau}\right),
%\end{equation*} 
%and, from \eqref{eq:low_bd_13_tmp}, the latter is a positive semidefinite matrix.
%
% 
We also have that
\begin{equation*}
    \lfreqproj {\tilde{L}^{\dagger 2}}=\sum_{(k,j)\in \calL_1}\frac{1}{(n-1)^2} (u_k\otimes a_j)(u_k\otimes a_j)^\top
\end{equation*}
hence proceeding in a manner similar to the proof of Theorem \ref{thm:error_proj_fixed_graph} leads to the bound 
\begin{align*}
    \Tr(\lfreqproj {\tilde{L}^{\dagger 2}}) &=  \big(|\{k=0,\cdots, T-1 \text{ s.t }(n-1)\mu_k\rev{\leq}\tau\}|\red{ + 1}\big)\times \frac{1}{n-1}\\
    &\leq 2\frac{T+1}{\pi(n-1)}\sqrt{\frac{\tau}{n-1}}\red{+\frac{1}{n-1}}.
\end{align*}
Combining this with \eqref{eq:conc_lemma_proj1} and \eqref{eq:bound_trace_lemma}, the result follows. 
\end{proof}

\begin{proof}[Proof of Theorem \ref{thm:error_proj_evol}]
 The first inequality of Theorem \ref{thm:error_proj_evol} follows directly from Lemmas \ref{lem:proj_smooth_part} and \ref{lem:proj_noisy_part2}. Inequality \eqref{eq:thm_proj_evol_1} follows by choosing $\tau$ as in Section \ref{sec:warm_up_proj}. The rest of the proof is a direct consequence of the previous result.
\end{proof}

% Experiments
\section{Experiments}\label{sec:experiments}

We now empirically evaluate the performance of the proposed methods on both synthetic and real data. The code for all the experiments is available at \url{https://github.com/karle-eglantine/Dynamic_TranSync}.
\subsection{Synthetic data} \label{sec:synth_exps}

\paragraph{Generate the ground truth $z^*$.} The first step is to generate the true weight vector $z^*$ such that it satisfies Assumption \ref{assump:smooth}, for different regimes of $S_T$. Let us recall that this assumption can be written as
\[ \norm{Ez^*}_2^2 = z^{*\top} E^\top Ez^*\leq S_T\]
which implies that $z^*$ lies close to the null space of $E^\top E$, as quantified by $S_T$. Hence, one possibility to simulate $z^*$ is to project any vector onto a space $\calV_{\varepsilon}$ generated by the eigenvectors associated with the smallest eigenvalues of $E^\top E$. Likewise, $\calV_{\varepsilon}$ contains the null space of $E^\top E$ and a few more eigenvectors, depending on $S_T$. For $\varepsilon > 0$, let us define 
\[ \calL_{\varepsilon} := \set{(k,j) \in\set{0,\dots,T}\times [n] \text{ s.t. } \lambda_j\mu_k \rev{\leq} \varepsilon},\]
then $\calV_{\varepsilon}=\spn\{u_k\otimes a_j\}_{(k,j)\in \calL_\tau}$.  Using similar tools as in the proof of Theorem \ref{thm:error_proj_fixed_graph}, it can be shown that
\[ |\calL_\varepsilon| \leq T+n+\frac{\sqrt{(n-1)\varepsilon}(T+1)}{\pi}.\]
Hence, we can compute a value of $\varepsilon$ such that any vector belonging to $\calV_{\varepsilon}$ satisfies Assumption \ref{assump:smooth}. More precisely, choosing $\varepsilon = \left(\frac{\pi S_T}{(T+1)\sqrt{n-1}}\right)^{2/3}$, it then holds that for any $z \in \matR^{n(T+1)}$ such that $\norm{z}_2 = 1$,
\[ \norm{E P_{\calV_\varepsilon} z}_2^2 \leq S_T,\]
where $P_{\calV_\varepsilon}$ denotes the projection matrix onto the space $\calV_\varepsilon$.
The synthetic data is then generated as follows.
\begin{enumerate}
    \item Generate $z \sim \calN(0,I_{n(T+1)})$ and normalize it such that $\norm{z}_2 = 1$.
    \item Compute $k_\varepsilon = \lceil T+n + \left(\frac{T+1}{(n-1)\pi}\right)^{2/3}S_T^{1/3}\rceil$ eigenvectors of $E^\top E$, corresponding to its $k_\varepsilon$ smallest eigenvalues. 
    \item Compute $z^* = P_{\calV_\varepsilon} z$.
    \item Center each block $z^*_k$, $k=0,\dots, T$.
\end{enumerate}

\paragraph{Generate the observation data.} The observations consist of comparison graphs $G_t$ for $t\in\calT$ and the associated measurements $y_{ij}(t)$ for each edge  $\set{i,j}\in\calE_t$.
\begin{enumerate}
    \item For each $t\in\calT$,  $G_t$ is generated as an Erdös-Renyi graph $G(n,p(t))$, where the probability $p(t)$ is chosen appropriately, as described in Figs. \ref{fig:plots_transync}, \ref{fig:plots_btl} and \ref{fig:plots_transync_ls}. Meaningful recovery of $z^*$ is only possible if the union of all the graphs is connected, hence we ensure the connectivity of $G_U = \left(n,\cup_{t\in\calT} \calE_t\right)$.
    
    \item For all $t\in\calT$ and $\set{i,j}\in\calE_t$, we generate a noisy measurement $y_{ij}(t)$ of the strengths difference $z^*_{t,i}-z^*_{t,j}$ as in \eqref{eq:transync_model}, using a standard Gaussian noise.
\end{enumerate}

Once the true weights and the observations are generated, one can easily implement our methods using traditional least-squares solvers. In our experiments, we use the least-squares solver \texttt{lsqr} from the \texttt{scipy} package of Python. Note that in both of our methods, there is a tuning parameter -- $\lambda$ for the Penalized Least-Squares (denoted as DLS later) and $\tau$ for the Projection method (denoted DProj). Throughout, we choose $\lambda = (T/S_T)^{2/3}$ and $\tau = (S_T/T)^{2/3}$.

\paragraph{Dynamic BTL set up.} As noted in Remark \ref{rem:btl_setup}, the Dynamic TranSync model is linked to the Dynamic BTL model. Hence, we will also test numerically the performances of our methods on synthetic data generated according to the BTL model. The ground truth $w^* = \exp(z^*)$ is generated similarly as in the Dynamic TranSync setup, as well as the observation graphs. However the measurements $y_{ij}(t)$ are now generated according to the Dynamic BTL model (see \cite{karle2021dynamic} for more details). In this particular setup, we will compare the performance of our methods with two other approaches that focus on dynamic ranking for the BTL model, namely, the Maximum-Likelihood Estimation (MLE) method \cite{bong2020nonparametric} and the Dynamic Rank Centrality (DRC) method  \cite{karle2021dynamic}. These methods have shown optimal results when the strength of each item is a Lipschitz function of time\rev{, which translates to $S_T = \frac{1}T$ in our set up. More generally, the regime of interest in our smoothness assumption is $S_T = o(T)$.} 
%
% \begin{figure}[h!]
% \centering
% \begin{subfigure}{.5\textwidth}
%   \centering
%   \includegraphics[width=\linewidth]{MSE_transync_alpha_ST_1.png}
%   \caption{Smoothness $S_T = \frac{1}{T}$}
%   \label{fig:transync_alpha_ST_1}
% \end{subfigure}%
% \begin{subfigure}{.5\textwidth}
%   \centering
%   \includegraphics[width=\linewidth]{MSE_transync_alpha_ST_05.png}
%   \caption{Smoothness $S_T = \frac{1}{\sqrt{T}}$}
%   \label{fig:transync_alpha_ST_05}
% \end{subfigure}%
% \hfill
% \begin{subfigure}{.5\textwidth}
%   \centering
%   \includegraphics[width=\linewidth]{MSE_transync_alpha_ST_m05.png}
%   \caption{Smoothness $S_T = \sqrt{T}$}
%   \label{fig:transync_alpha_ST_m05}
% \end{subfigure}%
% \begin{subfigure}{.5\textwidth}
%   \centering
%   \includegraphics[width=\linewidth]{MSE_transync_alpha_ST_0.png}
%   \caption{Smoothness $S_T = 1$}
%   \label{fig:transync_alpha_ST_0}
% \end{subfigure}%
% \caption{MSE versus $T$ for DLS and DProj when the data is generated according to the Dynamic TranSync model and graphs are generated as $G(n,p(t))$ with $p(t)$ chosen randomly between $\frac{1}{n}$ and $\frac{\log(n)}{n}$. The results are averaged over the grid $\calT$ as well as $40$ Monte Carlo runs.}
% \label{fig:plots_transync}
% \end{figure}

\begin{figure}[h!]
\centering
\begin{subfigure}{.5\textwidth}
  \centering
  \includegraphics[width=\linewidth]{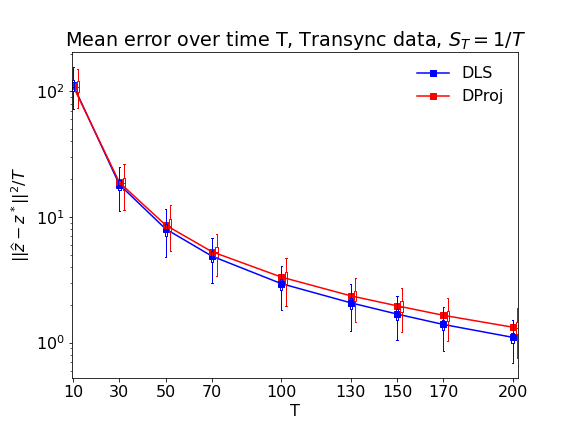}
  \caption{Smoothness $S_T = \frac{1}{T}$}
  \label{fig:transync_alpha_ST_1}
\end{subfigure}%
\begin{subfigure}{.5\textwidth}
  \centering
  \includegraphics[width=\linewidth]{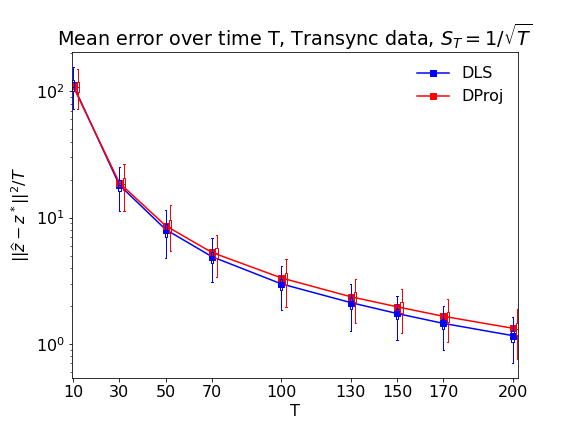}
  \caption{Smoothness $S_T = \frac{1}{\sqrt{T}}$}
  \label{fig:transync_alpha_ST_05}
\end{subfigure}%
\hfill
\begin{subfigure}{.5\textwidth}
  \centering
  \includegraphics[width=\linewidth]{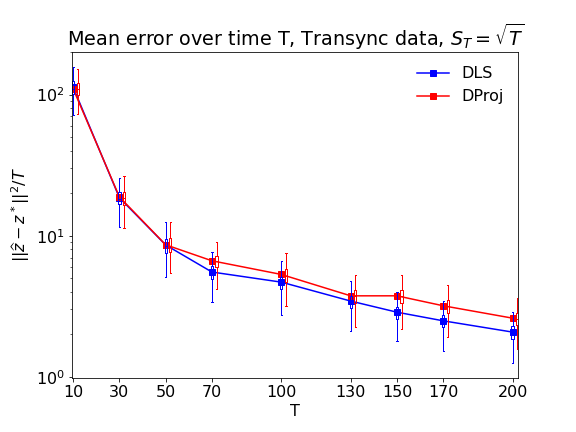}
  \caption{Smoothness $S_T = \sqrt{T}$}
  \label{fig:transync_alpha_ST_m05}
\end{subfigure}%
\begin{subfigure}{.5\textwidth}
  \centering
  \includegraphics[width=\linewidth]{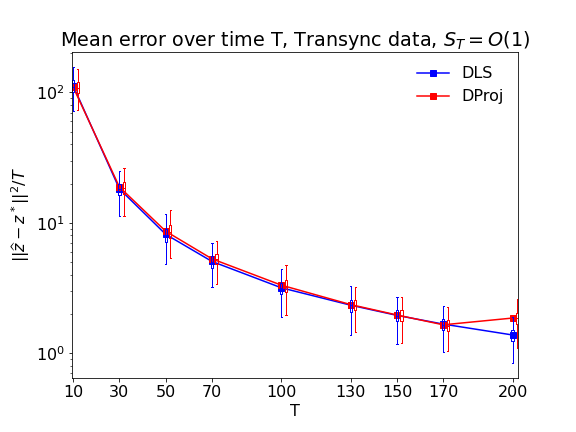}
  \caption{Smoothness $S_T = O(1)$}
  \label{fig:transync_alpha_ST_0}
\end{subfigure}%
\caption{\rev{MSE versus $T$ for DLS and DProj when the data is generated according to the Dynamic TranSync model for $n=100$ and graphs are generated as $G(n,p(t))$ with $p(t)$ chosen randomly between $\frac{1}{n}$ and $\frac{\log(n)}{n}$. The results are averaged over the grid $\calT$ as well as $40$ Monte Carlo runs.}}
\label{fig:plots_transync}
\end{figure}

\paragraph{Results.} The results are summarized below.
\begin{enumerate}
\item In Figure \ref{fig:plots_transync}, we consider $n = 100$ items and $T$ ranging from $10$ to $200$, where the data is generated according to the Dynamic TranSync model. The estimation errors are averaged over $40$  bootstrap simulations and plotted for different regimes of smoothness $S_T$. We observe that in every case, DLS and DProj methods give very similar results. 
%, as it is shown in Theorems \ref{thm:main_result_smooth} and \ref{thm:main_result_proj}. 
As expected, the MSE decreases to zero as $T$ increases in every smoothness regime. Note that the variance of the error, represented by the vertical bars, is also decreasing with $T$. 

\item In Figure \ref{fig:plots_btl}, we repeat the same experiments using data simulated according to the Dynamic BTL Model, and compare our results with the DRC and MLE methods for $S_T = \frac{1}{T}$ and $S_T = \frac{1}{\sqrt{T}}$. For both smoothness choices, DLS seems to be the best performing method as its MSE goes to zero at the fastest rate.
%It seems that on synthetic data, our methods are more efficient than DRC and MLE as their MSE converges faster towards $0$.

\item As a sanity check, we also show in Figure \ref{fig:plots_transync_ls} that our methods perform better than the naive least-squares (LS) approach that simply estimates the strength vector individually on each graph $G_k$. In this case, one needs to impose connectivity on each graph in order to obtain meaningful results using LS. 
%The other parameters of the problem are chosen as in Figure \ref{fig:plots_transync}. 
Figure \ref{fig:plots_transync_ls} shows that as expected, the MSE is constant with $T$ for the LS method. This illustrates that even when all the graphs are connected, dynamic approaches are better suited to recover the strengths and/or ranking of a set of items.

\item \rev{We show in Figure \ref{fig:sparsity} the influence of the sparsity of the input graphs on the estimation. We generate input graphs as $\calG(n,p)$ for different values of $p$ and observe that the MSE increases with the sparsity for the DLS method. The Projection method however has a similar performance for all sparsity levels.}

\item \rev{In Figure \ref{fig:perf}, we show that the optimal values for the hyperparameters $\lambda$ and $\tau$ derived theoretically are also numerically optimal. Indeed for both methods, the MSE is close to its minimum for these choices of parameters.}

\end{enumerate}

% \begin{figure}[h!]
% \centering
% \begin{subfigure}{.5\textwidth}
%   \centering
%   \includegraphics[width=\linewidth]{MSE_btl_alpha_ST_1.png}
%   \caption{Smoothness $S_T = \frac{1}{T}$}
%   \label{fig:plot_btl_lipschitz}
% \end{subfigure}%
% \begin{subfigure}{.5\textwidth}
%   \centering
%   \includegraphics[width=\linewidth]{MSE_btl_alpha_ST_05.png}
%   \caption{Smoothness $S_T = \frac{1}{\sqrt{T}}$}
%   %\label{fig:transync_alpha_ST_05}
% \end{subfigure}%
% \caption{MSE versus $T$ for DLS, DProj, DRC and MLE  when the data is generated according to the BTL model, and graphs are $G(n,p(t))$ with $p(t)$ chosen randomly between $\frac{1}{n}$ and $\frac{\log(n)}{n}$. The results are averaged over the grid $\calT$ as well as $40$ Monte Carlo runs.}
% \label{fig:plots_btl}
% \end{figure}

\begin{figure}[h!]
\centering
\begin{subfigure}{.5\textwidth}
  \centering
  \includegraphics[width=\linewidth]{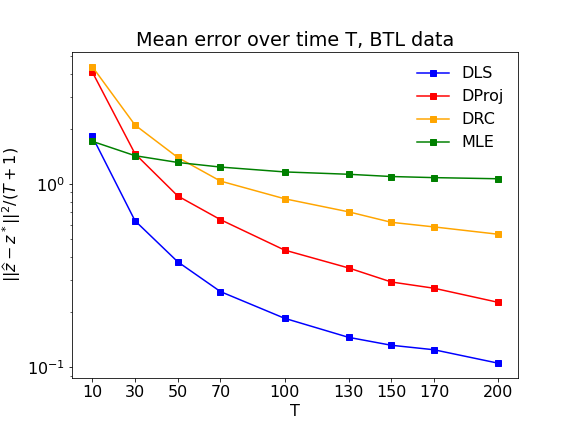}
  \caption{Smoothness $S_T = \frac{1}{T}$}
  \label{fig:plot_btl_lipschitz}
\end{subfigure}%
\begin{subfigure}{.5\textwidth}
  \centering
  \includegraphics[width=\linewidth]{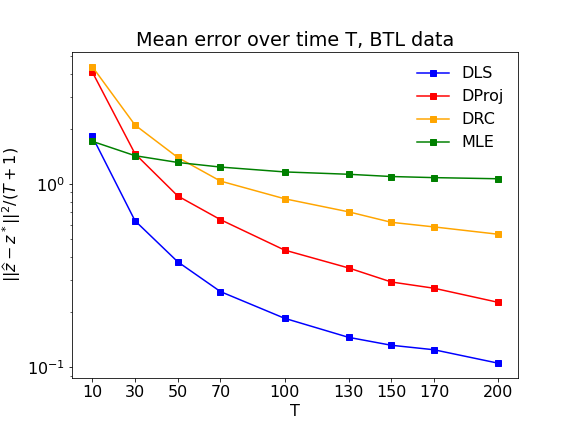}
  \caption{Smoothness $S_T = \frac{1}{\sqrt{T}}$}
  %\label{fig:transync_alpha_ST_05}
\end{subfigure}%
\caption{\rev{MSE versus $T$ for DLS, DProj, DRC and MLE  when the data is generated according to the BTL model for $n=100$, and graphs are $G(n,p(t))$ with $p(t)$ chosen randomly between $\frac{1}{n}$ and $\frac{\log(n)}{n}$. The results are averaged over the grid $\calT$ as well as $40$ Monte Carlo runs.}}
\label{fig:plots_btl}
\end{figure}

%
% \begin{figure}[h!]
% \centering
% \begin{subfigure}{.5\textwidth}
%   \centering
%   \includegraphics[width=\linewidth]{MSE_transync_alpha_ST_1_connected.png}
%   \caption{Smoothness $S_T = \frac{1}{T}$}
%   \label{fig:transync_ls_alpha_ST_1}
% \end{subfigure}%
% \begin{subfigure}{.5\textwidth}
%   \centering
%   \includegraphics[width=\linewidth]{MSE_transync_alpha_ST_m05_connected.png}
%   \caption{Smoothness $S_T = \sqrt{T}$}
%   %\label{fig:transync_ls_alpha_ST_m05}
% \end{subfigure}
% \caption{Evolution of estimation errors with $T$ for Least-Squares, DLS and DProj method when the synthetic data are generated according to the Dynamic TranSync model and the graphs are generated as $G(n,p(t))$ with $p(t) = \frac{\log(n)}{n}$. In particular, we ensure that the individual graphs are all connected. The results are averaged over the grid $\calT$ as well as $20$ Monte Carlo runs.}
% \label{fig:plots_transync_ls}
% \end{figure}
\begin{figure}[h!]
\centering
\begin{subfigure}{.5\textwidth}
  \centering
  \includegraphics[width=\linewidth]{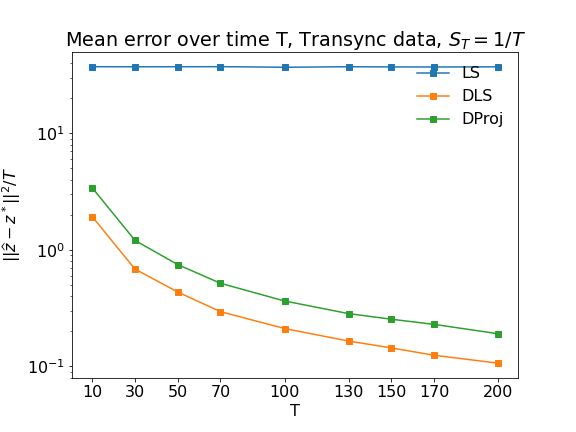}
  \caption{Smoothness $S_T = \frac{1}{T}$}
  \label{fig:transync_ls_alpha_ST_1}
\end{subfigure}%
\begin{subfigure}{.5\textwidth}
  \centering
  \includegraphics[width=\linewidth]{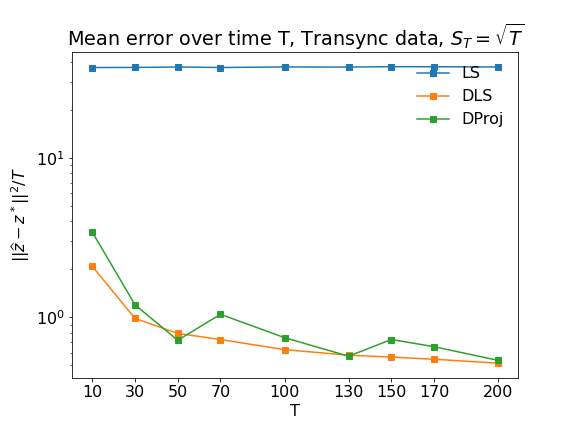}
  \caption{Smoothness $S_T = \sqrt{T}$}
  %\label{fig:transync_ls_alpha_ST_m05}
\end{subfigure}
%\label{fig:comp_ls}
\caption{\rev{Evolution of estimation errors with $T$ for Least-Squares, DLS and DProj method when the synthetic data are generated according to the Dynamic TranSync model for $n=100$ and the graphs are generated as $G(n,p(t))$ with $p(t) = \frac{\log(n)}{n}$. In particular, we ensure that the individual graphs are all connected. The results are averaged over the grid $\calT$ as well as $20$ Monte Carlo runs.}}
\label{fig:plots_transync_ls}
\end{figure}

\begin{figure}[h!]
\centering
\begin{subfigure}{.5\textwidth}
  \centering
  \includegraphics[width=\linewidth]{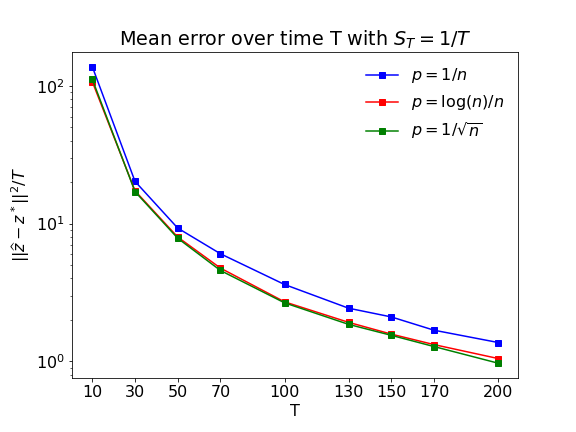}
  \caption{DLS Method}
  \label{fig:sparsity_ST_1_dls}
\end{subfigure}%
\begin{subfigure}{.5\textwidth}
  \centering
  \includegraphics[width=\linewidth]{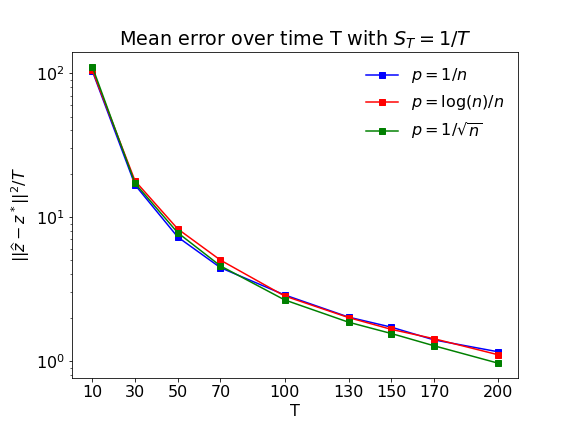}
  \caption{DProj Method}
    \label{fig:sparsity_ST_1_dproj}
\end{subfigure}
\caption{\rev{Evolution of estimation errors with $T$ for DLS and DProj method when the synthetic data are generated according to the Dynamic TranSync modelfor $n=100$ and the graphs are generated as $G(n,p)$ for different choices of $p$. In particular, we ensure that the individual graphs are all connected. The results are averaged over the grid $\calT$ as well as $20$ Monte Carlo runs.}}
\label{fig:sparsity}
\end{figure}

\begin{figure}[h!]
    \centering
    \begin{subfigure}{.5\textwidth}
        \centering
        \includegraphics[width=\linewidth]{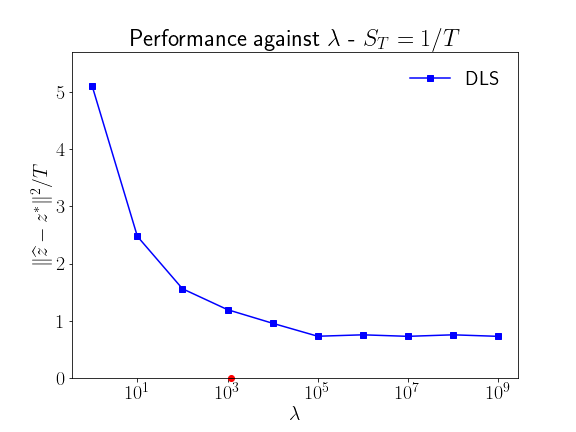}
        \label{fig:perf_lambda}
        \caption{Performance of DLS}
    \end{subfigure}%
    \begin{subfigure}{.5\textwidth}
        \centering
        \includegraphics[width=\linewidth]{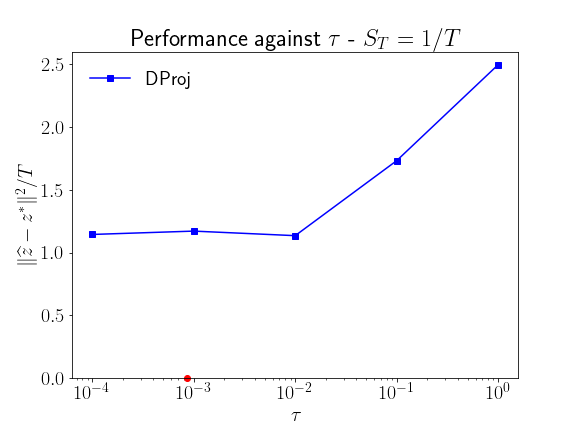}
        \label{fig:perf_tau}
        \caption{Performance of DProj}
    \end{subfigure}
    \caption{\rev{Performance of our methods for different values of hyperparameter, with $n = 100$, $T = 200$ and $\sigma=1$. We highlight in red on the x-axis the optimal values of parameter computed theoretically, $\lambda = \sigma^{4/3}\left(\frac{T}{S_T}\right)^{2/3}$ for the DLS and $\tau = \sigma^{-4/3}\left(\frac{S_T}{T}\right)^{2/3}$ for DProj.}}
    \label{fig:perf}
\end{figure}

%------------------------
% Real data experiments
%
\subsection{Real data}
We now provide empirical results on two real data sets -- the Netflix Prize data set \cite{jiang2011statistical}, and Premier League results from season $2000/2001$ to season $2017/2018$ \cite{ranksync21}.
In order to assess the performance of our algorithms, we will compute the number of upsets as well as the mean squared error\footnote{With a slight overload of notation, we use the term MSE here although it is not the same quantity as in \eqref{eq:mse_consis}.} (MSE) for each method.  The number of upsets is defined as the number of pairs for which the estimated preference is different from the observation. More precisely, denoting $\est z$ to be the estimator of the strengths (for any method),
\[ \text{Number of upsets} := \sum_{t\in\calT}\sum_{\set{i,j}\in\calE_t} \ones_{sign(y_{ij}(t)) \neq sign(\est z_{t,i} - \est z_{t,j})}.\]

The MSE is defined using the vector of observations $y$ and the estimated strength vector $\est z$ as
\[\text{MSE} = \frac{1}{T+1}\sum_{t\in\calT}\sum_{\set{i,j}\in\calE_t} \left(y_{ij}(t)-(\est z_{t,i}-\est z_{t,j})\right)^2. \]
The tuning parameters $\lambda, \tau$ will be chosen using the cross validation procedure described below.
\paragraph{Cross Validation Procedure.} 
\begin{enumerate}
    \item Fix a list of possible values of $\lambda$ (resp. $\tau$).
    \item For every possibles value of $\lambda$ (resp. $\tau$), repeat several times the steps below.
    \begin{itemize}
        \item For each time $t$, select randomly a measurement $y_{ij}(t)$ at time $t$. Let us denote $(i_t,j_t)$ the selected pair of items at time $t$. We denote $Y_{\text{test}} = \set{y_{i_t,j_t}(t) \, | \, t\in \calT}$ to be the set containing this data and $\calI_{\text{test}} = \set{(t,i_t,j_t) \, | \, t\in \calT}$ to be the set of corresponding indices.
        
        \item Consider the data set where the data in $Y_{\text{test}}$ have been removed. Compute the estimator $\est z$ on this smaller data set.
        
        \item Compute the prediction error for both performance criteria
        \begin{itemize}
            \item MSE: $\displaystyle\frac{1}{T+1}\sum_{(t,i_t,j_t)\in \calI_{\text{test}}}(y_{i_t,j_t} - (\est z_{i_t}(t) - \est z_{j_t}(t)))^2$.
            
            \item Mean number of upsets : $\displaystyle\frac{1}{T+1}\sum_{(t,i_t,j_t)\in \calI_{\text{test}}}\ones_{sign(y_{i_tj_t}(t)) \neq sign(\est z_{t,i_t} - \est z_{t,j_t})}$.
        \end{itemize}
    \end{itemize}
    We compute the mean of those errors for each value of the parameter $\lambda$ (resp. $\tau$).
    \item Select $\lambda^\star$ (resp. $\tau^\star$) which minimizes the mean prediction error.
    \item Proceed to the estimation with the chosen parameter $\lambda^\star$ (resp. $\tau^\star)$.
\end{enumerate}

\paragraph{Netflix Prize data set.} Netflix has provided a data set containing anonymous customer's ratings of 17770 movies between November 1999 and December 2005. These ratings are on a scale from 1 to 5 stars. From those individual rankings, we need to form pairwise information that satisfies the Dynamic Transync model \eqref{eq:transync_model}. Denoting $s_{i}(t)$ to be the mean score of movie $i$ at time $t$, computed as the mean rating given to this movie at time $t$ among the customers, we then define for all pair of movies $\set{i,j}$ rated at time $t$
\begin{equation}\label{eq:data_netflix}
    y_{ij}(t) = s_i(t)-s_j(t).
\end{equation}
For computational reasons, we choose a subset of 100 movies to rank. We can then use our estimation methods to recover each movie's quality (and then their rank) at any month between November 1999 and December 2005. In order to denoise the observations, we gather the data corresponding to successive months such that all the graphs of merged observations are connected. The merged dataset is then composed of $T = 23$ observation graphs. The associated observations $y_{ij}(t)$ are computed in a similar manner as \eqref{eq:data_netflix}, using the mean score of the movies across the corresponding merged time points and the customers.
%
% \begin{figure}[h!]
% \centering
% \begin{subfigure}{.5\textwidth}
%   \centering
%   \includegraphics[width=\linewidth]{CV_error_dls_netflix.png}
%   \caption{DLS method}
%   %\label{fig:plot_btl_lipschitz}
% \end{subfigure}%
% \begin{subfigure}{.5\textwidth}
%   \centering
%   \includegraphics[width=\linewidth]{CV_error_dproj_netflix.png}
%   \caption{DProj method}
%   %\label{fig:transync_alpha_ST_05}
% \end{subfigure}%
% \caption{Evolution of prediction errors with the parameter $\lambda$ or $\tau$ in the cross validation step for Dynamic Least-Squares and Dynamic Projection method for the Netflix dataset. The results are averaged over the grid $\calT$ as well as $40$ Monte Carlo runs.}
% \label{fig:cv_error_netflix}
% \end{figure}
%
%
%
\begin{table}
\begin{subtable}[c]{0.5\textwidth}
\centering
\begin{tabular}{c|c|c}
 & MSE & Upsets \\
\hline
$\lambda^*$ & 131.48 & 157.89\\
$\tau^* $ & 0.42 & 0.37
\end{tabular}
\subcaption{Cross-validation results}
\label{tab:cv_results_netflix}
\end{subtable}\hspace{1cm}
\begin{subtable}[c]{0.5\textwidth}
\centering
\begin{tabular}{c|c|c}
 & MSE & Upsets \\
\hline
LS & 3573 & 0.4954\\
DLS & 3040 & 0.4984 \\
DProj & 3025 & 0.4976
\end{tabular}
\subcaption{Performance of the three estimation methods}
\label{tab:results_netflix}
\end{subtable}
\caption{Cross-validation and performance for the chosen parameters for the Netflix dataset. We use the MSE and the mean number of Upsets as our performance criteria.}
\end{table}
For this dataset of merged movies, the cross validation step gives two different values of parameters $(\lambda^*,\tau^*)$ for the two different performance criteria we consider, presented in Table \ref{tab:cv_results_netflix}. Using $\lambda^*, \tau^*$, we then perform the estimation using the naive Least-Squares (LS) approach as well as our two methods, namely DLS and DProj, and compute in both cases the MSE and the mean number of upsets. The results are presented in Table \ref{tab:results_netflix}. We note that the mean number of upsets is essentially similar for all the methods, indicating that none of the methods is better than the other for this criteria. However, DLS and DProj improve the performance of the estimation in terms of the MSE criterion.

\paragraph{English Premier League dataset.} This dataset is composed of all the game results from the English Premier League from season 2000-2001 to season 2017-2018. These 18 seasons involve $n=43$ teams in total, each season seeing $20$ teams confront each other across $38$ rounds. The observations are the mean scores of the games between a pair of teams within the same season. Similar to the Netflix dataset, we can group the game results from successive seasons, resulting in $T = 9$ observation graphs. Let us denote $\calT_k$ to be the set of time-points gathered to form the graph $G_k$. For each $G_k$, the corresponding observations are defined for each pair of teams $\set{i,j} \in \calE_k$ as
\[y_{ij}(k) = \frac{1}{|\calT_k|}\sum_{t\in\calT_k} (s_i(t)-s_j(t)),\]
where $s_i(t)$ denotes the mean number of goals scored by team $i$ against team $j$ during the season $t$. In this case, merging the data does not lead to individual connectivity of the graphs, because of the promotion and relegation of teams at the end of each season. However, the union of all these graphs is connected. As for the Netflix dataset, we perform a cross-validation step to choose the best parameters $(\lambda^*,\tau^*)$, in regards to the performance criterion we consider (MSE or Mean number of upsets). The results of the cross validation are presented in Table \ref{tab:cv_results_epl}. Then, we perform our estimation for these chosen values of the parameters for DLS and DProj. Note that as the individual graphs are not connected, the naive LS method will not give interpretable results,and is only included here for comparison with our methods. As shown in Table \ref{tab:results_epl}, DLS and DProj perform better than LS both in terms of MSE or in Mean number of upsets. Specifically, the number of upsets is improved by 10$\%$ with our methods.

% \begin{figure}[h!]
% \centering
% \begin{subfigure}{.5\textwidth}
%   \centering
%   \includegraphics[width=\linewidth]{CV_error_dls_epl.png}
%   \caption{DLS method}
%   %\label{fig:plot_btl_lipschitz}
% \end{subfigure}%
% \begin{subfigure}{.5\textwidth}
%   \centering
%   \includegraphics[width=\linewidth]{CV_error_dproj_epl.png}
%   \caption{DProj method}
%   %\label{fig:transync_alpha_ST_05}
% \end{subfigure}%
% \caption{Evolution of prediction errors with the parameter $\lambda$ or $\tau$ in the cross validation step for Dynamic Least-Squares and Dynamic Projection method for the Premier League dataset. The results are averaged over the grid $\calT$ as well as $40$ Monte Carlo runs.}
% \label{fig:cv_error_epl}
% \end{figure}

\begin{table}
\begin{subtable}[c]{0.5\textwidth}
\centering
\begin{tabular}{c|c|c}
& MSE & Upsets \\
\hline
$\lambda^*$ & 24.49 & 32.65\\
$\tau^* $ & 6.43 & 5.57
\end{tabular}
\subcaption{Cross-validation results}
\label{tab:cv_results_epl}
\end{subtable}\hspace{1cm}
\begin{subtable}[c]{0.5\textwidth}
\centering
\begin{tabular}{c|c|c}
 & MSE & Upsets \\
\hline
LS & 0.0024 & 0.67\\
DLS & 0.0015 & 0.57 \\
DProj & 0.0014 & 0.58
\end{tabular}
\subcaption{Performance of the three estimation methods}
\label{tab:results_epl}
\end{subtable}
\caption{Cross-validation and performances for those chosen parameters for the Premier League dataset. We use the MSE and the mean number of Upsets as our performance criteria. Performance results are presented for the simple LS, DLS and DProj methods.}
\end{table}

\rev{\paragraph{Smoothness of the real data sets.} Our methods rely on the underlying smoothness of the data. However, with real data sets, the ground truth vector $z^*$ is unknown, thus making it difficult to check whether this assumption is satisfied. In order to verify that our data sets are  fit for our methods, one can define, as a proxy, an ``empirical ground-truth'' vector 
\[
z^{*,emp}_{t,i} := \frac{1}{\abs{\calN_{t,i}}} \sum_{j \in \calN_{t,i}} y_{ij}(t),
\]
where $\calN_{t,i}$ denotes the set of neighbors of node $i$ in the graph $G_t$. For some items that have been compared at all times, we plot the evolution of $z^{*,emp}_{t,i}$ in Figure \ref{fig:smoothness_real_data}. This figure shows that despite some jumps, the overall evolution of $z^{*,emp}_{t,i}$ is reasonably smooth.}
\begin{figure}[h!]
    \centering
    \begin{subfigure}{.5\textwidth}
        \centering
        \includegraphics[width=\linewidth]{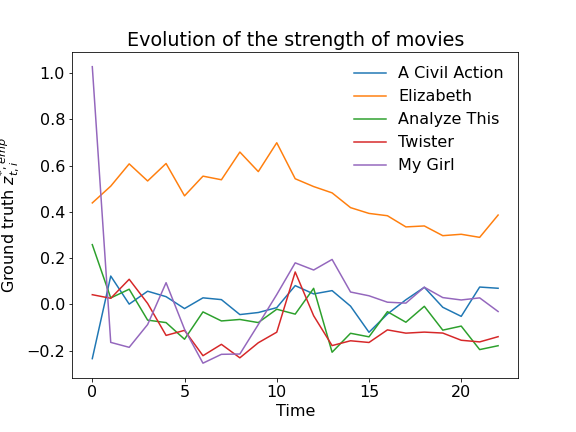}
    \end{subfigure}%
    \begin{subfigure}{.5\textwidth}
        \centering
        \includegraphics[width=\linewidth]{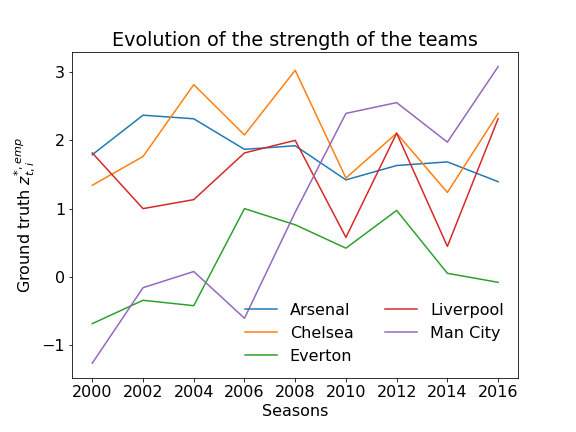}
    \end{subfigure}
    \caption{Evolution of the strengths for real data sets }
    \label{fig:smoothness_real_data}
\end{figure}

% Conclusion
\section{Concluding remarks}\label{sec:conclusion}
In this paper we considered the problem of estimating the latent strengths of a set of $n$ items from noisy pairwise measurements in a dynamic setting. In particular, we proposed a dynamic version of the TranSync model \cite{huang2017translation} by placing a global smoothness assumption on the evolution of the latent strengths. We proposed and analyzed two estimators for this problem and obtained $\ell_2$ estimation error rates for the same. Experiment results on both synthetic data and real data sets were presented.

Some interesting directions for future work are as follows.
\begin{enumerate}
    \item \textit{Disconnected graphs.} As previously discussed in Remarks \ref{rem:connectedness_assum2} and \ref{rem:connectedness_assum}, we believe that our analysis can be extended to the case where some of the individual graphs are disconnected. It will be interesting to analyze this setup in detail with corresponding error bounds.
    
    \item \textit{Lower bound.} Our analysis focuses on upper bounds on the estimation error, however it will be interesting to derive lower bounds for the dynamic TranSync model which showcase the optimal dependence on $n, T$ and $S_T$.  
    
    \item \textit{Dynamic BTL model.} Finally, an interesting direction would be to theoretically analyze the performance of the proposed estimators for the dynamic BTL model (recall Remark \ref{rem:btl_setup}). The main difficulty in this regard is that the noise in the measurements is not zero-mean anymore, and it is not easy to see how such a noise term can be handled to give meaningful error bounds.

\end{enumerate}

% One future direction is the extension of our results to the case where some of the comparison graphs are disconnected. This would further the applicability of the hereby proposed methods. As we mentioned in Remark \ref{rem:connectedness_assum}, this presents technical challenges involving the precise control of the smallest eigenvalues for the matrix $D_\beta+\lambda MM^T$, which is a diagonal (different from the identity) perturbation of a Laplacian matrix for which general purpose eigenvalue perturbation techniques do not give accurate enough bounds.

\clearpage
\bibliographystyle{abbrvnat}
\bibliography{references}

\newpage
% Appendix
\appendix
%\section{Proofs from Section }
\section{Proof of Lemma \ref{lem:rank_Laplacian_reg}} \label{appsubsec:rank_Laplacian_reg}
\begin{proof}
 Recall that $(\lambda_i,v_i)_{i=1}^n$ denote the eigenpairs of $\complincmat\complincmat^\top$ with $\lambda_1 \geq \cdots \geq \lambda_{n-1} > \lambda_n$ its eigenvalues. Since $\complincmat\complincmat^\top = nI - \ones\ones^\top$ we know that $v_n = \ones_n$ and $\set{v_i}_{i=1}^{n-1}$ is any orthonormal basis for the space orthogonal to span$(\ones_n)$. Also recall that $(\mu_k,u_k)_{k=0}^{T}$ denote the eigenpairs of $M M^\top$ where $u_{T} = \ones_{T+1}$. Now the eigenvectors of $E^\top E$ that have eigenvalue zero are 
 \begin{itemize}
     \item $u_T \otimes v_j = \ones_{T+1} \otimes v_j$ for $j=1,\dots,n-1$, and
     
     \item $u_k \otimes v_n = u_k \otimes \ones_n$ for $k=0,\dots,T$.
 \end{itemize}

 Note that $\set{u_k \otimes \ones_n}_{k=0}^{T}$ lie in $\calN(L+\lambda E^\top E)$ and
 \begin{equation*}
  \text{span}\set{u_k \otimes \ones_n}_{k=0}^{T} = \text{span}\set{e_k \otimes \ones_n}_{k=0}^{T}.
 \end{equation*}
 Since $L+\lambda E^\top E$ is p.s.d, we have that $\calN(L+\lambda E^\top E) = $ \text{span}$\set{e_k \otimes \ones_n}_{k=0}^{T}$ iff 
 \begin{equation} \label{eq:temp1}
     x^\top(L_{\tilde V}+\lambda E^\top E)x > 0, \quad \forall x \ ( \neq 0) \in  \text{span}^{\perp}\set{e_k \otimes \ones_n}_{k=0}^{T}.
 \end{equation}
 As the orthogonal complement of $\text{span}\set{e_k \otimes \ones_n}_{k=0}^{T}$ is given by 
 \begin{equation*}
\text{span}^{\perp}\set{e_k \otimes \ones_n}_{k=0}^{T} = \text{span}\set{\ones_{T+1} \otimes v_j}_{j=1}^{n-1} \oplus \calN^{\perp}(E^\top E) 
 \end{equation*}
we claim that \eqref{eq:temp1} translates to establishing that
 \begin{equation} \label{eq:temp2} 
    % \underbrace{x^\top(L_{\tilde V}+\lambda E^\top E)x}_{= x^\top L_{\tilde V} x} > 0, \quad \forall x \ (\neq 0) \in  \text{span}\set{\ones_{T+1} \otimes v_j}_{j=1}^{n-1}.
      x^\top L x > 0, \quad \forall x \ (\neq 0) \in  \text{span}\set{\ones_{T+1} \otimes v_j}_{j=1}^{n-1}.
 \end{equation}
To prove this claim, we begin by writing any $x \in \text{span}^{\perp}\set{e_k \otimes \ones_n}_{k=0}^{T}$ as $x = \tilde x + x'$ where $\tilde x \in \text{span}\set{\ones_{T+1} \otimes v_j}_{j=1}^{n-1}$ and $x' \in \calN^{\perp}(E^\top E)$. Then if $x \neq 0$, 
\begin{align*}
%x^\top(L_{\tilde V}+\lambda E^\top E)x = \tilde{x}^\top L_{\tilde V} \tilde{x} + x'^\top L_{\tilde V} x' + 2\tilde{x}^\top L_{\tilde V} x' + \lambda x'^\top E^\top E x' > 0
x^\top(L+\lambda E^\top E)x 
&= x^\top Lx + \lambda x'^\top E^\top E x' \\ 
&= \left\{
\begin{array}{rl}
\tilde x^\top L \tilde x  \ ; & \text{ if } x' = 0, \\
> 0 \ (\text{since } \lambda > 0)\ ; & \text{ if } x' \neq 0,
\end{array} \right.
\end{align*}
which establishes the claim since at least one of $\tilde x, x' \neq 0$.

To prove \eqref{eq:temp2},  we first observe that $x^\top (L+\lambda E^\top E) x = x^\top L x$, for any $x \in \text{span}\set{\ones_{T+1} \otimes v_j}_{j=1}^{n-1}$. Since $v_1,\dots,v_{n-1}$ is any orthonormal basis for the subspace orthgonal to $\ones_n$, we set $x = \ones_{T+1} \otimes v$ for any $v (\neq 0)$ lying in that subspace. This gives us
\begin{equation*}
    x^\top L x = \sum_{t \in \calT} v^\top L_t v = v^\top \left(\sum_{t \in \calT} L_t \right) v.
\end{equation*}
Now, $\sum_{t \in \calT} L_t$ is the Laplacian of the union graph $G_U$. Since $\sum_{t \in \calT} L_t$ has rank $n-1$ iff $G_U$ is connected (which is true by assumption), we arrive at \eqref{eq:temp2}. 
\end{proof}

%\subsection{Proof of Lemma \ref{lem:var_term}}
\section{Useful technical results}
\rev{To make our paper self-contained, we recall below two useful and standard results regarding the L\"owner ordering. The proof of the next lemma has been adapted from \cite{mathstack1}.}
\rev{
\begin{lemma}\label{lem:loewner_inverse}
Let $A,B$ be two $N \times N$ symmetric positive definite matrices, such that $B\succcurlyeq A$. Then, it holds $A^{-1}\succcurlyeq B^{-1}$. 
\end{lemma}
}
\begin{proof}
\rev{
    Given that $B\succcurlyeq A$, it is easy to see that $A^{-1/2}(B-A)A^{-1/2}\succcurlyeq 0$, which is equivalent to $A^{-1/2}BA^{-1/2}\succcurlyeq I$. Assuming $A$ and $B$ are $N\times N$ matrices, let us write the spectral expansion 
    \[A^{-1/2}BA^{-1/2}=\sum^N_{i=1}\mu_iu_iu_i^\top.\]  
    Noticing that $I=\sum^N_{i=1}u_iu_i^\top$, and given the fact $A^{-1/2}BA^{-1/2}\succcurlyeq I$ it is clear that $\mu_i>1$ for all $i$, which implies that \[I=\sum^N_{i=1}u_iu_i^\top\succcurlyeq \sum^N_{i=1}\mu^{-1}_iu_iu_i^\top= A^{1/2}B^{-1}A^{1/2}.\]
    On other hand, using this Loewner inequality, we deduce that \begin{align*}
        B^{-1}=A^{-1/2}(A^{1/2}B^{-1}A^{1/2})A^{-1/2} 
        \preccurlyeq A^{-1/2}IA^{-1/2}
        =A^{-1}
    \end{align*}
    }
\end{proof}
\rev{The proof of the following lemma is outlined in \cite{mathstack2}.
\begin{lemma}\label{lem:loewner_pseudo}
    Let $A,B$ be two $N \times N$ symmetric positive semidefinite matrices satisfying $B\succcurlyeq A$ and $\calN(A)=\calN(B)$. Then it holds $A^\dagger \succcurlyeq B^\dagger$.
\end{lemma}
}
\begin{proof}
\rev{
If $A,B$ are invertible, the result follows direclty from Lemma \ref{lem:loewner_inverse}. If they are not, assume that their nullspace $\calN(A)$ has dimension $d \geq 1$ and their range has dimension $N-d$. Let $\{v_1,\cdots,v_d\}$ be any orthonormal basis of $\calN(A)$. It is easy to see that $A+\sum^d_{i=1}v_iv^T_i$ is invertible, and the same is true for $B+\sum^d_{i=1}v_iv^T_i$. Given $B+\sum^d_{i=1}v_iv^T_i \succcurlyeq A+\sum^d_{i=1}v_iv^T_i$, we use Lemma \ref{lem:loewner_inverse}, to deduce 
\begin{align*}
A^{\dagger}+\sum^d_{i=1}v_iv^T_i = (A+\sum^d_{i=1}v_iv^T_i)^{-1}
    \succcurlyeq (B+\sum^d_{i=1}v_iv^T_i)^{-1} 
    = B^\dagger+\sum^d_{i=1}v_iv^T_i,
\end{align*}
where for the first and last equality we used the orthogonality of the range of $A$ ($B$ has the same range) with respect to the nullspace. From this, the result follows. 
}
\end{proof}

\section{\rev{Additional simulations}}
\paragraph{Eigenvalues of $L(\lambda)$.} \rev{The theoretical analysis of the smoothness penalized estimator relies on the knowledge of the eigenvalues of $L(\lambda)$ and how they differ from the eigenvalues of $L$. We show in Figure \ref{fig:eigs} the variations of the eigenvalues of these matrices for different choices of $\lambda$. We verify experimentally that the addition of a penalty term increases part of the spectrum, which is at the base of our theoretical analysis.
}
\begin{figure}[h!]
    \centering
    \begin{subfigure}{.5\textwidth}
        \centering
        \includegraphics[width=\linewidth]{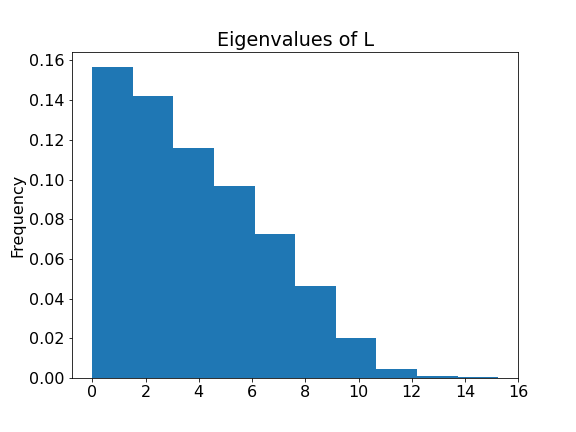}
        \label{fig:eigs_L}
        \caption{Eigenvalues of $L$}
    \end{subfigure}%
    \begin{subfigure}{.5\textwidth}
        \centering
        \includegraphics[width=\linewidth]{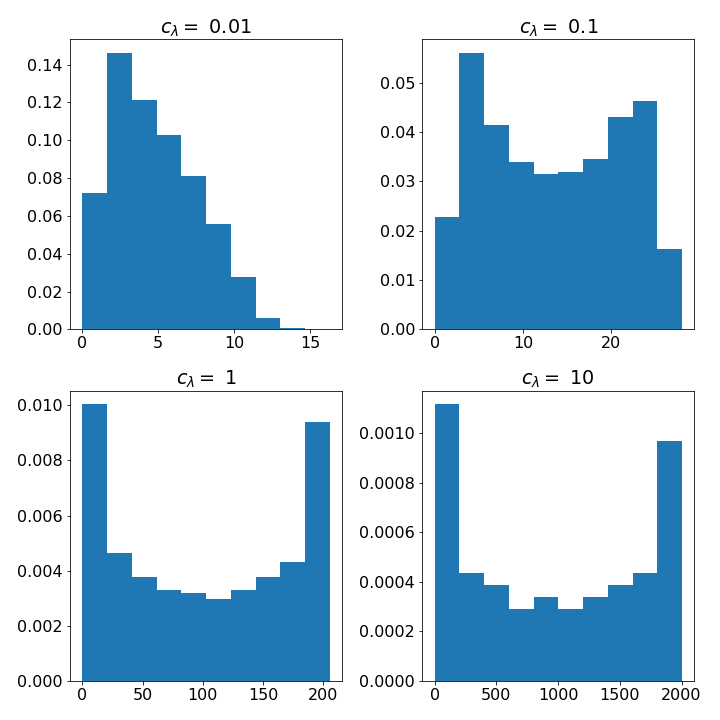}
        \label{fig:eigs_L_lam}
        \caption{Eigenvalues of $L(\lambda)$}
    \end{subfigure}
    \caption{\rev{Histogram of eigenvalues of $L$ and $L(\lambda)$ for different values of $\lambda = c_{\lambda}\left(\frac{T}{S_T}\right)^{2/3}$, for $T = 200$ and $n = 100$. The addition of the penalty term increases the eigenvalues.}}
    \label{fig:eigs}
\end{figure}

\paragraph{Simulation for $n \gg T$.} \rev{Figure \ref{fig:plots_transync_N200} shows the evolution of the MSE for $n = 200$ and $T$ varying between $10$ and $50$. We observe that even in the case $n \gg T$, the estimation error goes to 0 as $T$ increases. However, the errors are slightly bigger than in the case $n \ll T$, which is consistent with the theoretical error bounds we presented in this paper.}

\begin{figure}[h!]
\centering
\begin{subfigure}{.5\textwidth}
  \centering
  \includegraphics[width=\linewidth]{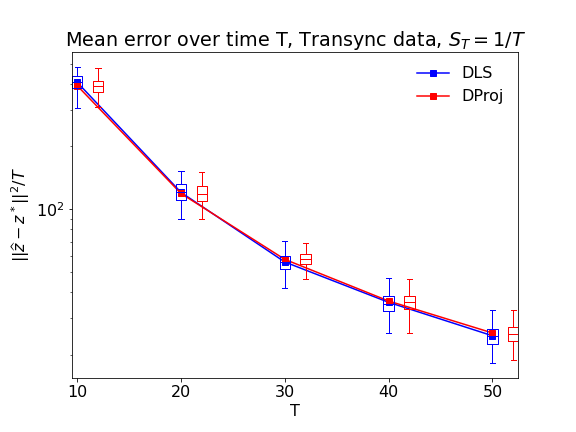}
  \caption{Smoothness $S_T = \frac{1}{T}$}
  \label{fig:transync_alpha_ST_1_N200}
\end{subfigure}%
\begin{subfigure}{.5\textwidth}
  \centering
  \includegraphics[width=\linewidth]{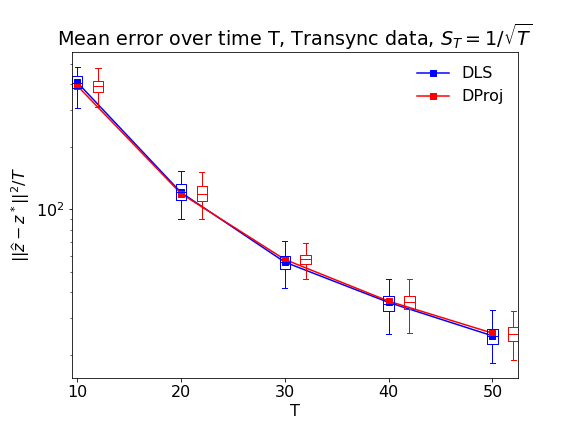}
  \caption{Smoothness $S_T = \frac{1}{\sqrt{T}}$}
  \label{fig:transync_alpha_ST_05_N200}
\end{subfigure}%
\hfill
% \begin{subfigure}{.5\textwidth}
%   \centering
%   \includegraphics[width=\linewidth]{log_MSE_transync_alpha_ST_m05_N200.png}
%   \caption{Smoothness $S_T = \sqrt{T}$}
%   \label{fig:transync_alpha_ST_m05_N200}
% \end{subfigure}%
% \begin{subfigure}{.5\textwidth}
%   \centering
%   \includegraphics[width=\linewidth]{log_MSE_transync_alpha_ST_05_N200.png}
%   \caption{Smoothness $S_T = 1$}
%   \label{fig:transync_alpha_ST_0_N200}
% \end{subfigure}%
\caption{\rev{MSE versus $T$ for DLS and DProj for $n \gg T$. Here $n = 200$, $T$ goes from $10$ to $50$ and data is generated according to the Dynamic TranSync model. Graphs are generated as $G(n,p(t))$ with $p(t)$ chosen randomly between $\frac{1}{n}$ and $\frac{\log(n)}{n}$. The results are averaged over the grid $\calT$ as well as $20$ Monte Carlo runs.}}
\label{fig:plots_transync_N200}
\end{figure}

\end{document}